\documentclass[a4paper]{amsart}

\usepackage{amssymb} 
\usepackage{graphicx} 
\usepackage{mathtools}
\usepackage{graphicx}
\usepackage{mathrsfs}
\usepackage{amsfonts}
\usepackage{enumerate}
\usepackage{latexsym, euscript, epic, eepic, color, hyperref, tikz}
\usepackage{enumitem}
\usepackage{subcaption} 
\usepackage{float} 
\usepackage{pgfplots}
\usepgfplotslibrary{fillbetween} 

\usetikzlibrary{arrows.meta}

\def\brackets#1{\left( #1 \right)}

\def\bigbrackets#1{\left\{ #1 \right\}}

\newcommand{\NN}{\mathbb{N}}
\newcommand{\RR}{\mathbb{R}}

\newcommand{\eps}{\varepsilon}
\newcommand{\mathdot}{~:~}

\newcommand{\lbd}{\underline{\dim}_{\mathrm{B}}}

\newcommand{\bd}{\dim_{\mathrm{B}}}

\newcommand{\asd}{\dim_{\mathrm{A}}}

\newcommand{\lasp}{\dim_{\mathrm{L}}^\theta}

\newcommand{\qlad}{\dim_\mathrm{qL}}

\newtheorem{lemma}{Lemma}
\newtheorem{theorem}{Theorem}

\newtheorem{proposition}{Proposition}
\newtheorem{example}{Example}
\newtheorem{corollary}{Corollary}
\newtheorem{claim}{Claim}

\numberwithin{equation}{section}

\newcommand{\lad}{\dim_{\rm L}}
\newcommand{\lqad}{\underline{\dim}_{\rm L}}

\newcommand{\lbxd}{\underline{\dim}_{\rm B}}

\title{Generalised lower Assouad-type dimensions and their interpolations}

\author{Haipeng Chen}
\address{School of Artificial Intelligence, Shenzhen Technology University, Shenzhen, China, 518118}
\email{hpchen0703@foxmail.com}
\author{Wen Wang\textsuperscript{*}}
\address{School of Mathematics and Statistics, Yunnan University, Kunming, China, 650504}
\email{sophia\_84@126.com}
\thanks{\textsuperscript{*}:Corresponding author.}

\keywords{Generalised Lower Assouad dimension, Lower Assouad spectra, Lower Assouad dimension.}
\date{\today}
\subjclass[2020]{28A80 (Primary), 28A78 (Secondary)}

\begin{document}

\maketitle

\begin{abstract}
This paper investigates the analytic and structural properties of the $\phi$-lower Assouad dimension, a generalized notion extending the lower Assouad dimension. We establish the equivalence of $\phi$-lower Assouad dimensions with respect to the dimension functions, prove analytic properties related to the regularity of the $\phi$-lower dimension, and analyse the role of rate windows in this context. Furthermore, we explore both positive and negative interpolation properties of the $\phi$-lower dimension by presenting corresponding theorems that delineate these behaviors.
\end{abstract}

\tableofcontents

\section{Introduction}

\subsection{History of lower Assouad type dimensions}

A fundamental research direction in fractal geometry and geometric measure theory is to elucidate the fine-scale structure of sets with irregular geometry. The lower Assouad dimension is an important quantity in the dimension theory literature and related fields. It was first introduced as the {\it minimal dimensional number} by Larman in 1967 \cite{L1967}. In recent decades, the lower Assouad dimension has emerged as the natural dual to the well-studied Assouad dimension and serves as the minimal Hausdorff dimension of limit sets obtained by ‘zooming in’ on compact sets. The lower Assouad dimension and its variants have received widespread attention in the dimension theory of fractal geometry. We refer the reader to \cite{JF2020,FHKY2019} for a general introduction and further details.

We work with the non-empty, bounded doubling metric space $(X,d)$, which is simplified as {\it n.b.d. space $X$} for the rest of this paper. For any $F \subseteq X$, we denote by $N_r(F)$ the smallest number of balls of radius needed to cover $F$. The lower Assouad dimension is defined as 
\begin{align*}
\dim_{\rm L} F & = \sup \{ s \geq 0 ~:~ \text{ there exist constant } C > 0 \text{ such that }\\
& \hspace{1 cm} \text{ for all } 0 < r < R \text{ and } x \in F, ~N_r(B(x,R) \cap F) \geq C \left( \frac{R}{r}\right)^s \}.
\end{align*}

It follows from the definition that the lower Assouad dimension measures the sparsest (thinnest) local scaling property, capturing minimal covering complexity within a set. For well-behaved sets, such as Ahlfors-David regular sets $F$, then $\lad F = \bd F = \asd F$ where these denote the lower Assouad, box-counting, and Assouad dimensions, respectively. However, certain irregular behaviors can arise specifically for the lower Assouad dimension: for instance, the lower Assouad dimension of any set containing isolated points is always zero. Moreover, there exist open set $U = \cup_{n=1}^\infty  (1/n- 1/2^{n+1}, 1/n + 1/2^{n+1})$ such that $\lad U = 0$. Details see Fraser \cite{JF2020}.

It is instructive to compare the lower Assouad dimension with other classical fractal dimensions. In general, the lower Assouad dimension is the smallest among standard notions of dimension. For example, for all $F \subseteq X$, 
\[
\dim_{\rm L} F \leq \underline{\dim}_{\rm B} F \leq 
\overline{\dim}_{\rm B} F\]
and for all compact sets, 
\[
\dim_{\rm L} F \leq \dim_{\rm H} F.
\]
Nevertheless, for certain non-compact sets, like $[0,1] \cap \mathbb{Q}$, the lower Assouad dimension can actually exceed the Hausdorff dimension. Understanding how such coarse scaling arises and how different notions of dimensions have played an important role in the development of fractal geometry and related fields.

\subsection{Generalised lower dimensions and the rate windows.}

In this paper, we investigate the fine scaling properties of the lower Assouad dimension, both in a general framework and for several notable explicit families of sets. A particularly prominent example in this context is the lower Assouad spectrum, which serves as the natural dual to the Assouad spectrum; both notions were introduced by Fraser and Yu in \cite{FY2018}. The lower Assouad spectrum is a variation on the lower Assouad dimension, obtained by restricting the scaling relationship to $r =  R^{1/\theta}$ for some fixed $\theta \in (0,1)$ in the lower Assouad dimension. This notion has also contributed to the broader program of dimension interpolation. For more information on the dimension interpolation, see Fraser \cite{JF2020} and references therein.

We denote the lower Assouad spectrum by $\lasp F$, and the quasi-lower Assouad spectrum by $\underline{\dim}_{\rm L}^\theta F$ repsectively. It is a continuous parameterised family of dimensions with 
\[
\lad F \leq \underline{\dim}_{\rm L}^\theta F \leq \dim_{\rm L}^\theta F \leq \lbxd F.
\]
The robust theory of the lower Assouad spectrum has attracted considerable attention in recent years due to its unique scaling behaviors. Notably, the lower Assouad spectrum exhibits different phenomena from the classical Assouad spectrum. While the Assouad spectrum interpolates between the box dimension and the quasi-Assouad dimension, the lower Assouad spectrum does not necessarily follow this pattern. Instead, as shown by Chen et al.\cite{CDW2017, CWC2020}, both $\lim\limits_{\theta \to 0} \dim_{\rm L}^\theta F$ and $\lim\limits_{\theta \to 1} \dim_{\rm L}^\theta F$ exist, and the lower Assouad spectrum interpolates between the {\it quasi-lower Assouad dimension} and the lower box dimension with 
\[
\dim_{\rm L} F \leq \dim_{\rm qL} F = \lim\limits_{\theta \to 1} \dim_{\rm L}^\theta F \leq \lim\limits_{\theta \to 0} \dim_{\rm L}^\theta F \leq \underline{\dim}_{\rm B} F,
\]
but $\dim_{\rm qL} F$ may strictly larger than $\dim_{\rm L} F$, and $\lim\limits_{\theta \to 0} \dim_{\rm L}^\theta F$ may strictly smaller than the $\underline{\dim}_{\rm B} F$. For certain fractal sets, such as McMullen sets, it is known that $\lim\limits_{\theta \to 0} \dim_{\rm L}^\theta F$ coincides with the lower box dimension, see \cite{JF2020}. Nevertheless, Chen et al. \cite{CWC2020} demonstrate that this coincidence does not hold in general. The lower Assouad-type dimensions are dedicated to capture the coarsest scaling properties of sets, but the lower Assouad spectrum fails to fully interpolate from lower Assouad dimension to lower box dimension in general.

Responding to this challenge, G\'arcia et al.\cite{GHM2021} introduced variations of the Assouad and lower Assouad dimensions, termed the {\it upper $\phi$-Assouad dimension} and {\it lower $\phi$-Assouad dimension}, respectively. In these definitions, the smaller scale $r$ is prescribed as a function of the larger scale 
$R$, as suggested by Fraser and Yu \cite{FY2018}. Garcia et al.\cite{GHM2021} investigated the general properties of these dimensions and established results for several appropriately chosen functions. For further developments and related works on variations of the Assouad dimension, we refer readers to \cite{GHM2021}.

In contrast to G\'arcia et al.\cite{GHM2021}., Banaji et al.\cite{BRT2023} considered a different variation of the Assouad dimension by restricting $r$ via a general function of $R$, termed the {\it $\phi$-Assouad dimension}. They demonstrated that  the upper $\phi$-Assouad dimension can be derived from $\phi$-Assouad dimension using variational principle, and fully interpolates between the upper box dimension and the Assouad dimension. For a detailed discussion of the general properties of the $\phi$-Assouad dimension and its values for specific fractal sets, see \cite{BRT2023}. For further developments and related works on variations of the Assouad dimension, we refer readers to \cite{GHM2021}.

To capture the effect of lower Assouad-like dimension under the restriction that $r$ is constrained by a general dimension function of $R$ and to further examine whether, in the sense of  functions, the lower Assouad-like dimension fully interpolates between lower Assouad dimension and lower box dimension, we introduce in this paper a variant of the lower Assouad dimension by prescribing the smaller scale $r$ as a function of the larger scale $R$. We systematically investigate the fundamental properties of this new dimension, incluing the equivalence, regularity and interpolation properties. 

Let $\phi : R \in (0,1) \to \mathbb{R}^+$ be a real function, we call $\phi(R)$ be a {\it dimension function} if it satisfies:
\begin{enumerate}
\item $\phi(R)$ decreases as $R \to 0$;
\item $R \to \phi(R) \log(1/R)$ increases to infinity as $R$ decreases to $0$.
\end{enumerate}
Throughout this paper, we focus on non-empty, bounded, doubling metric spaces, abbreviated as {\it n.b.d.\ spaces}. Given n.b.d. space $F$, we call the {\it $\phi$-lower Assouad dimension}, simplified as {\it $\phi$-lower dimension}, as follows:
\begin{align*}
\lad^\phi F = \sup\{s \geq 0 \mathdot & \exists \,  C > 0, \text{ such that }  \forall \,  0 < r = R^{1+ \phi(R)} < R < 1, \\
& \inf_{x \in F} N_r(B(x,R) \cap F)  \geq \, C  R^{-\phi(R) s} \}.
\end{align*}
The {\it quasi $\phi$-lower Assouad dimension}, simplied as {\it quasi $\phi$-lower dimension} and called lower $\phi$-Assouad dimension in \cite{GHM2021}, is defined as follows:
\begin{align*}
\underline{\dim}_{\rm L}^\phi F = \sup\{s \geq 0 \mathdot & \exists \,  C > 0, \text{ such that }  \forall \,  0 < r \leq R^{1+ \phi(R)} < R < 1, \\
& \inf_{x \in F} N_r(B(x,R) \cap F)  \geq \, C  (R/r)^{s} \}.
\end{align*}
It directly follows that 
\[
\lad F \leq \underline{\dim}_{\rm L}^\phi F \leq \lad^\phi F \leq \lbd F.
\]

Additionally, to capture the finer scaling properties of $\phi$-lower dimension, given dimension function $\phi$, for any $ \alpha \in (0,\infty)$, we define $\phi_\alpha : R \mapsto \phi(R)/\alpha$ and denote by the corresponding {\it $\phi_\alpha$-lower dimension} $\lad^{\phi_\alpha} F $. We also call $\mathcal{W}_\phi$ the {\it dimension rate windows of $\phi(R)$} as 
\[
\mathcal{W}_{\phi} = \{\phi_\alpha ~:~ \alpha \in (0,\infty) \}.
\]

Notice that for sets containing isolated points or some open sets, for any dimension function $\phi$, their $\phi$-lower dimension are always 0. To address this limitation and avoid certain pathological behaviors, we also introduce the {\it $\phi$-modified lower Assouad dimension}, simplified as {\it $\phi$-modified lower dimension}, as follows:
\[
\dim_{\rm ML}^\phi F = \sup \{ \dim_{\rm L}^\phi E ~:~ E \subseteq F \}.
\]

We summarize the $\phi$-lower dimension, $\phi$-quasi lower dimension, $\phi$-modified lower dimension as the {\it generalised lower dimensions}. The main contributions of this article are summarized as follows:

\begin{enumerate}
\item We study the properties of generalised lower dimensions.
\item We investigate the rate windows and the analytic properties of $\phi$-lower dimension.
\item We discuss the interpolation problem of the generalised lower dimensions from lower dimension and quasi-lower dimension.
\item We study some examples indicating that the generalised lower dimensions fails to fully interpolate from lower dimension to the lower box dimension.
\end{enumerate}

For the rest of this paper, we will introduce our main results more precisely along this themes.

\subsection{Main results and remarks}

This section is devoted to a detailed presentation of our main theorems and their implications. For the properties of generalised lower dimensions, we obtain:

\begin{theorem}\label{THM_equivalence}
Let $\phi(R)$ and $\psi(R)$ be two different dimension functions, then the following conditions are equivlant:
\begin{enumerate}
\item $\lim\limits_{R \to 0} \phi(R)/\psi(R) = 1$.
\item For all bounded $F \subseteq \mathbb{R}$, $\lad^\phi F = \lad^\psi F$.
\item For all n.b.d spaces $F$, $\lad^\phi F = \lad^\psi F$.
\end{enumerate}
\end{theorem}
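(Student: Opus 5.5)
The plan is to prove $(1)\Rightarrow(3)\Rightarrow(2)\Rightarrow(1)$. The implication $(3)\Rightarrow(2)$ is immediate, since a bounded subset of $\mathbb{R}$ with the Euclidean metric is an n.b.d. space. The substance is therefore in $(1)\Rightarrow(3)$, which is a comparison argument, and in $(2)\Rightarrow(1)$, which needs a construction.

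\textbf{$(1)\Rightarrow(3)$.} Fix $R$ and set $r_\phi=R^{1+\phi(R)}$ and $r_\psi=R^{1+\psi(R)}$. Suppose $s<\lad^\phi F$, so that $N_{r_\phi}(B(x,R)\cap F)\ge C R^{-\phi(R)s}$ for all $x\in F$.
\begin{enumerate}
\item If $r_\psi\le r_\phi$, monotonicity of covering numbers gives $N_{r_\psi}\ge N_{r_\phi}$ directly.
\item If $r_\psi>r_\phi$, the doubling property of $X$ gives a constant $K$ with
\[
N_{r_\phi}(E)\le K\,(r_\psi/r_\phi)^{d}\,N_{r_\psi}(E),
\]
where $d$ is the doubling dimension. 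Here $r_\psi/r_\phi=R^{-(\phi(R)-\psi(R))}$.
\end{enumerate}
In both cases we get
\[
N_{r_\psi}(B(x,R)\cap F)\ge C' R^{-\psi(R)s}\,R^{-(\phi(R)-\psi(R))s-|\phi(R)-\psi(R)|d}.
\]
The exponent of $1/R$ in the error factor is bounded by $(s+d)\,|\phi/\psi-1|\,\psi(R)\log(1/R)$. Since $\phi/\psi\to1$, for any $\eps>0$ this is at most $\eps\,\psi(R)\log(1/R)$ once $R$ is small. Hence $N_{r_\psi}\ge C''R^{-\psi(R)(s-\eps')}$ for small $R$, with $\eps'\to0$ as $\eps\to0$. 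For $R$ bounded away from $0$, $\psi(R)\log(1/R)$ stays bounded, so the inequality holds after shrinking the constant; the covering numbers are at least $1$ there. This gives $\lad^\psi F\ge s-\eps'$, and by symmetry $\lad^\psi F=\lad^\phi F$. One caveat: $\psi(R)\log(1/R)\to\infty$ is what absorbs the constants, but the error is relative, so it only needs $\phi/\psi\to1$.

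\textbf{$(2)\Rightarrow(1)$: the main obstacle.} Suppose $\phi/\psi\not\to1$. After possibly swapping the two functions, there are $\delta>0$ and $R_k\to0$, chosen very sparse, with $\phi(R_k)\ge(1+\delta)\psi(R_k)$. The idea is to build a compact $F\subseteq[0,1]$, homogeneous enough that every $x$ sees the same scaling, as a "Moran-type" set. Its covering function $N_r(B(x,R)\cap F)$ behaves like $(R/r)^{1}$ everywhere, except that near each scale $R_k$ there is a window $[R_k^{1+\phi(R_k)},R_k^{1+\psi(R_k)}]$ in which the set has dimension $0$: each piece is replaced by a single tiny subinterval.

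At $R=R_k$ with $r_\phi$, the count then comes only from the part between $R_k$ and $R_k^{1+\psi}$. It is at most about $R_k^{-\psi(R_k)}=R_k^{-\phi(R_k)\cdot\psi/\phi}$, which forces $\lad^\phi F\le 1/(1+\delta)$. At $r_\psi$ the count is the full $R_k^{-\psi(R_k)}$, so the window is invisible to $\psi$.

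The delicate part is verifying $\lad^\psi F=1$ at every other $R$, not just at $R_k$. For a general $R$, the $\psi$-pair $(R,R^{1+\psi(R)})$ could still straddle a window. I would first try to handle this by exploiting the monotonicity of $\phi$ and $\psi$ and the sparseness of $R_k$. If needed, I would instead shrink the windows to a sub-window whose logarithmic length is a fixed fraction $\delta'\,\phi(R_k)\log(1/R_k)$. Any $\psi$-pair then loses at most a fraction strictly less than that, while the $\phi$-pair at $R_k$ loses the whole $\delta'$ fraction. This forces $\lad^\phi F<\lad^\psi F$ whichever way the losses compare, and so contradicts (2). Making this quantitative is the main technical step.
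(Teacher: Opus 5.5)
Your $(1)\Rightarrow(3)$ and $(3)\Rightarrow(2)$ follow the paper; Proposition~2 is the same doubling comparison. One small slip: in your displayed bound the $d$-term should be $R^{+|\phi(R)-\psi(R)|d}\le 1$, a loss, though you only use its magnitude afterwards.

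The gap is in $(2)\Rightarrow(1)$, and it is more than an unfinished estimate: the thin scales are in the wrong place. You make $F$ zero-dimensional on $[R_k^{1+\phi(R_k)},R_k^{1+\psi(R_k)}]$ and assert that $\psi$ does not see this. But $\psi$-pairs based at other radii do see it. For instance, suppose $\phi(R_k)\ge(2+\psi(R_k))\psi(R_k)$ and take $R=R_k^{1+\psi(R_k)}$. Since $\psi(R)\le\psi(R_k)$, the $\psi$-pair at $R$ has log-length $\psi(R)\log(1/R)\le(1+\psi(R_k))\psi(R_k)\log(1/R_k)$. So it lies entirely inside your window, $N_{R^{1+\psi(R)}}(B(x,R)\cap F)=O(1)$, and $\lad^\psi F=0$, not $1$.

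The sub-window fallback rests on the claim that every $\psi$-pair loses a smaller fraction than the $\phi$-pair at $R_k$, and this is backwards. Pairs based below $R_k$ are only guaranteed log-length $\ge\psi(R_k)\log(1/R_k)$. When $\phi,\psi\to0$, this is essentially their length throughout the relevant range. So a $\psi$-pair placed over a sub-window of log-length $\delta'\phi(R_k)\log(1/R_k)$ loses essentially the fraction $\min\{1,\delta'\phi(R_k)/\psi(R_k)\}\ge\min\{1,(1+\delta)\delta'\}>\delta'$. Shorter pairs see a thin block more strongly. Also, ``whichever way the losses compare'' does not prove $\lad^\phi F\ne\lad^\psi F$: without control of $\phi,\psi$ at all radii near $R_k$, nothing in your argument prevents the two worst-case losses from coinciding.

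The missing idea, which is the paper's Example~1, is to put the thin block exactly on the pair of the \emph{smaller} function at $R_k$. In your orientation, $\phi(R_k)\ge(1+\delta)\psi(R_k)$. Take a homogeneous Moran set in $[0,1]$ with ratio $2^{-\alpha}$, $\alpha>1$, at the levels corresponding to scales in $[R_k^{1+\psi(R_k)},R_k]$, and ratio $1/2$ elsewhere, with $R_k$ very sparse. Every local ratio is then at least $1/\alpha$, and the $\psi$-pair at $R_k$ is exactly the block, so $\lad^\psi F=1/\alpha$.

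Now let $\Lambda=\psi(R_k)\log(1/R_k)$ be the block's log-length and let $M$ bound $\phi$ near $0$.
\begin{itemize}
\item A $\phi$-pair based at $R\le R_k$ has log-length $\phi(R)\log(1/R)\ge\phi(R_k)\log(1/R_k)\ge(1+\delta)\Lambda$, because $R\mapsto\phi(R)\log(1/R)$ increases as $R\downarrow0$. So at most a fraction $1/(1+\delta)$ of it is thin.
\item A $\phi$-pair based at $R$ with $\log(1/R)=\log(1/R_k)-a$, $a>0$, contains at least $a$ of full-dimensional scales before the block. Since $\phi(R)\ge\phi(R_k)$, it also has log-length at least $(1+\delta)\Lambda-Ma$. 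With $c=\delta/(2M)$, its thin fraction is at most $1/(1+c)$ if $a\ge c\Lambda$, and at most $1/(1+\delta/2)$ otherwise.
\end{itemize}
Sparseness of $R_k$ prevents a pair from meeting two blocks. Hence $\lad^\phi F>1/\alpha=\lad^\psi F$, which contradicts (2).
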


We specialize the rate windows and the analytic properties of $\phi$-lower dimension as follows.

\begin{theorem}\label{THM_variational_principle}
Let $F$ be a n.b.d. space and let $\phi(R)$ be a dimension function, then 
\[
\underline{\dim}_{\rm L}^\phi F  =  \inf_{0 < \alpha < 1} \dim_{\rm L}^{\phi_\alpha} F 
\] 
\end{theorem}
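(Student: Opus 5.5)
The inequality $\underline{\dim}_{\rm L}^\phi F \le \inf_{0<\alpha<1}\dim_{\rm L}^{\phi_\alpha} F$ should be immediate. For $0<\alpha<1$ we have $r=R^{1+\phi(R)/\alpha}\le R^{1+\phi(R)}$, so this choice of $r$ is admissible in the definition of $\underline{\dim}_{\rm L}^\phi F$. Moreover $R/r = R^{-\phi_\alpha(R)}$. Hence if $s<\underline{\dim}_{\rm L}^\phi F$, the defining inequality $N_r(B(x,R)\cap F)\ge C(R/r)^s$ specialises to $N_r(B(x,R)\cap F)\ge C R^{-\phi_\alpha(R)s}$, which gives $\dim_{\rm L}^{\phi_\alpha}F\ge s$ for every $\alpha$. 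So all the work is in the reverse inequality.

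For the reverse inequality, fix $s<s'<\inf_{0<\alpha<1}\dim_{\rm L}^{\phi_\alpha}F$. Given $R$ and $r\le R^{1+\phi(R)}$, write $r=R^{1+\phi(R)/\alpha}$ with
\[
\alpha=\frac{\phi(R)\log(1/R)}{\log(R/r)}\in(0,1].
\]
The difficulty is that the constants $C_\alpha$ depend on $\alpha$, while we need a single constant. I split into two regimes.

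In the regime $\alpha\in[\alpha_0,1]$, I take a finite mesh $\alpha_0=\alpha_1<\dots<\alpha_m=1$ and let $C$ be the minimum of the finitely many constants $C_{\alpha_i}$ for exponent $s'$. For $\alpha\in[\alpha_i,\alpha_{i+1}]$, set $r_{i}=R^{1+\phi(R)/\alpha_{i+1}}\ge r$. By monotonicity of $N$ in $r$,
\[
N_r\ge N_{r_i}\ge C(R/r_i)^{s'}.
\]
Furthermore,
\[
\frac{\log(R/r)}{\log(R/r_i)}=\frac{\alpha_{i+1}}{\alpha}\le\frac{\alpha_{i+1}}{\alpha_i}.
\]
Choosing the mesh so that $s'\ge s\,\alpha_{i+1}/\alpha_i$ therefore yields $N_r\ge C(R/r)^s$ uniformly on this regime.

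In the regime $\alpha<\alpha_0$, where $r$ is very small relative to $R$, I chain scales. I set $\rho_0=R$ and $\rho_{j+1}=\rho_j^{1+\phi(\rho_j)/\alpha_0}$, and stop at the last $\rho_k\ge r$; the final partial step from $\rho_k$ to $r$ is covered by the first regime or by the trivial bound $N\ge 1$ combined with a mesh argument. At each step I use the standard doubling/packing inequality
\[
N_{\rho_{j+1}}(B(x,\rho_j)\cap F)\;\gtrsim\; N_{\rho_j}(B(x,\rho_{j-1})\cap F)\cdot\inf_y N_{\rho_{j+1}}(B(y,\rho_j)\cap F),
\]
obtained from a maximal $\rho_j$-separated set inside $B(x,\rho_{j-1})$, with a multiplicative constant $c$ depending only on the doubling constant. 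Each step contributes a factor $cC\,\rho_j^{-\phi(\rho_j)s'/\alpha_0}$, and the product of the ratios $\rho_j/\rho_{j+1}$ telescopes to $R/\rho_k$.

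The main obstacle is controlling the accumulated constant $(cC)^k$. My plan is to use property (2) of a dimension function: $\phi(\rho)\log(1/\rho)\to\infty$, so each single-step gain $\rho_j^{-\phi(\rho_j)(s'-s)/\alpha_0}$ eventually exceeds $(cC)^{-1}$. Thus for $R<R_0$ every step loses nothing against exponent $s$, and the case $R\ge R_0$ is absorbed into the constant. I also need to check that the bookkeeping at the final partial step costs only a bounded factor, which I would handle by the same mesh comparison as in the first regime.
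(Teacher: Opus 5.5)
Your argument is correct in substance and takes a genuinely different route from the paper.

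The paper proves the reverse inequality through bad scales. It takes witnesses $N_{r_n}(B(x_n,R_n)\cap F)\le (R_n/r_n)^{s+\varepsilon 2^{-n-1}}$ of the failure of the quasi-$\phi$ bound and writes $r_n=R_n^{1+\theta_n}$. It then regularises $(\theta_n)$ into a non-increasing sequence (Corollary~\ref{coro4} and Lemma~\ref{lemma4-1}). Next it threads an auxiliary dimension function $\widetilde{\phi}$ through the witnesses so that $\dim_{\rm L}^{\widetilde{\phi}}F\le s$ (Lemma~\ref{lemma4-2}). Finally it reads off $\alpha$ as the limit $b_0$ of $\phi(R_n^*)/\widetilde{\phi}(R_n^*)$, using Proposition~\ref{prop4} when $b_0=0$ and an asymptotic comparison of $\phi/b_0$ with $\widetilde{\phi}$ when $0<b_0<1$. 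The case $\lim_{R\to 0}\phi(R)>0$ is reduced separately to the lower Assouad spectrum.

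You instead prove the uniform lower bound directly. Finitely many $\phi_\alpha$-bounds on a mesh handle $r$ down to $R^{1+\phi(R)/\alpha_0}$. Chaining at the single rate $\phi_{\alpha_0}$, with property (2) of a dimension function absorbing the per-step constants, handles everything finer. This is more elementary and self-contained. It needs no auxiliary dimension function and no case split on $b_0$ or on $\lim_{R\to0}\phi(R)$; in the paper's split, the endpoint $b_0=1$ and the transfer of bounds from $\widetilde{\phi}$ to $\phi_{b_0}$ along a subsequence need further comparison arguments. Your route also gives more: the infimum is already determined by $\alpha$ in a window $[\alpha_*,1)$, with $\alpha_*>0$ depending only on $\alpha_0$ and a bound for $\phi$. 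What the paper's route buys is that the bad scales are organised along a single dimension function, the device it reuses for Proposition~\ref{low1} and Theorem~\ref{THM_lower_dim}.

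Three details of your sketch need repair; each fix is routine.

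\textbf{The top node of the mesh.} The node $\alpha_m=1$ requires a constant for $\phi_1=\phi$ at exponent $s'$, and the hypothesis $s'<\inf_{0<\alpha<1}\dim_{\rm L}^{\phi_\alpha}F$ does not provide one. Instead, stop the mesh at some $\alpha'<1$. For $R^{1+\phi(R)/\alpha'}\le r\le R^{1+\phi(R)}$, compare with $r'=R^{1+\phi(R)/\alpha'}$ by doubling: $N_r\gtrsim N_{r'}\,(r'/r)^{\log_2 K}$, with $K$ the doubling constant. Since $r/r'\le (R/r)^{1/\alpha'-1}$, the exponent is at least $s'-(1/\alpha'-1)\log_2 K$, which is $\ge s$ once $\alpha'$ is close to $1$.

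\textbf{The final partial step.} Suppose $\rho_k^{1+\phi(\rho_k)}<r<\rho_k$. Then $r$ lies outside your first regime relative to $\rho_k$. The trivial bound $N\ge 1$ loses the factor $(\rho_k/r)^s$, which can be as large as $\rho_k^{-\phi(\rho_k)s}\to\infty$, so it cannot be absorbed. End the chain at $\rho_{k-1}$ instead. Since $\phi(\rho_k)\log(1/\rho_k)\le (1+M/\alpha_0)\,\phi(\rho_{k-1})\log(1/\rho_{k-1})$, with $M$ a bound for $\phi$ near $0$, the $\beta$ with $r=\rho_{k-1}^{1+\phi(\rho_{k-1})/\beta}$ lies in $[\alpha_0/(1+\alpha_0+M),\alpha_0)$. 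Extending your mesh down to $\alpha_0/(1+\alpha_0+M)$ therefore closes the argument at the cost of one bounded constant.

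\textbf{The two-scale inequality.} As displayed, it has the wrong ball on the left. Iterate it with the outer ball fixed at $B(x,R)$, via nested disjoint packings as in Lemma~\ref{lower bound estimate}, so that the outer radius does not shrink at every step.
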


For the interpolation properties of $\phi$-lower dimension from lower Assouad dimension to quasi-lower Assouad dimension, we will show

\begin{theorem}\label{THM_interpolation_positive}
Given $F \subseteq \mathbb{R}^d$ and $s \in [\dim_{\rm L} F, \dim_{\rm qL} F]$, then there exist a dimension function $\phi: R \in (0,1) \to [0,d]$ such that 
\[
\dim_{\rm L}^\phi F = \underline{\dim}_{\rm L}^\phi F  = s.
\]
\end{theorem}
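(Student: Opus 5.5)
Throughout, $\lasp$ denotes the lower Assouad spectrum, with $r=R^{1/\theta}$. The plan is to search within the family of constant dimension functions and then use a continuity/intermediate value argument. I read the codomain $[0,d]$ loosely; the natural choice is $\phi\equiv\rho$ constant with $\rho\in(0,\infty)$. This $\phi$ satisfies condition (1) weakly and condition (2), since $\rho\log(1/R)\to\infty$. For $\phi\equiv\rho$ we have $r=R^{1+\rho}$, i.e. $\theta=1/(1+\rho)$. Hence $\dim_{\rm L}^{\phi}F=\lasp F$, and $\underline{\dim}_{\rm L}^\phi F$ is the quasi-lower spectrum $\underline{\dim}_{\rm L}^\theta F$.

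\textbf{Limits at the ends.} By Chen et al.\ \cite{CDW2017,CWC2020}, $\lim_{\theta\to1}\lasp F=\dim_{\rm qL}F$, and the quasi-lower spectrum also tends to $\dim_{\rm qL}F$ as $\theta\to1$. The endpoint $s=\dim_{\rm L}F$ is not reached by constant $\phi$, since $\lasp F\ge\dim_{\rm qL}F$ in general and $\dim_{\rm qL}F$ may exceed $\dim_{\rm L}F$. The constant family therefore only handles $s=\dim_{\rm qL}F$, approached as $\rho\to0$. So I would instead use the genuinely non-constant freedom of $\phi$.

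\textbf{Main construction.} Take $\phi(R)=c/\log(1/R)\cdot g(R)$ with $g(R)\to\infty$ slowly, so that $R^{\phi(R)}=e^{-c g(R)}$. In this regime the ratio $R/r=e^{cg(R)}$ grows arbitrarily slowly, and the $\phi$-lower dimension approaches $\dim_{\rm L}F$. I would prove this by a diagonal argument. Choose witnesses $x_k,R_k\to0$ and $r_k$ with $N_{r_k}(B(x_k,R_k)\cap F)\le (R_k/r_k)^{\dim_{\rm L}F+\varepsilon_k}$. The ratios $R_k/r_k$ can be taken $\to\infty$: if they stay bounded, the bound on $N_{r_k}$ is a constant and gives nothing beyond $\dim_{\rm L}F=0$. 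Now define $g$ piecewise so that $e^{cg(R_k)}=R_k/r_k$, interpolating monotonically between the $R_k$ after passing to a sparse subsequence. This forces $\dim_{\rm L}^\phi F\le\dim_{\rm L}F$; the reverse inequality is automatic. For intermediate $s\in(\dim_{\rm L}F,\dim_{\rm qL}F)$, consider the family $\phi_t$ interpolating between this slow function ($t=0$) and functions with $\phi\log(1/R)\asymp\log(1/R)$ ($t=1$), the latter giving values near $\dim_{\rm qL}F$. For example, take $\phi_t(R)$ to be the geometric interpolation $\phi_0(R)^{1-t}\phi_1(R)^t$.

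\textbf{Equality of the two dimensions and the main obstacle.} To get $\dim_{\rm L}^\phi F=\underline{\dim}_{\rm L}^\phi F$, I would use Theorem~\ref{THM_variational_principle}: it suffices that $\alpha\mapsto\dim_{\rm L}^{\phi_\alpha}F$ attains its infimum at the endpoint, i.e.\ is monotone in $\alpha$. I would arrange this by choosing $\phi$ so that $\phi_\alpha$ is equivalent to $\phi$ in the sense $\phi/\phi_\alpha\to1$, and then invoke Theorem~\ref{THM_equivalence}. This is impossible literally for $\alpha\ne1$. Instead I would build the slow regime from $g$ with $g(R)/\alpha$ still admissible, and rescale the witnesses so that the diagonal construction works uniformly in $\alpha$; for that I would pick $r_k$ with $\log(R_k/r_k)$ growing superexponentially along the sequence.

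The main obstacle is continuity of $t\mapsto\dim_{\rm L}^{\phi_t}F$, which is needed to hit every intermediate $s$. I would try to prove a Lipschitz-type estimate. If $\phi\le\psi\le(1+\delta)\phi$, then covering numbers at scale $R^{1+\psi}$ are comparable to those at scale $R^{1+\phi}$, up to a factor $(R^{\phi-\psi})^{\dim_{\rm A}}$ bounded via doubling. This gives $|\dim^\phi_{\rm L}F-\dim^\psi_{\rm L}F|\lesssim\delta$, and the intermediate value theorem then finishes the proof.
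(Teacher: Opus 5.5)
The gap is the intermediate-value step. Your Lipschitz estimate is correct; it is essentially Proposition~\ref{prop 2}. But it only compares $\phi$ and $\psi$ with $\phi\le\psi\le(1+\delta)\phi$ for all small $R$. Along your family $\phi_t=\phi_0^{1-t}\phi_1^{t}$, with $\phi_0\to0$ and $\phi_1\asymp 1$, you have $\phi_{t'}/\phi_t=(\phi_1/\phi_0)^{t'-t}\to\infty$ for every $t'>t$. So the estimate never applies to two members of the family, and continuity is in fact false.

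Here is a counterexample. By Proposition~\ref{homo_moran_formula}, for a homogeneous Moran set, $\dim_{\rm L}^{\phi}F$ is the $\liminf$ of averages of a local-dimension profile over the windows $[u,u+\phi u]$, where $u=\log(1/R)$. Let the profile equal $a\in(0,1)$ on very sparse blocks $[u_k,u_k+u_k^{t_0}]$ and $1$ elsewhere. Then $\dim_{\rm L}F=a$ and $\dim_{\rm qL}F=1$. The windows of $\phi_t$ have length $(cg)^{1-t}u^{t}$. For slowly growing $g\to\infty$, as you intend, this is $o(u^{t_0})$ for $t<t_0$ and much larger than $u^{t_0}$ for $t\ge t_0$. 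Hence $\dim_{\rm L}^{\phi_t}F=a$ for $t<t_0$ and $\dim_{\rm L}^{\phi_t}F=1$ for $t\ge t_0$, so your family misses all of $(\dim_{\rm L}F,\dim_{\rm qL}F)$. The right choice for this $F$ is $\phi(R)=\frac{1-a}{1-s}(\log\frac{1}{R})^{t_0-1}$, which is tuned to the block lengths. In general $\phi$ must be extracted from the fine structure of $F$, not picked from an a~priori one-parameter family.

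Independently, an intermediate-value argument gives no control of $\underline{\dim}_{\rm L}^{\phi}F$. For $s=\dim_{\rm L}F$ equality is automatic. For $s>\dim_{\rm L}F$, Theorem~\ref{THM_variational_principle} requires $\dim_{\rm L}^{\phi/\alpha}F\ge s$ for all $\alpha\in(0,1)$, and Example~\ref{example 2} shows this fails in general. Your remark about superexponentially spaced witnesses does not address this.

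The paper avoids both problems with a direct construction, Proposition~\ref{THM_interpolation_2}:
\begin{itemize}
\item Using $\omega(x,y)$, it builds $\phi$ piecewise, alternating pieces $\log k_n/\log R$ (on which $R^{1+\phi(R)}=k_nR$) with constant pieces $\theta_{n+1}$.
\item It pushes the curve $y=t^{1+\phi(t)}$ as high as possible subject to $\omega\ge s$ on the whole region below it, until the curve meets a point with $\omega\le s+\varepsilon 2^{-n}$.
\item The first property controls all $r\le R^{1+\phi(R)}$ at once and gives $\underline{\dim}_{\rm L}^{\phi}F\ge s$. The second gives $\dim_{\rm L}^{\phi}F\le s$, by Claim~\ref{cm3}.
\end{itemize}
The endpoint $s=\dim_{\rm L}F$ is Theorem~\ref{THM_lower_dim}. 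Even there, your diagonal step needs more than a sparse subsequence. For $\phi$ to be a dimension function, $\theta_k=\log(R_k/r_k)/\log(1/R_k)$ must be non-increasing, and this requires the window-splitting of Corollary~\ref{coro4} and Lemma~\ref{lemma4-1}.
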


Notably, the lower Assouad dimension admits a more general recovery as follows.
\begin{theorem}\label{THM_lower_dim}
Given $F \subseteq \mathbb{R}^d$. For any dimesion function $\phi(R)$, we can choose a dimension funtion $\Psi(R)$ satisfying $\Psi(R) \leq \phi(R)$ for all sufficiently small $R  > 0$ such that $\dim_{\mathrm{L}}^{\Psi}F = \underline{\dim}_{\mathrm{L}}^{\Psi}F = \dim_{\mathrm{L}}F$.
\end{theorem}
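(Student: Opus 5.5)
Since $\lad F \le \underline{\dim}_{\rm L}^{\Psi} F \le \lad^{\Psi} F$ holds for every dimension function $\Psi$, it suffices to construct $\Psi \le \phi$ (for small $R$) with $\lad^{\Psi} F \le \lad F$. If $\lad F = \lbd F$ this is automatic, so assume $\lad F < \lbd F$; in particular $\lad F < d$. Put $t_0 = \lad F$ and fix a sequence $t_n \downarrow t_0$. By the definition of $\lad F$, for each $n$ there are scales $0<r_n<R_n<1$ and points $x_n \in F$ with
\[
N_{r_n}(B(x_n,R_n)\cap F) \le (R_n/r_n)^{t_n}.
\]
Using the fact that $t_n$ lies strictly above $\lad F$, we can choose these witnesses with $R_n \to 0$ and $R_n/r_n \to \infty$ as fast as we like. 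The reason is that if the ratio $R/r$ stayed bounded, or $R$ stayed bounded below, then the doubling property (or the lower box dimension bound) would give a uniform constant $C$. Passing to a subsequence, we can therefore ensure that $R_{n+1}$ is much smaller than $r_n$ and that $R_n/r_n$ grows super-exponentially in $n$.

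Next, I will define $\Psi$ so that $R_n^{1+\Psi(R_n)} = r_n$, i.e.\ $\Psi(R_n)\log(1/R_n) = \log(R_n/r_n)$, and interpolate between consecutive scales. Concretely, I will set $g(R) = \Psi(R)\log(1/R)$ and choose $g$ to be nondecreasing as $R \downarrow 0$, piecewise constant on $[R_{n+1},R_n)$ with value $\log(R_n/r_n)$. This choice of $g$ satisfies condition (2) of a dimension function, since $g \to \infty$. Condition (1) needs $\Psi$ itself to decrease as $R\to0$. Since $\Psi = g/\log(1/R)$ with $g$ constant on each block, $\Psi$ decreases within the block. At the jumps we need $\log(R_{n+1}/r_{n+1})/\log(1/R_{n+1}) \le \log(R_n/r_n)/\log(1/R_n)$, and this can be arranged by choosing $r_{n+1}$ appropriately. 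This works because shrinking $r$ only increases $R/r$, and $N_r$ bounds of the form above at a given $R$ persist for smaller $r$ only up to changes in $t$. So I would instead choose, for a prescribed target exponent ratio $\Psi(R_n)=\eta_n$ with $\eta_n \downarrow 0$ slowly, the witness scale $r_n = R_n^{1+\eta_n}$.

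This requires the key claim that for any $t>\lad F$ and any $\eta>0$ there exist arbitrarily small $R$ with $\inf_x N_{R^{1+\eta}}(B(x,R)\cap F) \le R^{-\eta t}$. This is where I expect the main obstacle, since the lower Assouad witnesses come with uncontrolled ratios $\log(R/r)/\log(1/R)$. I would handle it by splitting into cases. If the witness ratios can be taken large, I use the quasi-lower dimension relation and the subadditivity-type covering inequality $N_{r}(B(x,R)) \ge N_{\rho}(B(x,R))\cdot\min_y N_r(B(y,\rho))$ to pass to an intermediate scale $\rho = R^{1+\eta}$ at which the local count is still small. If the witness ratios are only small, I take $\eta_n$ equal to those ratios, which tend to zero, so that $\Psi\le\phi$ for small $R$ is automatic because $\phi(R)\log(1/R)\to\infty$ while we only need $\Psi\le\phi$.

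Finally, I will ensure $\Psi \le \phi$ by choosing $\eta_n \le \phi(R_{n})$, decreasing to $0$ and with $\eta_n \log(1/R_n)\to\infty$. This is possible because $R_n$ can be taken as small as needed relative to $\eta_n$. Then along $R=R_n$ we get $N_{R_n^{1+\Psi(R_n)}}(B(x_n,R_n)\cap F) \le R_n^{-\Psi(R_n) t_n}$. For any $s>t_0$, eventually $t_n<s$, so no constant $C$ can satisfy $C R_n^{-\Psi(R_n)s} \le R_n^{-\Psi(R_n)t_n}$, because $\Psi(R_n)\log(1/R_n)\to\infty$. Hence $\lad^\Psi F \le \lad F$, which gives the result.
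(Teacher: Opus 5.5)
Your reduction to showing $\lad^{\Psi}F\le\lad F$ is fine, and so is your closing verification along $R=R_n$. The gap is in the step that is supposed to produce the witnesses: your key claim is false as stated. For a fixed $\eta>0$, ``for every $t>\lad F$ there are arbitrarily small $R$ with $\inf_x N_{R^{1+\eta}}(B(x,R)\cap F)\le R^{-\eta t}$'' says that the lower Assouad spectrum at $\theta=1/(1+\eta)$ is at most $\lad F$. If that held for every $\eta$, it would force $\qlad F=\lad F$ for every $F$. This fails, for example, for a homogeneous Moran set in $\RR$ with ratio $1/2$ except for runs of ratio $2^{-\alpha}$ ($\alpha>1$) of lengths $k_j\to\infty$ starting at levels $n_j$, with $k_j/n_j\to0$ and $n_{j+1}/n_j\to\infty$; there $\lad F=1/\alpha<1=\qlad F$. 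So you cannot prescribe $\eta_n$ and then find a witness of exactly that ratio at a scale you choose, which is what your last paragraph does; the scales are dictated by $F$.

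Your case split does not repair this. If the witness ratios $\theta_n=\log(R_n/r_n)/\log(1/R_n)$ merely tend to $0$, nothing gives $\theta_n\le\phi(R_n)$. In the example with $k_j\approx n_j^{3/4}$, the full-run witnesses have ratio of order $n_j^{-1/4}$, while $\phi(R)=(\log(1/R))^{-1/2}$ is of order $n_j^{-1/2}$ there. The fact that $\phi(R)\log(1/R)\to\infty$ says nothing about this comparison. If the ratios are large, the covering inequality at the same $R$ with $\rho=R^{1+\eta}$ only tells you that one of the two factors, $N_\rho(B(x,R)\cap F)$ or $\inf_y N_r(B(y,\rho)\cap F)$, has exponent at most $t$. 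When it is the second, you are left with a window of ratio $(\theta_n-\eta)/(1+\eta)$ around another centre, and nothing yet gives a small count at $(R,R^{1+\eta})$.

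The missing idea is the paper's scale-reduction step, Corollary \ref{coro4} as used in Lemma \ref{lemma4-1}. Start from a witness $N_{R^{1+\theta}}(B(x,R)\cap F)<R^{-\theta t}$. Set $\rho_0=R$ and $\rho_{k+1}=\rho_k^{1+\theta^*}$, and chain Lemma \ref{lower bound estimate} through the roughly $\log(1+\theta)/\log(1+\theta^*)$ windows $[\rho_{k+1},\rho_k]$ down to $R^{1+\theta}$. If every window had $\inf_y N_{\rho_{k+1}}(B(y,\rho_k)\cap F)\ge(\rho_k/\rho_{k+1})^{t+\varepsilon}$, the product would contradict the witness, provided $\theta^*\log(1/R)$ is large enough to absorb the constant lost at each step. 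Hence some $R^*\in(R^{1+\theta},R)$ and $x^*\in F$ give a witness of ratio $\theta^*$ and exponent $t+\varepsilon$. Because $R^*\ge R^{1+\theta}$ and $\phi$ is monotone, taking $\theta^*\le\min\{\eta(\varepsilon)\theta,\phi(R^{1+\theta})\}$ (with $\eta(\varepsilon)$ from Corollary \ref{coro4}) guarantees $\theta^*\le\phi(R^*)$. Meanwhile $\phi(R)\log(1/R)\to\infty$ keeps $\theta^*\log(1/R^*)\to\infty$, after first reducing $\theta_n$ to a bounded sequence. This is what the paper's proof uses when $\theta_n>\phi(R_n)$.

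Your interpolation also needs replacing. With $g=\Psi\log(1/R)$ constant on $[R_{n+1},R_n)$, $\Psi$ tends to $g_n/\log(1/R_{n+1})$ at the bottom of the block. It then jumps up to $g_{n+1}/\log(1/R_{n+1})$ whenever $g_{n+1}>g_n$, which must happen infinitely often since $g\to\infty$. The condition $\Psi(R_{n+1})\le\Psi(R_n)$ that you wrote is necessary but not sufficient. Use instead the two-piece minimal interpolant described after Lemma \ref{lemma4-2}: first $\theta_n\log R_n/\log R$, then the constant $\theta_{n+1}$. It stays below $\phi$ once $\theta_n\le\phi(R_n)$, $\theta_n$ is non-increasing and $\theta_n\log(1/R_n)$ is increasing. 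On the first piece $\Psi(R)\log(1/R)=\theta_n\log(1/R_n)\le\phi(R_n)\log(1/R_n)\le\phi(R)\log(1/R)$, and on the second $\Psi=\theta_{n+1}\le\phi(R_{n+1})\le\phi(R)$.
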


However, the $\phi$-lower dimensions and modified $\phi$-lower spectrum fail to fully interpolate between the lower Assouad dimension to the lower box dimension as follows, which means there exist examples such that neither the $\phi$-lower dimension, quasi $\phi$-lower dimension and the modified $\phi$-lower dimension satisfy the interpolation property between $(\qlad F, \lbd F]$.
\begin{theorem}\label{THM_interpolation_negative}
There exist a compact set $F \subseteq \RR^2$ such that for all dimension functions $\phi(R)$, such that $\dim_{\rm L}^\phi F = \underline{\dim}_{\rm L}^\phi F < \dim_{\rm ML}^\phi F < \underline{\dim}_{\rm B} F$.
\end{theorem}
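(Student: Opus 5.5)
We plan to build $F$ as a disjoint union $F = E_1 \cup E_2$ of two compact pieces with separated supports, each locally very homogeneous, so that all $\phi$-type lower dimensions can be computed for every dimension function $\phi$ at once. Two general observations drive the argument. First, for a finite union of well-separated compact sets, the lower-type dimensions are governed by the worst piece: for all sufficiently small $R$, $\inf_{x\in F} N_r(B(x,R)\cap F)$ is the minimum of the corresponding infima over $E_1$ and $E_2$. Hence
\[
\dim_{\rm L}^\phi F = \min\{\dim_{\rm L}^\phi E_1, \dim_{\rm L}^\phi E_2\}, \qquad \underline{\dim}_{\rm L}^\phi F = \min\{\underline{\dim}_{\rm L}^\phi E_1, \underline{\dim}_{\rm L}^\phi E_2\}.
\]
Second, lower box dimension is stable under finite unions in the opposite direction, $\underline{\dim}_{\rm B} F \ge \max_i \underline{\dim}_{\rm B} E_i$, and $\dim_{\rm ML}^\phi F \ge \max_i \dim_{\rm L}^\phi E_i$ because each $E_i\subseteq F$.

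For the pieces, we take $E_1 = C_a\times C_a$ and $E_2 = C_b\times C_b$, where $C_a$ and $C_b$ are self-similar Cantor sets in $\mathbb{R}$ and $E_1$, $E_2$ are translated to be far apart. These are Ahlfors--David regular of dimensions $0<s_1<s_2\le 2$, so for every dimension function $\phi$ we have $\dim_{\rm L}^\phi E_i = \underline{\dim}_{\rm L}^\phi E_i = s_i$. This follows from $\lad E_i = \underline{\dim}_{\rm B} E_i = s_i$ together with the chain $\lad \le \underline{\dim}_{\rm L}^\phi \le \lad^\phi \le \lbd$ from the introduction. Consequently $\dim_{\rm L}^\phi F = \underline{\dim}_{\rm L}^\phi F = s_1$ for every $\phi$. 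This settles the equality and supplies the lower bound $\dim_{\rm ML}^\phi F \ge s_2 > s_1$, which gives the first strict inequality uniformly in $\phi$.

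The main obstacle is the remaining strict inequality $\dim_{\rm ML}^\phi F < \underline{\dim}_{\rm B} F$. With only ADR pieces we get $\underline{\dim}_{\rm B} F = s_2$ and $\dim_{\rm ML}^\phi F = s_2$ as well, so the construction needs one more ingredient. We would add a third separated compact piece $E_3$, chosen so that
\[
\underline{\dim}_{\rm B} E_3 = t > s_2 \quad\text{but}\quad \dim_{\rm L}^\phi E \le s_2 \text{ for every } E \subseteq E_3 \text{ and every } \phi.
\]
Our first attempt is a countable set, for instance $E_3 = \{0\}\cup\{ n^{-\beta} e_1 : n\ge 1 \}$ with $\beta$ small, which gives box dimension $1/(1+\beta)$. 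Every subset of $E_3$ has an isolated point, so its $\phi$-lower dimension is $0$. Adding $E_3$ does not lower $\dim_{\rm L}^\phi F$ below $s_1$ in a way that breaks the equality, since both sides drop to $0$ and remain equal. However, since $E_3 \subseteq \mathbb{R}$ we have $t \le 1$, so this forces $s_2 < 1$ in the plane.

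It remains to control $\dim_{\rm ML}^\phi F$ from above, and this is the delicate step. For $E\subseteq F$, either $E$ contains a point of $E_3\setminus\{0\}$, which is isolated and forces $\dim_{\rm L}^\phi E = 0$, or $E$ is essentially contained in $E_1\cup E_2\cup\{0\}$. In the second case we use the monotonicity-type bound $\dim_{\rm L}^\phi E \le \underline{\dim}_{\rm B}$ of a neighbourhood piece of $E$ around its sparsest point, which is at most $s_2$. Together these give $\dim_{\rm ML}^\phi F = s_2 < t \le \underline{\dim}_{\rm B} F$. To keep $\dim_{\rm L}^\phi F = \underline{\dim}_{\rm L}^\phi F$ with a value strictly below $s_2$, we either accept the common value $0$ or place $E_3$ inside a product-type arrangement. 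The care required is in checking the subset bound uniformly over all $\phi$.
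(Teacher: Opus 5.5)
Your final construction is correct. With $F=E_1\cup E_2\cup E_3$ and $s_2<1/(1+\beta)$, each inequality checks out:
\begin{itemize}
\item The isolated points of $E_3$ give $\dim_{\rm L}^\phi F=\underline{\dim}_{\rm L}^\phi F=0$ for every $\phi$, because $R^{-\phi(R)s}\to\infty$ for $s>0$.
\item The subset $E_2$ gives $\dim_{\rm ML}^\phi F\ge s_2$.
\item Your dichotomy gives $\dim_{\rm ML}^\phi F\le s_2$: a subset meeting $E_3$ has an isolated point, and any other subset lies inside an Ahlfors regular piece of dimension at most $s_2$.
\item Monotonicity gives $\underline{\dim}_{\rm B}F\ge 1/(1+\beta)>s_2$.
\end{itemize}

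The paper uses the same mechanism but with a single ready-made example, the popcorn graph $S_t$ with $0<t<2$. The points $(p/q,q^{-t})$ are isolated and kill both $\phi$-lower dimensions. The horizontal segment supplies $\dim_{\rm ML}^\phi S_t=1$. The formula $\dim_{\rm B}S_t=4/(2+t)>1$, quoted from the literature, gives the last strict inequality.

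Your route is more elementary and self-contained. It uses only the box dimension of $\{n^{-\beta}\}$ and Ahlfors regularity of a self-similar product, and compactness is automatic. Your dichotomy is exactly the upper bound on $\dim_{\rm ML}^\phi$ that the paper only asserts. Moreover, the paper's $S_t$ as written (rational abscissae only) is neither compact nor contains the irrational segment. One has to pass to its closure $S_t\cup([0,1]\times\{0\})$, and there your argument applies verbatim.

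The paper's example lets the isolated points accumulate in the plane, so the box dimension can exceed $1$ and the middle value can be $1$. Your restriction $s_2<1$ is only an artifact of placing the sequence on a line.

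Two clean-ups:
\begin{itemize}
\item Once $E_3$ is added, $E_1$ and your opening computation $\dim_{\rm L}^\phi F=s_1$ play no role. $F=E_2\cup E_3$ suffices, and the ``product-type arrangement'' is unnecessary.
\item In the second case of the dichotomy it is simpler to argue directly than through a localised box-dimension bound: for $x\in E\cap E_i$ and small $R$, $N_{R^{1+\phi(R)}}(B(x,R)\cap E)\le N_{R^{1+\phi(R)}}(B(x,R)\cap E_i)\lesssim R^{-\phi(R)s_i}$ by Ahlfors regularity.
\end{itemize}
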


\subsection{Organisation of this paper}

For the rest of this paper, we provide the necessary preliminaries in Section 2, and investigate the  properties of the generalised lower dimensions in Section 3. Specifically, in Section 3.1, we discuss the equivalence of certain notions and prove Theorem \ref{THM_equivalence}. In Section 3.2, we study the class of rate windows determined by $\phi(R)$ and examine the analytic properties of the $\phi$-lower dimension, culminating in Theorem \ref{THM_variational_principle}. Section 3.3 is devoted to the positive interpolation properties of the $\phi$-lower dimension, with the main result stated in Theorem \ref{THM_interpolation_positive} and \ref{THM_lower_dim}, while Section 3.4 focuses on the negative interpolation aspect, as encapsulated in Theorem \ref{THM_interpolation_negative}.

\section{Preliminaries}

\subsection{Notations and supporting lemmas}

In this part, we introduce some notations and auxillary lemmas. Let $a , b \in \mathbb{R}$. We write $a \lesssim b$ if there exist a constant $c$ such that $a \leq cb$, and $a \gtrsim b$ if there exist a constant $c$ such that $a \geq cb$. We also write $a \approx b$ if $a \lesssim b$ and $a \gtrsim b$. 

Given a n.b.d. space $F$, for any $r > 0$, we define $N_r(F)$ to be the mininal number of balls of radius $r$ needed to cover $F$, and $M_r(F)$ to be the maximal number of discrete subset of $F$ with distance $r$. Given n.b.d. space $F$, it follows from doubling property that there exist a constant $C  > 1$ such that for any $x \in F$ and $r > 0$, $N_{r}(B(x,2r) \cap F) \leq C$, and then
\begin{equation}\label{doubling property}
 N_r(F) \approx M_r(F),
\end{equation}
this indicates $N_r(F)$ can be controlled by $M_r(F)$ and conversely, up to multiplicative constants. More details are recommended to refer \cite{KF2014, JF2020} and references therein. 

For any $\theta \in (0,+\infty)$, we write $R_\theta =  R^{1 + \phi(R)/\theta}$. Based on the doubling property, the following result gives an upper bound estimate for $N_{R_\alpha}(B(x,R) \cap F)$ in terms of $N_{R_\beta}(B(x,R) \cap F)$ for all $0 < \alpha < \beta < \infty$.
\begin{lemma}\label{upper bound estimate}
For n.b.d. space $F$, for any sufficiently small $R > 0$,
\[
N_{R_\alpha}(B(x,R) \cap F) \lesssim N_{R_\beta}(B(x,R) \cap F)  \cdot \sup_{y \in F} N_{R_\alpha}(B(y,R_\beta) \cap F)
\]
\end{lemma}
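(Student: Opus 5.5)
Observe first that $0<\alpha<\beta$ gives $\phi(R)/\alpha > \phi(R)/\beta$. Since $0<R<1$, this yields $R_\alpha = R^{1+\phi(R)/\alpha} < R^{1+\phi(R)/\beta} = R_\beta < R$. So the three scales are nested: $R_\alpha < R_\beta < R$. The proof is then a standard two-stage covering argument: cover at the intermediate scale $R_\beta$, then refine each piece at the finer scale $R_\alpha$. The doubling property \eqref{doubling property} is used only to handle the fact that covering balls need not be centred in $F$.

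Fix $x\in F$ and set $N = N_{R_\beta}(B(x,R)\cap F)$. Choose balls $B(z_1,R_\beta),\dots,B(z_N,R_\beta)$ covering $B(x,R)\cap F$. We may discard any ball not meeting $F$. For each remaining ball, pick a point $y_i \in B(z_i,R_\beta)\cap F$; then $B(z_i,R_\beta)\cap F \subseteq B(y_i,2R_\beta)\cap F$. Hence
\[
N_{R_\alpha}(B(x,R)\cap F) \le \sum_{i=1}^N N_{R_\alpha}(B(y_i,2R_\beta)\cap F).
\]

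The remaining step is to replace the radius $2R_\beta$ by $R_\beta$ while keeping the centre in $F$. Let $C$ be the doubling constant. Then $B(y_i,2R_\beta)\cap F$ is covered by at most $C$ balls of radius $R_\beta$. Re-centring each of these at a point of $F$ gives at most $C$ balls $B(w,2R_\beta)$ with $w\in F$, which is not yet radius $R_\beta$. Iterate once more instead: cover $B(y_i,2R_\beta)\cap F$ by at most $C^2$ sets of the form $B(w_j,R_\beta/2)\cap F$, after applying doubling twice. Each of these, re-centred at a point $w_j'\in F$, lies in $B(w_j',R_\beta)$. This gives
\[
N_{R_\alpha}(B(y_i,2R_\beta)\cap F)\le C^2 \sup_{y\in F} N_{R_\alpha}(B(y,R_\beta)\cap F).
\]
Combining the two displays yields the claim with implied constant $C^2$, which is independent of $x$ and $R$.

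No genuine obstacle is expected. The only care needed is this re-centring step, since the definition of $N_r$ allows arbitrary centres while the supremum is taken over centres in $F$. The doubling property of $F$ absorbs the resulting factor-of-2 change of radius. "Sufficiently small $R$" is only needed to guarantee $R<1$, so that the ordering $R_\alpha<R_\beta<R$ holds.
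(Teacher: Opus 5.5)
Your two-stage covering argument is correct, and it is the standard argument the paper has in mind; the paper states this lemma without proof, appealing only to the doubling property. One point should be made explicit: your second application of doubling is to balls $B(w,R_\beta)$ whose centres need not lie in $F$. It therefore uses doubling of the ambient space $X$, which the paper's setup provides, rather than the paper's formulation $N_r(B(x,2r)\cap F)\le C$ for $x\in F$. Alternatively, you can avoid re-centring altogether by covering $B(x,R)\cap F$ with balls $B(y_i,R_\beta)$ centred at a maximal $R_\beta$-separated subset and then using $N_{R_\beta}\approx M_{R_\beta}$.
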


For the finer lower bound estimates of cardinality of ball-covers or discrete subsets, the following result, from Hare and Troscheit\cite{HT2019}, gives a more refined lower bound for $M_r(F)$.

\begin{lemma}[\cite{HT2019}]\label{lower bound estimate}
Given $F \subseteq \RR^d$, $R > 0$ and $0 < r := r_k < r_{k-1} < \dots < r_1 < R$, then for any $x \in F$,
\begin{align*}
M_r(B(x,R) \cap F) & \geq M_{r_1}(B(x,R-r_1) \cap F) \cdot \inf_{y_1 \in F} M_{r_2}(B(y_1,r_1 - r_2) \cap F) \cdot \cdots \\
& \qquad \cdot \inf_{y_{k-1} \in F} M_{r_k}(B(y_{k-1}, r_{k-1}-{r_k}) \cap F).
\end{align*}
\end{lemma}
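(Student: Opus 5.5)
The last statement is Lemma \ref{lower bound estimate}, quoted from Hare and Troscheit. I would prove it by a greedy nested-packing argument, inducting on the number $k$ of intermediate scales.

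\emph{Case $k=1$.} Here $r=r_1$ and the claim reads $M_{r}(B(x,R)\cap F)\ge M_{r}(B(x,R-r)\cap F)$. This holds by monotonicity, since $B(x,R-r)\subseteq B(x,R)$ and any $r$-separated subset of the smaller set is an $r$-separated subset of the larger one.

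\emph{Inductive step.} Assume the inequality holds for every chain with $k-1$ intermediate scales, and fix a chain $0<r_k<\dots<r_1<R$.

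1. Choose a maximal $r_1$-separated set $\{y_1,\dots,y_m\}\subseteq B(x,R-r_1)\cap F$, so that $m=M_{r_1}(B(x,R-r_1)\cap F)$.

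2. Around each $y_i$, apply the induction hypothesis to the shorter chain $r_k<\dots<r_2<r_1$, with $r_1$ playing the role of $R$. This produces an $r_k$-separated subset of $B(y_i,r_1)\cap F$ whose cardinality is at least
\[
M_{r_2}(B(y_i,r_1-r_2)\cap F)\cdot\prod_{j=3}^{k}\inf_{y\in F}M_{r_j}(B(y,r_{j-1}-r_j)\cap F).
\]
The first factor is at least $\inf_{y_1\in F}M_{r_2}(B(y_1,r_1-r_2)\cap F)$. One point to check is that the inductive hypothesis is really being applied with first factor $M_{r_2}(B(y_i,r_1-r_2)\cap F)$; this is exactly how the hypothesis is phrased, so it is consistent.

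3. The separation is designed to stay inside the appropriate balls. Each point chosen at stage $j$ lies within distance $r_{j-1}-r_j$ of its parent. Summing these displacements telescopes, so every point of the final configuration lies in $B(x,R-r_1)+B(0,r_1-r_k)\subseteq B(x,R)$.

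4. Take the union of all the packings built around the $y_i$. Its cardinality is at least $m$ times the product of the infima, which is the desired right-hand side.

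The main obstacle is separation \emph{across} different parent balls. Two points descended from distinct $y_i,y_{i'}$ lie in $B(y_i,r_1)$ and $B(y_{i'},r_1)$ respectively. The parents satisfy only $d(y_i,y_{i'})\ge r_1$, so these balls can overlap and the union may fail to be $r_k$-separated.

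I would first try to repair this with a constant loss. Choose the parents $2r_1$-separated instead, and use the doubling estimate \eqref{doubling property}, $M_{r_1}\approx M_{2r_1}$, to compare the counts. The radii $r_{j-1}-r_j$ then need a mild adjustment (for instance $r_{j-1}/2$) so that the descendants of each $y_i$ stay inside the open ball $B(y_i,r_1)$. With $2r_1$-separated parents these balls are disjoint, and the distance between descendants of distinct parents is bounded below by $2r_1-$ (their displacements).

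Getting the exact, constant-free inequality as stated needs the same idea with strict inequalities handled carefully: each descendant of $y_i$ lies in the open ball $B(y_i,r_1-r_k)$. Since the quoted lemma has no constant, if this disjointness cannot be arranged I would accept a doubling constant per level. That costs $C^k$, which is harmless in the intended applications, where $k$ is bounded.
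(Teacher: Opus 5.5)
Your last paragraph flags the real problem, but it is not a matter of handling strict inequalities carefully. If $M_r(E)$ means the largest cardinality of an $r$-separated subset of $E$, the constant-free inequality is false, so the plan cannot be completed as you set it up.

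Here is a counterexample. Take $d=1$, $F=\mathbb{R}$, $k=2$, $R=30$, $r_1=1$, $r_2=3/5$. An open interval of length $L$ contains at most $\lceil L/\rho\rceil$ points with mutual distances $\ge\rho$, and this bound is attained. Hence
\begin{itemize}
\item $M_{3/5}(B(x,30)\cap F)=100$;
\item $M_1(B(x,29)\cap F)=58$;
\item $M_{3/5}(B(y,2/5)\cap F)=2$ for every $y$.
\end{itemize}
The right-hand side is therefore $116>100$. Geometrically, descendants of two parents at distance $\ge r_1$ are only guaranteed to be more than $r_1-2(r_1-r_k)=2r_k-r_1<r_k$ apart. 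This is exactly the overlap you observed, and it genuinely breaks the inequality.

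The missing idea is the right notion of $M_r$. The lemma holds, with no constant, for the packing number: the maximal number of pairwise disjoint open balls $B(z,r)$ with $z\in E$ (in $\mathbb{R}^d$, the largest cardinality of a $2r$-separated subset). The paper's phrase ``discrete subset with distance $r$'' is ambiguous, but the radii $R-r_1$ and $r_{j-1}-r_j$ in the statement are tailored to this reading.

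With it, your tree works verbatim and no doubling comparison $M_{r_1}\approx M_{2r_1}$ is needed:
\begin{itemize}
\item If $d(z,y)<r_{j-1}-r_j$, the triangle inequality gives $B(z,r_j)\subseteq B(y,r_{j-1})$. So disjointness of the parent balls passes to all descendants at every level.
\item The telescoping in your step 3 puts every final centre within $R-r_k$ of $x$.
\item In the induction, carry the stronger conclusion that the packing balls themselves, not just their centres, lie inside $B(y_i,r_1)$. Alternatively, build the $k$-level tree directly.
\end{itemize}

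Your fallback with a $C^k$ loss proves a weaker, different inequality, and its justification is wrong. In the proofs of Proposition \ref{prop4} and Proposition \ref{equiv def}, the number of levels grows without bound as $r\to 0$. A per-level constant is tolerable there only because each ratio $R_{i-1}/R_i\to\infty$, which lets it be absorbed into $\varepsilon$.
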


We will repeatedly use the following elementary but crucial lemmas for generalised lower Assouad type dimensions. The first lemma introduce the doubling property of $R^{-\phi(R)}$ where $\phi(R)$ is a dimension function.
\begin{lemma}\label{lem3}
Let	$\phi$ be a dimension function, then for any positive constant $0 < C< 1$, there exist a constant $M$ related to $\phi$ such that for any sufficiently small $R > 0$, we have
\begin{equation}\label{lem3 eq1}
R^{-\phi (R)} < 	(CR)^{-\phi(CR)}<C^{-M} R^{-\phi (R)}.
\end{equation}
\end{lemma}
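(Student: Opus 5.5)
The plan is to pass to logarithms and use the two defining properties of a dimension function separately, one for each inequality. Set $f(R) = \phi(R)\log(1/R)$, so that $R^{-\phi(R)} = e^{f(R)}$ and $(CR)^{-\phi(CR)} = e^{f(CR)}$. Since $\exp$ is strictly increasing, (\ref{lem3 eq1}) is equivalent to
\[
f(R) < f(CR) < f(R) + M\log(1/C),
\]
and I will prove these two inequalities separately.

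\emph{Left inequality.} Since $0<C<1$, we have $CR<R$. Property (2) of a dimension function says that $R \mapsto \phi(R)\log(1/R)$ increases as $R$ decreases to $0$, so $f(CR) \ge f(R)$. For the strict inequality I will read ``increases'' in (2) as strictly increasing, which the definition intends since $f$ is required to tend to infinity. If only monotonicity were available, the left inequality would hold with $\le$, and this weaker form is all that the later applications need. I expect this reading of the hypothesis to be the only delicate point.

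\emph{Right inequality.} Write
\[
f(CR) = \phi(CR)\log(1/R) + \phi(CR)\log(1/C).
\]
By property (1), $\phi$ decreases as $R \to 0$, i.e.\ $\phi$ is monotone non-decreasing in $R$. Hence $\phi(CR) \le \phi(R)$, and the first term is at most $f(R)$. For the second term, fix some $R_0\in(0,1)$. For all $R<R_0$ monotonicity gives $\phi(CR)\le \phi(R) \le \phi(R_0)$, so $\phi$ is bounded on $(0,R_0)$.

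Now choose any $M > \phi(R_0)$; this $M$ depends only on $\phi$ and not on $C$. Since $\log(1/C) > 0$,
\[
f(CR) \le f(R) + \phi(R_0)\log(1/C) < f(R) + M\log(1/C)
\]
for all $R<R_0$. Exponentiating both inequalities gives (\ref{lem3 eq1}) for every sufficiently small $R$, namely $R<R_0$.
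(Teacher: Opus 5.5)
Your proof is correct and is essentially the paper's argument: your split $f(CR)=\phi(CR)\log(1/R)+\phi(CR)\log(1/C)$ is just the logarithm of the paper's chain $(CR)^{-\phi(CR)}\le (CR)^{-\phi(R)}\le C^{-M}R^{-\phi(R)}$, and the left inequality again comes from property (2). Your caveat about strictness of the left inequality is well placed, since the paper leans on the same reading of ``increases''. However, your justification for it is not: tending to infinity does not force strict monotonicity, and the paper's own constructed dimension functions (Lemma \ref{lemma4-2}) have $\phi(R)\log(1/R)$ constant on intervals. So only the non-strict left inequality is available in general, and, as you note, that weaker form suffices for the later uses.
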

\begin{proof}

It follows from the definition of dimension function that there exist a constant $M  > 0 $ such that for any sufficiently small $R > 0$, $\phi(R) < M $, $\phi(R)$ decreases to $0$ and $R^{-\phi(R)}$ increase to infinity as $R \to 0$. Hence
\begin{equation}\label{lem3 eq2}
(CR)^{-\phi(CR)}\leq (CR)^{-\phi(R)} \leq C^{-M}  R^{-\phi (R)}.
\end{equation}
\end{proof}

Based on Lemma \ref{lem3}, we have the following lemma, which is important to give the lower bound of Lemma \ref{lower bound estimate} on certain scales.

\begin{lemma}\label{lem4}
Given n.b.d. space $F$ and dimension function $\phi$, let $s  = \dim_{\rm L}^{\phi} F > 0$, then for any $ \varepsilon>0$ and any $0 < C<1$, there exist $R'>0$ such that for any $0 < R < R'$, we have
	\begin{equation}\label{lem4 eq1}
		M_{R^{1+\phi(R)}}(B(x,CR)\cap F)\geq R^{-\phi(R)(s-\varepsilon)}
	\end{equation}
\end{lemma}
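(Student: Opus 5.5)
The plan is to compare the ball $B(x,CR)$ with a ball at the larger scale $CR$, where the definition of $\dim_{\rm L}^\phi F$ applies, and then use Lemma \ref{lem3} to convert the quantities back to scale $R$.

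\textbf{Applying the definition at scale $CR$.} Fix $\varepsilon>0$ and $0<C<1$. Since $s-\varepsilon/2<s$, the definition of $\dim_{\rm L}^\phi F$ gives a constant $c>0$ such that for all sufficiently small $\rho$ and all $x\in F$,
\[
N_{\rho^{1+\phi(\rho)}}(B(x,\rho)\cap F)\geq c\,\rho^{-\phi(\rho)(s-\varepsilon/2)} .
\]
I apply this with $\rho=CR$. This yields a lower bound for $N_{(CR)^{1+\phi(CR)}}(B(x,CR)\cap F)$, which is the covering number of the correct ball but at the wrong small scale.

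\textbf{Relating the two small scales.} I need to compare $r=R^{1+\phi(R)}$ with $r'=(CR)^{1+\phi(CR)}$. By Lemma \ref{lem3}, $(CR)^{-\phi(CR)}\le C^{-M}R^{-\phi(R)}$, so
\[
r'=CR\,(CR)^{\phi(CR)}\geq C^{1+M}R\,R^{\phi(R)}=C^{1+M}r .
\]
Thus $r$ is smaller than $r'$ up to the constant factor $C^{1+M}$. Since covering numbers are monotone and satisfy the doubling comparison (\ref{doubling property}),
\[
M_r(B(x,CR)\cap F)\gtrsim N_r(B(x,CR)\cap F)\gtrsim N_{r'}(B(x,CR)\cap F).
\]
The implied constants depend only on $C$, $M$ and the doubling constant. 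In fact, replacing a scale $r'$ by $r\ge C^{1+M}r'$ costs at most a bounded factor, and here monotonicity already gives the needed direction when $r\le r'$.

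\textbf{Converting the exponent.} By the first inequality of Lemma \ref{lem3}, $(CR)^{-\phi(CR)}>R^{-\phi(R)}$. Since $s-\varepsilon/2>0$ (if $\varepsilon\ge s$ the claim is trivial because $M_r\ge1$), combining the steps gives
\[
M_r(B(x,CR)\cap F)\geq c'\,R^{-\phi(R)(s-\varepsilon/2)} .
\]

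\textbf{Absorbing the constant.} This is the only delicate step. The constant $c'$ is absorbed because $R^{-\phi(R)}\to\infty$, which holds since $\phi(R)\log(1/R)\to\infty$ by condition (2) of a dimension function. So $c'R^{-\phi(R)\varepsilon/2}\ge1$ once $R<R'$, which yields (\ref{lem4 eq1}).

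The main obstacle I anticipate is keeping the direction of the scale comparison correct. The exponent must also remain positive, which is why I pass to $s-\varepsilon/2$.
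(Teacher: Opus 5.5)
Your proof is correct and follows essentially the same route as the paper: apply the definition at scale $CR$, compare the scales $(CR)^{1+\phi(CR)}$ and $R^{1+\phi(R)}$ via Lemma \ref{lem3} and doubling (this is the paper's bound $\sup_{x'} N_{(CR)^{1+\phi(CR)}}(B(x',R^{1+\phi(R)})\cap F)\le C_1$), use monotonicity of $R^{-\phi(R)}$ to convert the exponent, and absorb the constant using $R^{-\phi(R)}\to\infty$. One clarification on the direction you flagged: the first inequality of Lemma \ref{lem3} gives $C^{1+M}r\le r'\le Cr$, so $r'<r$ always, and the step $N_r\gtrsim N_{r'}$ rests entirely on the doubling comparison (via $r'\ge C^{1+M}r$) rather than on plain monotonicity.
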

\begin{proof}
Given $ 0 < C <1$, it follows from the doubling property that for any $x \in F$ and any $R > 0$, we obtain
\begin{align}\label{lem4 eq2}
\nonumber M_{(CR)^{1+\phi(CR)}}& (B(x,CR)\cap F)\\ 
		\leq&M_{R^{1+\phi(R)}}(B(x,CR)\cap F) \cdot \sup_{x' \in F} N_{(CR)^{1+\phi(CR)}}(B(x',R^{1+\phi(R)})\cap F).
\end{align}
According to Lemma \ref{lem3}, 
\[
R^{-\phi (R)} < 	(CR)^{-\phi(CR)}< C^{-M} R^{-\phi (R)}.
\]
Hence, by douling property of $F$, there exist a constant $C_1 > 0$ such that 
\begin{equation}\label{lem4 eq3}
\sup_{x' \in F} N_{(CR)^{1+\phi(CR)}}(B(x',R^{1+\phi(R)})\cap F)\leq C_1.
\end{equation}
By (\ref{lem4 eq2}), (\ref{lem4 eq3}), we have
\begin{align}\label{ll}
	M_{R^{1+\phi(R)}}(B(x,CR)\cap F)\geq C_1^{-1} M_{(CR)^{1+\phi(CR)}}(B(x,CR)\cap F).
\end{align}
For any $\varepsilon>0$, by $s = \dim_{\rm L}^{\phi} F $ and (\ref{lem4 eq3}), then for any sufficiently small $R > 0$, we have $M_{R^{1+\phi(R)}}(B(x,R)\cap F)\geq R^{-\phi(R)(s-\varepsilon/2)}$ and $C_1^{-1}>R^{\varepsilon\phi(R)/2}$. As a consequence, by (\ref{ll}) and monotonocity of $R^{-\phi(R)}$, it gives
\begin{align*}
M_{R^{1+\phi(R)}}(B(x,CR)\cap F)  & \geq C_1^{-1} \cdot (CR)^{-\phi(CR)(s-\varepsilon/2)} \\
& \geq R^{\varepsilon\phi(R)/2} \cdot R^{-\phi(R)(s-\varepsilon/2)} \geq R^{-\phi(R)(s-\varepsilon)},
\end{align*}
which gives the desired argument.
\end{proof}

\subsection{Dimensions of Moran constructions}

In this part, we study the dimension formula of Moran constructions. Dimension theory of Moran constructions are essential for the discussion on the properties of dimensions, and widely applied to the study on fractal geometry.

We first recall the definition of homogeneous Moran sets in $\RR^d$. Let $\mathcal{I} = \{0,1\}^d$. We denote $\mathcal{I}^* = \cup_{n=0}^\infty \mathcal{I}^n$, and denote $\mathcal{I}^0$ by the unique word $\{ \emptyset \}$ with length $0$. Given a sequence for contraction ratios $\mathbf{r} = \{r_n \}_{n=1}^\infty$ satisfy $0 < r_n \leq 1/2$ for all $n \in \NN$. For any $\mathbf{i} \in \mathcal{I}$, we write 
\[
S_{\mathbf{i}}^n(x) = r_n x + \lambda_{\mathbf{i}}^n
\]
where the $\lambda_{\mathbf{i}}^n  = ((\lambda_{\mathbf{i}}^n )^{(1)}, (\lambda_{\mathbf{i}}^n )^{(2)}, \dots , (\lambda_{\mathbf{i}}^n )^{(d)})\in \RR^d$ is
\[
(\lambda_{\mathbf{i}}^n )^{(j)} = 
\begin{cases}
0 & \text{ if } \mathbf{i}^{(j)} = 0  ;\\
1 - r_n & \text{ if } \mathbf{i}^{(j)} = 1 .
\end{cases} \qquad \forall 1 \leq j \leq d.
\]
For any word $\sigma = (\mathbf{i}_1, \cdots ,\mathbf{i}_n )\in \mathcal{I}^*$, we extend the contraction map as $S_{\sigma} = S_{\mathbf{i}_1} \circ \cdots \circ S_{\mathbf{i}_n}$. Thus we set
\[
M_n = \bigcup\limits_{\sigma \in \mathcal{I}^n}S_{\sigma}([0,1]^d) \qquad \text{ and } \qquad M:= M(\mathbf{r}) = \bigcap\limits_{n=1}^\infty M_n
\]
and refer $M$ to the homogeneous Moran set determined by $\mathbf{r}$.

We first investigate the explicit formulas for homogeneous Moran sets. Given $\mathbf{r} = \{r_n \}_{n=1}^\infty$, for any $n \geq 1$, we denote by $\rho(n)$ the side length of $n$-th cylinder as
\[
\rho(n) = r_1 r_2 \dots r_n. 
\]
It follows from the definition of $\phi$-lower dimension that for all n.b.d. space $F$,
\begin{equation}\label{phi dim formula}
\dim_{\rm L}^\phi F  = \liminf\limits_{ R \to 0 } \dfrac{\inf\limits_{x \in F} \log N_{R^{1+\phi(R)}}(B(x,R) \cap F)}{- \phi(R)\log R},
\end{equation}
by which we can get the explicit formula of $\phi$-lower Assouad dimension of homogeneous Moran sets.

We now discuss the dimension formula of homogeneous Moran sets. We first give some terms and notations. Given $R > 0$, we denote $l(R)$ be the largest integer satisfying 
\begin{equation}\label{l(R)}
\rho(l(R) + 1) < R \leq \rho(l(R))
\end{equation}
and $l_\phi(R)$ be the largest integer satisfying 
\begin{equation}\label{l_phi(R)}
\rho(l(R) + l_\phi(R) + 1) < R^{1+\phi(R)} \leq \rho(l(R)+ l_\phi(R)).
\end{equation}
The following result presents an explicit formula for the generalized lower Assouad dimension of homogeneous Moran constructions.

\begin{proposition}\label{homo_moran_formula}
Suppose $r_* : = \inf\limits_{n \geq 1} r_n  > 0$, then for any homogeneous Moran set $M \subseteq \mathbb{R}$, it gives
\[
\dim_{\rm L}^\phi M = \liminf_{n \to \infty} \dfrac{l_\phi(\rho(n)) \log 2}{\log \rho(n)/\rho(n + l_\phi(\rho(n)))}.
\]
\end{proposition}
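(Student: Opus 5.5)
Our plan is to compute $N_{R^{1+\phi(R)}}(B(x,R)\cap M)$ for the homogeneous Moran set $M\subseteq\RR$ up to constants that do not depend on $x$ or $R$, and substitute into formula (\ref{phi dim formula}). Here $d=1$, so every $n$-th level interval has length $\rho(n)$ and splits into exactly $2$ children of length $\rho(n+1)=r_{n+1}\rho(n)$. These children sit at the two ends of the parent, separated by a gap $(1-2r_{n+1})\rho(n)\ge 0$.

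\textbf{Step 1: counting.} Fix $R$, let $l=l(R)$ and $k=l_\phi(R)$ be as in (\ref{l(R)}) and (\ref{l_phi(R)}), and put $r=R^{1+\phi(R)}$. Since $r_*>0$, we have $\rho(l)\le R/r_*$, so $B(x,R)$ meets at most a bounded number of level-$l$ intervals. Because $x\in M$, the ball also contains at least one full level-$(l+1)$ interval, and that interval has length at least $r_*\rho(l)\ge r_* R$. Inside a level-$m$ interval, the number of level-$(l+k)$ descendants is $2^{l+k-m}$. By (\ref{l_phi(R)}), each such descendant has length between $r$ and $r/r_*$. Each descendant therefore needs $\approx 1$ balls of radius $r$, and distinct descendants at this level are $\ge$ a bounded number of $r$-balls apart on average. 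Hence
\[
N_r(B(x,R)\cap M)\approx 2^{k},
\]
with constants depending only on $r_*$. The lower bound comes from $M_r$ applied to a full level-$(l+1)$ subinterval, together with (\ref{doubling property}). The estimate is then uniform in $x$, so the infimum over $x$ changes nothing.

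\textbf{Step 2: denominators.} By (\ref{l(R)}) and (\ref{l_phi(R)}), $-\phi(R)\log R=\log(R/r)$ equals $\log\bigl(\rho(l)/\rho(l+k)\bigr)+O(\log(1/r_*))$. Since $\phi(R)\log(1/R)\to\infty$, this additive error is negligible. It follows that
\[
\dim_{\rm L}^\phi M=\liminf_{R\to0}\frac{k\log2+O(1)}{\log\bigl(\rho(l)/\rho(l+k)\bigr)+O(1)},
\]
and the $O(1)$ in the numerator is negligible because $k\log 2\gtrsim$ the denominator. This last point uses $r_n\le 1/2$ and also $k\to\infty$: the denominator is at most $k\log(1/r_*)$ and tends to infinity.

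\textbf{Step 3: replacing $R$ by $\rho(n)$.} This is the main obstacle. The formula is stated along $R=\rho(n)$ with $l_\phi(\rho(n))$, whereas Step 2 has general $R\in(\rho(l+1),\rho(l)]$. Lemma \ref{lem3} handles this. For $R$ in this range, $R\ge r_*\rho(l)$, so $\rho(l)^{-\phi(\rho(l))}$ and $R^{-\phi(R)}$ agree up to the factor $r_*^{-M}$. Consequently $R^{1+\phi(R)}$ and $\rho(l)^{1+\phi(\rho(l))}$ differ by at most a bounded factor. That factor is bounded using monotonicity of $\phi$ together with $\rho(l)\le R/r_*$. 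It follows that $l_\phi(R)$ and $l_\phi(\rho(l))$ differ by a bounded number of levels, with the bound depending on $r_*$ and $M$. Both the numerator and the denominator therefore change by $O(1)$. These changes disappear in the liminf by the argument of Step 2, which shows that the liminf over $R$ equals the liminf over $n=l(R)$ and gives the claimed formula.
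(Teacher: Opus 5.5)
Your proposal is correct and follows the paper's own argument. The paper likewise counts $N_{R^{1+\phi(R)}}(B(x,R)\cap M)\approx 2^{l_\phi(R)}$, using the full level-$(l(R)+1)$ cylinder inside the ball against the boundedly many level-$l(R)$ cylinders meeting it, and sandwiches $R^{-\phi(R)}$ between $\rho(l(R)+1)/\rho(l(R)+l_\phi(R))$ and $\rho(l(R))/\rho(l(R)+l_\phi(R)+1)$. Your Step 3, which uses Lemma \ref{lem3} to pass from arbitrary $R$ to $R=\rho(n)$, spells out a step the paper leaves implicit when it says these estimates ``directly give'' the formula.
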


\begin{proof}

 This is a direct result from the definition, but we state the proof here for completeness. For any $x\in M$ and sufficiently small $R >0 $, the ball $B(x,R)$ intersects at most two cylinders of level $l(R)$, and contains at least one cylinder of $l(R)+1$. For any level $k \in \mathbb{N}^+$, we denote $I_k$ by the $k$-th cylinder of $M$. Thus, by (\ref{l(R)}) and (\ref{l_phi(R)}), it gives
\begin{equation}\label{p1 eqn1}
N_{\rho(l(R)+l_{\phi}(R))}(I_{l(R)+1}) \leq \inf_{x\in M} 
 N_{R^{1+\phi(R)}}(B(x,R)\cap M) \leq 2 N_{\rho(l(R)+l_{\phi}(R)+1)}(I_{l(R)})
\end{equation}
and
\begin{equation}\label{p1 eqn2}
\frac{\rho(l(R)+1)}{\rho(l(R)+l_{\phi}(R))}\leq R^{-\phi(R)}\leq\frac{\rho(l(R))}{\rho(l(R)+l_{\phi}(R)+1)}.
\end{equation}

It is clear that both sides of (\ref{p1 eqn1}) are comparable to $2^{l_\phi(R)}$, and by $r_* > 0 $, both sides of (\ref{p1 eqn2}) are comparable to $\frac{\rho(l(R))}{\rho(l(R)+l_{\phi}(R))}$ respectively. This directly gives the desired argument.
\end{proof}

\subsection{General properties of generalised lower dimensions}

In this section, we focus on comparing generalized lower Assouad dimensions through dimension functions. For some n.b.d. spaces $F$, like self-similar sets or self-affine sets, we always have for any dimension functions $\phi$ and $\psi$ satisfying $\limsup_{R \to 0} \frac{\phi(R)}{\psi(R)} < 1$, 
$
\lad^\phi F \leq \lad^\psi F,
$
and for Ahlfors-regular sets $F$, we also have 
$
\lad^\phi F = \lad^\psi F.
$
However, by the dimension formulae of homogeneous Moran sets, the following examples show that the ordering of dimensions does not necessarily correspond to the ordering of their dimension functions.


\begin{example}\label{example 1}
Let $\phi,\psi$ be any two different dimension function satisfying $\liminf\limits_{R \to 0} \frac{\phi(R)}{\psi(R)} < 1 $, then there exist a compact set $M \subseteq \mathbb{R}$ such that 
\[
\lad^\phi M < \lad^\psi M.
\]
\end{example}

\begin{proof}
It follows from $\liminf\limits_{R \to 0} \frac{\phi(R)}{\psi(R)} < 1 $ that there exist some constant $\varepsilon > 0$ such that there exist a sequence $\{R_n \}_{n=1}^\infty$ satisfying that 
\[
\frac{\phi(R_n)}{\psi(R_n)} < 1-\varepsilon.
\]
By picking the elements of $\{R_n \}_{n=1}^\infty$, we assume that $\{R_n \}_{n=1}^\infty$ satisfying for all $n \geq 1$,
\[
R_{n+1} < \min\{R_n^{1+\psi(R_n)}/16, R_n^{2n} \}.
\]
We next apply the induction to construct the homogeneous Moran set $M(\mathbf{r}) \subseteq \mathbb{R}$ determined by $\mathbf{r}: = \{r_n \}_{n=1}^\infty$. Given $1  <  \alpha < \infty$, let $r_0 = 1, r_1 = R_1$, then by (\ref{l(R)}-\ref{l_phi(R)}), for any $n \geq 1$, we define
\[ r_k = 
\begin{cases}
2^{-\alpha} & l(R_n) + 1 \leq  k \leq l(R_n) + l_{\phi}(R_n); \\ 
2^{-1} & l(R_n) + l_{\phi}(R_n) + 1 \leq  k \leq l(R_{n + 1}).
\end{cases}
\]
Then it follows from Proposition \ref{homo_moran_formula} that
\[
\dim_{\rm L}^\phi M = \liminf_{n \to \infty}  \frac{l_\phi(R_n) \cdot \log 2}{\log \rho(l(R_n))/\rho(l(R_n)+ l_\phi(R_n))} = \frac{1}{\alpha}.
\]
Note that for all sufficiently large $n$, $\frac{\phi(R_n)}{\psi(R_n)} < 1-\varepsilon$, thus it follows from the definition of $l_\phi(R_n)$ and $l_\psi(R_n)$ that
\[
R_n^{-\phi(R_n)} \approx 2^{\alpha \cdot l_\phi(R_n)} \quad \text{ and } \quad  R_n^{-\psi(R_n)} \approx 2^{-l_{\psi}(R_n) - (\alpha-1) l_{\phi}(R_n)}.
\]
Hence there exist a constant $c_0 > 1$ such that for all sufficiently large $n$, $l_\psi(R_n) > c_0 \cdot l_\phi(R_n)$, which gives 
\begin{align*}
\lad^\psi F  & = \liminf_{n \to \infty} \dfrac{l_\psi(R_n) \log 2}{\log \rho(l(R_n))/\rho(l(R_n) + l_\psi(R_n))} \\
& = \liminf_{n \to \infty} \frac{l_\psi(R_n)}{l_\psi(R_n) + (\alpha - 1) \cdot l_\phi(R_n)} > \frac{1}{\alpha} = \lad^\phi F
\end{align*}
as desired.
\end{proof}

\begin{example}\label{example 2}
Let $\phi$ be a dimension function, let 
$\psi(R) = 3 \phi(R)/2$,
then there exist a compact set $M \subseteq \mathbb{R}$ such that 
\[
\lad^\phi M > \lad^\psi M.
\]
\end{example}
\begin{proof}
We now introduce the construction of the homogeneous Moran set $M \subseteq \mathbb{R} $ by induction. It follows from the choice of $\phi$ and $\psi$ that there exist a sequence $\{ R_n \}_{n=1}^\infty$ satisfying that for all $n\in \mathbb{N}$,
\[
R_{n+1} < \min\{R_n^{1+\psi(R_n)}/16, R_n^{2n} \}.
\]
Given $1 < \alpha < \infty$  and let $r_1 = R_1$, $l(R_1) = 1$. By (\ref{l(R)}-\ref{l_phi(R)}), for any $n \geq 1$, we define 
\[ r_k = 
\begin{cases}
2^{-\alpha} & l(R_n) + 1 \leq  k \leq l(R_n) + l_{\phi/2}(R_n); \\ 
2^{-1} & l(R_n) + l_{\phi/2}(R_n) + 1 \leq  k \leq l(R_n) + l_{\phi}(R_n); \\ 
2^{-\alpha} & l(R_n) + l_{\phi}(R_n) + 1 \leq  k \leq l(R_n) + l_{\psi}(R_n) ; \\
2^{-1} & l(R_n) + l_{\psi}(R_n) + 1 \leq  k \leq l(R_{n + 1}).
\end{cases}
\]
Similar to the proof of Example \ref{example 1}, by Proposition \ref{homo_moran_formula}, we obtain 
\[
\frac{\alpha + 2}{3\alpha} = \dim_{\rm L}^\psi M  < \dim_{\rm L}^\phi M = \frac{\alpha+1}{2\alpha}.
\]
The computation is similar to the proof of Example \ref{example 1}, thus we omit the details to readers.
\end{proof}

\section{Proof of main results.}

\subsection{Proof of Theorem 1: Equivalence}

The following result estimates the error between two different generalised lower dimension related to the quotient of the dimension functions.

\begin{proposition}\label{prop 2}
Suppose $\phi, \psi$ be two dimension functions, if there exist $\varepsilon > 0$ such that
\[
1 - \varepsilon \leq \liminf_{R \to 0} \frac{\phi(R)}{\psi(R)} \leq  \limsup_{R \to 0} \frac{\phi(R)}{\psi(R)} \leq 1+\varepsilon,
\]
then for any n.b.d. space $F$ with doubling constant $C$, it gives
\[
\left| \lad^\phi F - \lad^\psi F \right| \leq \varepsilon \cdot (1+ 2 \log_2 C + \varepsilon).
\]
\end{proposition}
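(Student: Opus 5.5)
The plan is to work directly with the liminf formula (\ref{phi dim formula}). For a fixed small $R$ I write $L=\log(1/R)$ and
\[
f(r)=\inf_{x\in F}\log N_r(B(x,R)\cap F),
\]
and then compare $f(R^{1+\phi(R)})/(\phi(R)L)$ with $f(R^{1+\psi(R)})/(\psi(R)L)$ scale by scale. Taking $\liminf_{R\to0}$ at the end gives the claim. By hypothesis, for any $\varepsilon'>\varepsilon$ and all sufficiently small $R$, the ratio $a(R):=\phi(R)/\psi(R)$ lies in $[1-\varepsilon',1+\varepsilon']$. I let $\varepsilon'\downarrow\varepsilon$ only at the very end. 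The ordering of $\phi(R)$ and $\psi(R)$ may change from scale to scale, but the estimates below are symmetric, so nothing is lost.

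\textbf{Step 1: two elementary bounds from doubling.} For $r'<r$ I use two facts. First, monotonicity gives $N_{r'}(B(x,R)\cap F)\ge N_r(B(x,R)\cap F)$. Second, iterating the doubling property (as in Lemma~\ref{upper bound estimate}) gives
\[
N_{r'}(B(x,R)\cap F)\le N_r(B(x,R)\cap F)\cdot\sup_y N_{r'}(B(y,r)\cap F)\le C^{\,1+\log_2(r/r')}\,N_r(B(x,R)\cap F).
\]
With $r=R^{1+\min(\phi,\psi)}$ and $r'=R^{1+\max(\phi,\psi)}$ we have $\log_2(r/r')=|\phi(R)-\psi(R)|L/\log2$. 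Taking infima over $x$, this yields
\[
|f(R^{1+\phi(R)})-f(R^{1+\psi(R)})|\le \log_2C\cdot|\phi(R)-\psi(R)|\,L+\log C.
\]
Applying the same covering bound with $r=R$ also shows
\[
f(R^{1+\phi(R)})\le \log_2C\cdot\phi(R)L+O(1),
\]
so every ratio $d_\phi(R):=f(R^{1+\phi(R)})/(\phi(R)L)$ is at most $\log_2C+o(1)$. In particular $\lad^\phi F,\lad^\psi F\le\log_2 C$.

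\textbf{Step 2: compare the ratios.} Write $d_\psi(R)=a(R)\,f(R^{1+\psi(R)})/(\phi(R)L)$. Inserting the bound from Step 1 gives
\[
|d_\psi(R)-a(R)\,d_\phi(R)|\le a(R)\log_2C\,\frac{|\phi-\psi|}{\phi}+o(1)=\log_2C\,|1-a(R)|+o(1).
\]
Combining this with $|a(R)d_\phi(R)-d_\phi(R)|\le|1-a(R)|\,d_\phi(R)\le \varepsilon'(\log_2C+o(1))$, I obtain
\[
|d_\psi(R)-d_\phi(R)|\le \varepsilon'\cdot 2\log_2C+o(1).
\]
If I instead normalise by $\psi$ rather than $\phi$, so that $|\phi-\psi|/\psi\le\varepsilon'$ and factors $(1+\varepsilon')$ appear, the extra terms are exactly what produce the $\varepsilon(1+\varepsilon)$ part of the stated bound $\varepsilon(1+2\log_2C+\varepsilon)$. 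Since the pointwise difference $|d_\psi-d_\phi|$ is uniformly small, the liminfs differ by at most the same amount. Letting $\varepsilon'\downarrow\varepsilon$ then finishes the proof.

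\textbf{Main obstacle.} The delicate part is the constant bookkeeping in Step 2, not the idea itself. One must keep the additive $O(1)$ terms from doubling negligible, which holds because $\phi(R)L\to\infty$ for a dimension function. One must also decide which of $\phi,\psi$ to normalise by, so that the error comes out as $\varepsilon(1+2\log_2C+\varepsilon)$ rather than a worse expression such as $\varepsilon/(1-\varepsilon)$. I would first try to bound $|\phi-\psi|/\psi$ by $\varepsilon'$ and $\phi/\psi$ by $1+\varepsilon'$, and accept any bound of the same order if the exact constant proves awkward.
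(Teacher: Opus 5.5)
Your proof is correct and is essentially the paper's argument. Both compare covering numbers at the scales $R^{1+\phi(R)}$ and $R^{1+\psi(R)}$, using monotonicity in one direction and the doubling factor $R^{-|\phi(R)-\psi(R)|\log_2 C}$ in the other, and then compare the normalised exponents. The difference is packaging: you work pointwise in $R$ through the liminf formula, treating both orderings of $\phi,\psi$ at once and letting $\varepsilon'\downarrow\varepsilon$ cleanly. The paper instead argues along a near-extremal sequence for one ordering and says the other case is similar.

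Note that your Step~2 already gives $|\lad^\phi F-\lad^\psi F|\le 2\varepsilon\log_2 C\le \varepsilon(1+2\log_2 C+\varepsilon)$. So the speculative remark about renormalising by $\psi$ to produce the $\varepsilon(1+\varepsilon)$ term is unnecessary. That extra term comes from the paper reusing $\varepsilon$ as the approximation slack in the definitions of the two dimensions, which your argument avoids.
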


\begin{proof}
It follows from the assumption that for all sufficiently small $0 < R < 1$, it gives
\[
1-\varepsilon \leq  \frac{\phi(R)}{\psi(R)} \leq 1+\varepsilon.
\]
Given sufficiently small $R$, if $\psi(R) > \phi(R)$, then by Lemma \ref{lower bound estimate}, it gives
\[
N_{R^{1+\psi(R)}}(B(x,R) \cap F) \leq  N_{R^{1+\phi(R)}}(B(x,R) \cap F) \cdot \sup_{y \in F} N_{R^{1+\psi(R)}}(B(y,R^{1+\phi(R)}) \cap F),
\]
thus, it follows from the doubling constant that 
\[
\sup_{y \in F} N_{R^{1+\psi(R)}}(B(y,R^{1+\phi(R)}) \cap F) \leq \left(R^{\phi(R) - \psi(R)}  \right)^{\log_2 C} = R^{(\phi(R) - \psi(R))\log_2 C},
\]
and by the definition of $\dim_{\rm L}^\phi F$ and $\dim_{\rm L}^\psi F$, it gives
\[
R^{-\psi(R) \cdot (\dim_{\rm L}^\psi F - \varepsilon) + (\psi(R) - \phi(R)) \cdot \log_2 C} \leq R^{-\phi(R) \cdot (\dim_{\rm L}^\phi F + \varepsilon)},
\]
as a consequence, we obtain that 
\[
-\psi(R) \cdot (\dim_{\rm L}^\psi F - \varepsilon) + (\psi(R) - \phi(R)) \cdot \log_2 C  \geq -\phi(R) \cdot (\dim_{\rm L}^\phi F +\varepsilon) 
\]
and then since $\dim_{\rm L}^\psi F \leq \log_2 C$ and $\psi(R) - \phi(R) \leq \varepsilon \cdot \psi(R)$ , we obtain
\[
\dim_{\rm L}^\psi F - \dim_{\rm L}^\phi F \leq \varepsilon \cdot (\varepsilon + 2\log_2 C + 1).
\]
 The result for the converse part holds by a similar proof.
\end{proof}
Now we go to the proof of Theorem 1.
\begin{proof}[Proof of Theorem 1]

(3) to (2): This directly holds. \\
(2) to (1): This follows from Example 1. \\
(1) to (3): This is a natural corollary of Proposition \ref{prop 2}.
\end{proof}

\subsection{Proof of Theorem 2: A tale of two generalised lower Assouad dimensions}

We first discuss the general property of rate windows. The following result shows the general bounds estimates of the rate windows for given dimension functions, and it is a generalisation of the continuity estimates of lower Assouad spectrum in \cite{CWC2020, HT2019}.

\begin{proposition}\label{prop 4}
Let $\phi$ be a dimension function. Let $F$ be a n.b.d. space and let $\varphi(\alpha) := \lad^{\phi_\alpha} F$. Then for any $0 < \alpha < \beta < \infty$,
\[
\left( \frac{1}{\alpha} - \frac{1}{\beta} \right) \varphi\left(\frac{\alpha \beta}{\beta - \alpha}\right)\leq \frac{1}{\alpha} \varphi(\alpha) - \frac{1}{\beta} \varphi(\beta) \leq c \left( \frac{1}{\alpha} - \frac{1}{\beta} \right).
\] 
\end{proposition}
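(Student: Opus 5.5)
The plan is to compare the two scales $R_\alpha=R^{1+\phi(R)/\alpha}$ and $R_\beta=R^{1+\phi(R)/\beta}$ inside a ball $B(x,R)$, with $R_\alpha<R_\beta<R$. The basic identity is
\[
\log\frac{R_\beta}{R_\alpha}=\phi(R)\Big(\frac1\alpha-\frac1\beta\Big)\log\frac1R=\frac{\phi(R)}{\gamma}\log\frac1R,\qquad \gamma:=\frac{\alpha\beta}{\beta-\alpha}.
\]
So passing from $R_\beta$ down to $R_\alpha$ is exactly one $\phi_\gamma$-step measured with the exponent $\phi(R)\log(1/R)$ of the outer ball. Throughout I use formula (\ref{phi dim formula}) for $\varphi(\cdot)=\lad^{\phi_\cdot}F$, in which the denominator for $\phi_\theta$ is $\frac{\phi(R)}{\theta}\log\frac1R$. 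I take $c=\log_2 C$, where $C$ is the doubling constant of $F$.

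\emph{Upper bound.} By Lemma \ref{upper bound estimate},
\[
N_{R_\alpha}(B(x,R)\cap F)\lesssim N_{R_\beta}(B(x,R)\cap F)\cdot\sup_y N_{R_\alpha}(B(y,R_\beta)\cap F),
\]
and doubling bounds the last factor by $(R_\beta/R_\alpha)^{c}=R^{-\phi(R)c/\gamma}$. Fix $\varepsilon>0$, and choose $x$ and a sequence $R\to0$ along which the infimum over $x$ and the liminf defining $\varphi(\beta)$ are almost attained, so that $N_{R_\beta}(B(x,R)\cap F)\le R^{-\phi(R)(\varphi(\beta)+\varepsilon)/\beta}$. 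For all small $R$ we also have $N_{R_\alpha}(B(x,R)\cap F)\ge R^{-\phi(R)(\varphi(\alpha)-\varepsilon)/\alpha}$. Taking logarithms, dividing by $\phi(R)\log(1/R)\to\infty$ (which absorbs the implicit constant), and letting $\varepsilon\to0$ gives
\[
\frac{\varphi(\alpha)}{\alpha}\le\frac{\varphi(\beta)}{\beta}+c\Big(\frac1\alpha-\frac1\beta\Big).
\]

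\emph{Lower bound.} Apply Lemma \ref{lower bound estimate} with the two scales $R_\beta>R_\alpha$, using \eqref{doubling property} to pass between $M$ and $N$:
\[
M_{R_\alpha}(B(x,R)\cap F)\ge M_{R_\beta}(B(x,R-R_\beta)\cap F)\cdot\inf_y M_{R_\alpha}(B(y,R_\beta-R_\alpha)\cap F).
\]
The first factor is handled by Lemma \ref{lem4} with a constant $C'<1$ (here $R-R_\beta\ge R/2$). This gives at least $R^{-\phi(R)(\varphi(\beta)-\varepsilon)/\beta}$. For the second factor I want at least $R^{-\phi(R)(\varphi(\gamma)-\varepsilon)/\gamma}$. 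Combining the two bounds, taking the liminf, and letting $\varepsilon\to0$ would give
\[
\frac{\varphi(\alpha)}{\alpha}\ge\frac{\varphi(\beta)}{\beta}+\frac{\varphi(\gamma)}{\gamma},
\]
which is the left inequality since $1/\gamma=1/\alpha-1/\beta$.

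\emph{Main obstacle.} The hard step is the second factor. The definition of $\varphi(\gamma)$ applied to the ball radius $\rho=R_\beta$ controls the scale $\rho^{1+\phi(\rho)/\gamma}$, not $R_\alpha=\rho\,R^{\phi(R)/\gamma}$. Since $t\mapsto\phi(t)\log(1/t)$ increases as $t\to0$, the intrinsic scale $\rho^{1+\phi(\rho)/\gamma}$ is finer than $R_\alpha$, so the mismatch runs in the unfavourable direction. My first attempt is to show this mismatch is negligible on the logarithmic scale, namely
\[
\phi(R_\beta)\log\frac1{R_\beta}=\big(1+o(1)\big)\,\phi(R)\log\frac1R.
\]
For this I would use that $\phi$ is bounded and monotone, together with Lemma \ref{lem3}, which gives the doubling of $t\mapsto t^{-\phi(t)}$. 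If this holds, I choose $\rho'\le R_\beta$ with $\rho'^{-\phi(\rho')/\gamma}$ comparable to $R^{-\phi(R)/\gamma}$ up to a factor $R^{-o(\phi(R))}$. I then apply the definition of $\varphi(\gamma)$ at $\rho'$ and use Lemma \ref{lem4} to handle the radius $R_\beta-R_\alpha$. If the asymptotic identity fails for general $\phi$, I would instead insert a short chain of intermediate scales in Lemma \ref{lower bound estimate} and pay an $\varepsilon$-loss at each step via Lemma \ref{lem3}.
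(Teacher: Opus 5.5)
\textbf{Upper bound.} Your upper bound is correct and is the paper's argument: Lemma~\ref{upper bound estimate} plus doubling, with $c=\log_2 C$.

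\textbf{Lower bound, case $\phi(R)\to0$.} Your lower bound uses the same two-scale decomposition via Lemma~\ref{lower bound estimate} as the paper. You have located the real difficulty: the paper's proof simply asserts $\inf_y M_{R_\alpha}(B(y,R_\beta)\cap F)\gtrsim R^{-\phi(R)s_3/\gamma}$ without justification. Your fix settles half of the problem. Since
\[
\phi(R)\log\tfrac1R\;\le\;\phi(R_\beta)\log\tfrac1{R_\beta}\;\le\;\Big(1+\tfrac{\phi(R)}{\beta}\Big)\phi(R)\log\tfrac1R ,
\]
your asymptotic identity holds if and only if $\phi_0:=\lim_{R\to0}\phi(R)=0$. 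In that case the argument closes once you add the coarsening step you left implicit. With $\rho=R_\beta$, the definition of $\varphi(\gamma)$ bounds the count at the finer scale $\rho^{1+\phi(\rho)/\gamma}\le R_\alpha$. Passing to the coarser scale $R_\alpha$ costs, by doubling, at most $c\log\big(R_\alpha/\rho^{1+\phi(\rho)/\gamma}\big)\le \frac{c}{\beta\gamma}\phi(R)^2\log\frac1R=o\big(\phi(R)\log\frac1R\big)$.

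\textbf{The gap: the case $\phi_0>0$.} Your fallback (a chain of intermediate scales) cannot close it, because the left inequality is false there.
\begin{itemize}
\item Take $\phi\equiv1$. This is admissible: the paper itself uses constant $\phi$, and $1+1/\log\log(1/R)$ gives the same values if you insist on strict monotonicity.
\item Take $\alpha=\frac12$ and $\beta=1$, so $\gamma=1$. The claim then reads $\varphi(1)\le\varphi(\frac12)$, i.e.\ $\dim_{\rm L}^{1/2}F\le\dim_{\rm L}^{1/3}F$ for the lower Assouad spectrum.
\item Let $M\subseteq\RR$ be a homogeneous Moran set with ratio $\frac12$ at every level, except on the stretches $t\in[T_k,e^{\ell}T_k]\cup[e^{\Delta}T_k,e^{\Delta+\ell}T_k]$, where the ratio is $2^{-1/\delta}$. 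Here $t=\log_2(1/\text{scale})$, $(T_k)$ is very sparse, and $\ln 2+\ell\le\Delta$, $\Delta+\ell\le\ln 3$ (e.g.\ $\ell=0.1$, $\Delta=0.8$).
\item A window $[t,2t]$ meets at most one stretch, while a window $[t,3t]$ can contain both. With $\kappa=1-\delta$, one computes
\[
\varphi(1)=1-2\kappa(1-e^{-\ell}),\qquad \varphi(\tfrac12)\le 1-\tfrac32\kappa(1-e^{-\ell})(1+e^{-\Delta}).
\]
The second is strictly smaller precisely because $e^{-\Delta}>\frac13$.
\end{itemize}

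\textbf{What is actually true.} Your decomposition, run with the intrinsic parameter of the step from radius $R_\beta$ to scale $R_\alpha$, proves the inequality with $\varphi(\gamma)$ replaced by $\varphi\big(\gamma(1+\phi_0/\beta)\big)$. This coincides with the stated inequality only when $\phi_0=0$. So the proposition, and the paper's proof, need the extra hypothesis $\phi(R)\to0$. Under that hypothesis your argument is complete.
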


\begin{proof}

We first discuss the lower bound estimates. For any $s_1 > \lad^{\phi_\alpha} F$, then there exist a monotonic decreasing sequence of $\{R_n\}_{n=1}^\infty$ satisfying $R_n < 1$ for all $n$, and tending to $0$ as $n$ tends to infinity, and a sequence $\{x_n \}_{n=1}^\infty \subseteq F$ such that for all $n$, let $(R_n)_{\alpha} = R_n^{1 + \phi(R_n)/\alpha}$ and $(R_n)_{\beta} = R_n^{1 + \phi(R_n)/\beta}$ respectively, then
\[
 M_{(R_n)_\alpha}(B(x_n,R_n) \cap F) \leq R_n^{-\phi_\alpha(R_n) \cdot s_1}.
\]

Let $s_2< \lad^{\phi_\beta} F$ and $s_3 < \lad^{\phi_{\alpha \beta/(\beta - \alpha)}} F$, then by the doubling property and $\phi$ monotonic, it gives
\begin{align*}
M_{16 (R_n/2)_\beta}(B(x_n,R_n/2) \cap F) & \approx M_{(R_n/2)_\beta}(B(x_n,R_n/2) \cap F)  \quad \text{(by doubling property)}  \\
&  \gtrsim (R_n/2)^{- \phi(R_n/2)s_2/\beta}\\
&  \gtrsim R_n^{- \phi(R_n/2)s_2/\beta} \\
&  \gtrsim R_n^{-  \phi(R_n) s_2/\beta} \quad  \text{(since  $\phi(R) > \phi(R/2) > 0$ )}
\end{align*}
Moreover, for all $y \in F$
\begin{align*}
M_{(R_n)_\alpha}(B(y,(R_n)_\beta) \cap F) & \gtrsim R^{-\phi(R)\cdot (1/\alpha - 1/\beta) s_3} \\ 
&  \gtrsim R^{-\phi(R)\cdot(1/(1/ (1/\alpha - 1/\beta))) s_3} \\
& \gtrsim R^{-\phi(R)\cdot(1/ (\alpha \beta/ (\beta - \alpha))) s_3}
\end{align*}
Note that for all sufficiently small $R > 0$ and any $0 < \alpha < \infty$,
\[
R - 2 R_\alpha > R/2.
\]
and for sufficiently small $R > 0$, $2^{\phi(R_n)/\alpha} < 8 $, and 
\[
0 \leq \dfrac{(R_n)_\alpha}{(R_n/2)_\alpha} \leq  2^{1+\phi(R_n/2)/\alpha} < 8.
\]
Hence by (\ref{doubling property}) and Lemma \ref{lower bound estimate}, we obtain
\begin{align*}
N_{(R_n)_\alpha}(B(x_n, R) \cap F) & \gtrsim N_{2(R_n)_\beta}(B(x_n, R_n - 2(R_n)_\beta) \cap F)  \\
& \qquad \cdot \inf_{y} M_{(R_n)_\alpha}(B(y,2(R_n)_\beta - (R_n)_{\alpha}) \cap F) \\
& \gtrsim N_{16(R_n/2)_\beta}(B(x_n, R_n/2) \cap F) \cdot \inf_{y} M_{(R_n)_\alpha}(B(y,(R_n)_\beta) \cap F),
\end{align*}
which implies that
\[
{- \frac{\phi(R)}{\beta} \cdot  s_2 - \frac{\phi(R)}{\alpha \beta/ (\beta - \alpha)} \cdot  s_3 \geq - \frac{\phi(R)}{\alpha} s_1 } 
\]
and
\[
\frac{1}{\beta} \cdot \varphi(\beta) + \frac{\beta - \alpha}{\alpha \beta} \cdot \varphi\left(\frac{\alpha \beta}{\beta - \alpha}\right) \leq 
\frac{1}{\alpha} \cdot \varphi(\alpha), 
\]
which is equivalent to
\[
\frac{1}{\alpha} \cdot \varphi(\alpha) -  \frac{1}{\beta} \cdot \varphi(\beta) \geq  \frac{\beta - \alpha}{\alpha \beta} \varphi\left(\frac{\alpha \beta}{\beta - \alpha}\right) .
\]

We next discuss the upper bound estimates. Let $s_1 < \lad^{\phi_\alpha} F $ and $s_2 > \lad^{\phi_\beta} F$, then there exist a monotonic decreasing sequence of $\{R_n\}_{n=1}^\infty$ satisfying $R_n < 1$ for all $n$, and tending to $0$ as $n$ tends to infinity, and a sequence $\{x_n \}_{n=1}^\infty \subseteq F$ such that for all $n$, 
\[
N_{(R_n)_\beta}(B(x_n,R_n) \cap F) \leq R_n^{-\phi_\beta(R_n) \cdot s_2}.
\]
and then, by Lemma \ref{upper bound estimate}, it gives
\[
R_n^{-\phi_\alpha(R_n) \cdot s_1} \leq R_n^{-\phi_\beta(R_n) \cdot s_2}  \cdot R_n^{-\phi(R_n) \cdot \left( \frac{1}{\alpha} - \frac{1}{\beta} \right) \cdot \log_2 M},
\]
thus there exists a constant $c$ related to $C$ such that
\[
\frac{1}{\alpha} \cdot \varphi(\alpha)  - \frac{1}{\beta} \cdot \varphi(\beta) \leq c \cdot \left( \frac{1}{\alpha} - \frac{1}{\beta} \right).
\]
\end{proof}


\begin{remark}
It follows from Proposition 3 that the generalised lower dimension of the rate windows is a continuous function.
\end{remark}

Now we gives the difference estimates between disjoint dimension function as follows.

\begin{proposition}\label{prop4}
Given a n.b.d. space $F$. Let $\psi$ be a dimension function. For any $\eps > 0$, there exist $\eta$ related to the doubling constant, $\eps, \psi$ such that for all other dimension functions $\phi$ satisfying $\limsup\limits_{R \to 0} \frac{\phi(R)}{\psi(R)} < \eta$, then we obtain
\[
\lqad^\psi F \geq \lad^\phi F - 2\eps.
\]
\end{proposition}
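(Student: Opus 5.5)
The plan is to fix $s<\lad^\phi F$ and show directly from the definition of $\lqad^\psi F$ that $N_r(B(x,R)\cap F)\gtrsim (R/r)^{s-2\eps}$ for all $x\in F$ and all $r\le R^{1+\psi(R)}$ with $R$ small. Since $s$ is arbitrary, this gives the claim.

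\textbf{Chain of scales.} Given $R$ and $r\le R^{1+\psi(R)}$, I define the $\phi$-chain
\[
R_0=R,\qquad R_{j+1}=R_j^{1+\phi(R_j)} .
\]
I let $k$ be the largest index with $R_k\ge r$. I then apply Lemma \ref{lower bound estimate} to the scales $R_1>\dots>R_k$, with a final step to $r$. To absorb the $R_j-R_{j+1}$ shrinkage of the balls, I would use halved scales, or equivalently Lemma \ref{lem4} with $C=1/2$. At each full step this gives
\[
\inf_y M_{R_{j+1}}\bigl(B(y,R_j/2)\cap F\bigr)\ \ge\ R_j^{-\phi(R_j)(s-\eps)}=(R_j/R_{j+1})^{s-\eps},
\]
up to a doubling constant $c$ per step, which comes from \eqref{doubling property} and Lemma \ref{lem3}. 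For the last, incomplete step from $R_k$ to $r$ I use only the trivial bound $M\ge 1$.

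\textbf{Absorbing the constants.} By property (2) of dimension functions, $R_j^{-\phi(R_j)}$ increases as $R_j\downarrow 0$, so every ratio satisfies $R_j/R_{j+1}\ge R^{-\phi(R)}\to\infty$. For $R$ small this gives $c\le (R_j/R_{j+1})^{\eps}$, so the $k$ multiplicative losses cost at most a further exponent $\eps$. Multiplying the factors then yields
\[
N_r(B(x,R)\cap F)\gtrsim (R/R_k)^{s-2\eps}.
\]
This step needs no hypothesis relating $\phi$ and $\psi$.

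\textbf{The tail, which is the main obstacle.} It remains to show $(R_k/r)^{s}\le (R/r)^{O(\eps)}$, i.e.\ that the incomplete last step is negligible; this is exactly where $\eta$ enters. Because $R_{k+1}<r\le R_k$,
\[
\log(R_k/r)\le \phi(R_k)\log(1/R_k)\le \phi(R)\log(1/r),
\]
using the monotonicity of $\phi$ and $R_k\ge r$. On the other hand, $r\le R^{1+\psi(R)}$ gives
\[
\log(R/r)\ \ge\ \frac{\psi(R)}{1+\psi(R)}\log(1/r).
\]
Hence
\[
\frac{\log(R_k/r)}{\log(R/r)}\le \frac{\phi(R)}{\psi(R)}\,\bigl(1+\psi(R)\bigr)\le \eta\,(1+M),
\]
where $M$ bounds $\psi$ near $0$. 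Since $s\le\log_2 C_{\mathrm{doub}}$, choosing
\[
\eta\le \eps/\bigl((1+M)\log_2 C_{\mathrm{doub}}\bigr)
\]
makes the tail loss at most $(R/r)^{\eps}$. Combining this with the previous step gives $N_r(B(x,R)\cap F)\gtrsim (R/r)^{s-2\eps}$ uniformly in $x$, and so $\lqad^\psi F\ge s-2\eps$; here $\eta$ depends only on $\eps$, $\psi$ and the doubling constant.

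The care needed is in bookkeeping the uniformity in $R$. Lemma \ref{lem4} is applied at every $R_j\le R$, and this is legitimate because all the $R_j$ lie below the threshold $R'$ once $R<R'$.
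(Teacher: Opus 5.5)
Your proposal is correct and follows essentially the same route as the paper. Both arguments use the $\phi$-chain $R_{j+1}=R_j^{1+\phi(R_j)}$, the Hare--Troscheit product bound with Lemma~\ref{lem4} on each full step, and a control of the incomplete last step via $\log(R_k/r)\le\phi(R)\log(1/r)$ against $\log(R/r)\ge\frac{\psi(R)}{1+\psi(R)}\log(1/r)$, which is exactly the paper's condition (\ref{prop4.0}).

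Your choice $\eta\le\eps/\bigl((1+M)\log_2 C_{\mathrm{doub}}\bigr)$ is cleaner than the paper's, whose $\eta$ as written depends on $s=\lad^\phi F$ (hence on $\phi$) and on $\psi(R)$. One small correction: as written your three $\eps$-losses add up to $s-3\eps$, so run the first stage with $\eps/2$ in place of $\eps$ to get $s-2\eps$.
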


\begin{proof}
Let $s:=\lad^\phi F$. If $s = 0$, the discussion is trivial, thus we consider the case of $s > 0$, which suffices to prove for any sufficiently small $\varepsilon > 0$, then for any sufficiently small $R >0$ and $r < R^{1+\psi(R)}$, it gives 
\[
N_r(B(x,R) \cap F) \geq \left(\frac{R}{r}\right)^{s-2\varepsilon},
\]
and the lower bound estimates rely on the properties of $\lad^\phi F$ and the approximation of $r$ also relies on the construction of $\left\{ R_i^{1+\phi(R_i)} \right\}_{i=1}^\infty$.

Given dimension function $\psi$, it follows from the L'Hospital's rule that for any $\eps > 0$, there exist $\eta$ such that
\[
\eta \leq \frac{\left(1-\frac{1}{1+\psi(R)} \right) \varepsilon}{\psi(R) (s- \varepsilon)},
\]
which yields for any dimension function $\phi$ satisying $\limsup\limits_{R \to 0} \frac{\phi(R)}{\psi(R)} < \eta$, it gives
\begin{equation}\label{prop4.0}
\left(\frac{1}{1+\psi(R)} -1 \right) \varepsilon + \phi(R)(s- \varepsilon) \leq \left(\frac{1}{1+\psi(R)} -1 \right) \varepsilon + \eta\psi(R)(s- \varepsilon) <  0.
\end{equation}

Fix dimension function $\phi$ and $\psi$, for any sufficiently small $\eps > 0$, it follows from the dimension function $\phi$ that there exist $R' > 0$ such that for all sufficiently small $R  < R'$, $\phi(R) \leq 1$ and
\begin{equation}\label{prop4.1}
R^{1+\phi(R)} \leq \varepsilon \cdot R.
\end{equation}
Given $R > 0$, we define a sequence $\{R_n\}_{n=1}^\infty$ by induction. Let $R_1 = R$, then $R_2=R_1^{1+\phi(R_1)}$. If $R_k$ is defined, then $R_{k+1}=R_k^{1+\phi(R_k)}$. 

Hence, for any dimension function $\phi$ satisfying (\ref{prop4.0}) and any $r \leq R^{1+\psi(R)}$, we define $N$ be the maximal integer related to $r$ such that
\[
R_N > r \geq R_{N+1} = R_N^{1+\phi(R_N)}.
\]
Thus by (\ref{doubling property}), Lemma \ref{lower bound estimate} and Lemma \ref{lem3} such that for any $x \in F$, it gives 
\begin{align}\label{prop4.2}
\nonumber N_r(B(x,R) \cap F) & \approx M_r(B(x,R) \cap F) \geq M_{R_N}(B(x,R) \cap F) \\
&  \geq M_{R_1}(B(x,R/2) \cap F)  \cdots \inf_{x'}  M_{R_{N}}(B(x',R_{N}/2) \cap F).
\end{align}
Hence, by Lemma \ref{lem3}, Lemma \ref{lem4} and (\ref{prop4.1}), it gives
\begin{equation}\label{prop4.3}
M_{R_i}(B(x, R_{i-1}/2) \cap F ) \geq \left(\frac{R_{i-1}}{R_i}\right)^{s-\varepsilon}.
\end{equation}
This implies that for any $R_{N+1} \leq r \leq R_N$, by (\ref{prop4.2}) and (\ref{prop4.3}), we obtain
\begin{align}\label{prop4.4}
\nonumber N_r(B(x,R) \cap F) & \geq \left(\frac{R}{R_1}\right)^{(s-\varepsilon)}   \left(\frac{R_1}{R_2}\right)^{(s-\varepsilon)}   \cdots   \left(\frac{R_{N-1}}{R_N}\right)^{(s-\varepsilon)} \geq \left(\frac{R}{R_N}\right)^{(s-\varepsilon)}  \\
& \geq \left(\frac{R}{r}\right)^{(s-\varepsilon)}   \left(\frac{r}{R_N}\right)^{(s-\varepsilon)} 
\end{align}
Note that $\phi(R)$ is monotonic decreasing, it gives
\begin{equation}\label{prop4.5}
\left(\frac{r}{R_N}\right)^{s-\eps} \geq \left(\frac{R_{N+1}}{R_N}\right)^{s-\eps} \geq R_N^{\phi(R_N)\cdot (s-\eps)} \geq r^{\phi(R)\cdot (s-\eps)}.
\end{equation}
As a consequence, by 
$
R/r \geq  r^{\left(\frac{1}{1+\psi(R)} -1 \right)},
$
(\ref{prop4.0}), (\ref{prop4.4}) and (\ref{prop4.5}), we obtain
\begin{align*}
N_r(B(x,R) \cap F) \geq  \left(\frac{R}{r}\right)^{s-2\varepsilon} r^{\left(\frac{1}{1+\psi(R)} -1 \right) \varepsilon} r^{\phi(R) (s-\eps)}  \geq  \left(\frac{R}{r}\right)^{s-2\varepsilon},
\end{align*}
which gives the desired argument.
\end{proof}

Based on the proof of Proposition \ref{prop4}, we have the following corollary.
\begin{corollary}\label{coro4}
	 If there exist some $x\in F, R>0, \theta>0$, and $s>0$ such that $$N_{R^{1+\theta}}(B(x,R)\cap F)<R^{-\theta s},$$ then for every $\varepsilon<s$, there exists $\eta>0$ related to $\varepsilon$ such that for any $\theta^*<\eta\theta$, there exist $x^*\in F$ and $ R^*\in (R^{1+\theta},R) $ satisfying
	$$N_{{R^*}^{(1+\theta^*)}}\brackets{B(x^*,R^*)\cap F}<{R^*}^{-\theta^*(s+\varepsilon)}.$$
\end{corollary}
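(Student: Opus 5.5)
We argue by contraposition, reusing the chain construction from the proof of Proposition \ref{prop4} at a single scale. Fix $x$, $R$, $\theta$, $s$ with $N_{R^{1+\theta}}(B(x,R)\cap F)<R^{-\theta s}$, and let $\varepsilon<s$. Given $\theta^*<\eta\theta$, with $\eta$ to be chosen, we suppose the conclusion fails. That is, for every $x^*\in F$ and every $R^*\in(R^{1+\theta},R)$,
\[
N_{{R^*}^{1+\theta^*}}\brackets{B(x^*,R^*)\cap F}\geq {R^*}^{-\theta^*(s+\varepsilon)}.
\]
We will then show $N_{R^{1+\theta}}(B(x,R)\cap F)\geq R^{-\theta s}$, which is a contradiction.

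First we build the chain. Set $R_1=R$ and $R_{k+1}=R_k^{1+\theta^*}$. Let $N$ be maximal with $R_N>r:=R^{1+\theta}$, so that $R_{N+1}\leq r<R_N$. Every $R_k$ with $k\leq N$ lies in $[R^{1+\theta},R]$, up to the endpoint $k=1$, which we handle by taking $R^*$ slightly below $R$ via Lemma \ref{lem3}. So the assumed lower bound applies at each scale. As in Lemma \ref{lem4}, the doubling property passes this bound from $N_{R_{k+1}}(B(y,R_k)\cap F)$ to $M_{R_{k+1}}(B(y,R_k/2)\cap F)$ at the cost of a constant factor. Then (\ref{doubling property}) together with Lemma \ref{lower bound estimate} give
\[
N_r(B(x,R)\cap F)\gtrsim \prod_{k=1}^{N-1} C_1^{-1}\left(\frac{R_k}{R_{k+1}}\right)^{s+\varepsilon}\gtrsim C_1^{-N}\left(\frac{R}{R_N}\right)^{s+\varepsilon}.
\]

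Next we compare $R/R_N$ with $R/r$, following (\ref{prop4.5}). We have $r/R_N\geq R_{N+1}/R_N=R_N^{\theta^*}\geq r^{\theta^*}$, so $(R/R_N)^{s+\varepsilon}\geq (R/r)^{s+\varepsilon}r^{\theta^*(s+\varepsilon)}$. Since $R/r=R^{-\theta}$, this yields the lower bound
\[
R^{-\theta(s+\varepsilon)}\cdot R^{(1+\theta)\theta^*(s+\varepsilon)}\cdot C_1^{-N}.
\]
The extra $\varepsilon$ in the exponent gains a factor $R^{-\theta\varepsilon}$, and this must absorb the two losses. The first loss, $R^{(1+\theta)\theta^*(s+\varepsilon)}$, is at least $R^{\eta\theta(1+\theta)(s+\varepsilon)}$. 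It is dominated by $R^{-\theta\varepsilon/2}$ as soon as $\eta(1+\theta)(s+\varepsilon)<\varepsilon/2$, which fixes $\eta$ in terms of $\varepsilon$, as in (\ref{prop4.0}).

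The main obstacle is the second loss, the constant factor $C_1^{-N}$ accumulated over the chain. The number of steps satisfies $(1+\theta^*)^{N}\approx 1+\theta$, so $N\approx\log(1+\theta)/\log(1+\theta^*)$, which grows as $\theta^*\to 0$. I would therefore run the chain with the Hare--Troscheit inequality directly in the packing formulation, applied to balls of radius $R_k-R_{k+1}$. This avoids the halving and hence the constant $C_1$, except for a bounded number of boundary terms. If a residual constant $c^N$ remains, I would handle it by requiring $R$ small relative to $\theta^*$, so that $c^N\leq R^{-\theta\varepsilon/2}$. This may force $\eta$ to depend on $R$ as well, and I expect that dependence to be the delicate point of the argument. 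With both losses controlled, the bound becomes $N_r(B(x,R)\cap F)\geq R^{-\theta s}$, giving the contradiction.
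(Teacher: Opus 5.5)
Your route (contraposition, then chaining $R_{k+1}=R_k^{1+\theta^*}$ through Lemma~\ref{lower bound estimate} and letting the gain $R^{-\theta\varepsilon}$ pay for the losses) is exactly what the paper intends by ``based on the proof of Proposition~\ref{prop4}''. However, the step you flag as the main obstacle is a genuine gap, and neither proposed fix closes it. Feeding the hypothesis into the factor $M_{R_{k+1}}(B(y,R_k-R_{k+1})\cap F)$ of Lemma~\ref{lower bound estimate} costs something at \emph{every} step, not only at a bounded number of boundary terms. Either you rescale radii and pay a doubling constant, as in Lemma~\ref{lem4}, or you apply the hypothesis at radius $R_k-R_{k+1}$ (redefining the chain so the scales match) and lose a factor $(1-R_{k+1}/R_k)^{s+\varepsilon}$. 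That factor is tiny when $R^{\theta^*}$ is close to $1$. With $N\approx\log(1+\theta)/\log(1+\theta^*)$ steps, the accumulated loss $c^{-N}$ tends to $0$ as $\theta^*\to0$, while the gain $R^{-\theta\varepsilon}$ stays fixed. It is absorbed only in a regime like $\theta^*\log(1/R)\ge K(\varepsilon,C)$, with $C$ the doubling constant. In Proposition~\ref{prop4} this regime is automatic, because $\phi(R)\log(1/R)\to\infty$ and $R$ is taken small. In the corollary, $R$ is fixed first and $\theta^*$ ranges over all of $(0,\eta\theta)$. Letting $\eta$ depend on $R$ goes the wrong way: shrinking $\eta$ only admits smaller $\theta^*$ and longer chains. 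What you would need is a \emph{lower} bound on $\theta^*$ relative to $1/\log(1/R)$.

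No argument can remove this, because the statement as quantified is false. For fixed $R$, ${R^*}^{-\theta^*(s+\varepsilon)}<2$ on $(R^{1+\theta},R)$ once $\theta^*$ is small, so the conclusion forces some $B(x^*,R^*)\cap F$ to be covered by one ball of radius ${R^*}^{1+\theta^*}<R^*$. Take $F=S^1\subseteq\RR^2$ with balls centred in $F$, as in the n.b.d.\ setting. This never happens there, since the arc $B(x^*,R^*)\cap F$ has endpoints at distance $R^*$ from its midpoint $x^*$, and no point of $S^1$ is within distance less than $R^*$ of both. Yet $N_{R^2}(B(x,R)\cap F)\lesssim R^{-1}<R^{-2}$ for small $R$, so the hypothesis holds with $\theta=1$ and $s=2$. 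Take $\varepsilon=1$ and any $\eta>0$, and choose $0<\theta^*<\min\{\eta,\log 2/(6\log(1/R))\}$. Then ${R^*}^{-3\theta^*}\le R^{-6\theta^*}\le 2\le N_{{R^*}^{1+\theta^*}}(B(x^*,R^*)\cap F)$ for every $x^*$ and every $R^*\in(R^2,R)$. With ambient centres the same failure occurs for $\theta^*\in[R^2/(4\log(1/R)),\log 2/(6\log(1/R))]$, so no $\eta$ independent of $R$ works either.

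The provable version adds the hypothesis $\theta^*\log(1/R)\ge K(\varepsilon,C)$; equivalently, fix $\theta^*$ and take $R$ small. This is all Lemma~\ref{lemma4-1} actually uses, since there $\theta^*=\theta$ is fixed and $R_k\to0$, and in that regime your chain argument closes. One further adjustment: anchor the chain at $r=R^{1+\theta}$ and run it upward, so the incomplete step sits at the top and costs $R^{\theta^*(s+\varepsilon)}$ rather than $r^{\theta^*(s+\varepsilon)}$. This gives $\eta=\varepsilon/(2(s+\varepsilon))$, independent of $\theta$. Your condition $\eta(1+\theta)(s+\varepsilon)<\varepsilon/2$ is not, and Lemma~\ref{lemma4-1} needs that independence when $\theta_k\to\infty$.
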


By the Corollary \ref{coro4}, we have the following technical lemmas for the variational principles and the recovery of lower Assouad dimension by dimension functions.
\begin{lemma}\label{lemma4-1}
	Let $F$ is a n.b.d. space. For any $\varepsilon\in(0,s)$, if there exists a sequence $\{(R_n, \theta_n, x_n)\}_{n=1}^{\infty}$ such that $x_n\in F$, $\lim\limits_{n\rightarrow\infty}R_n=0$, $\lim\limits_{n\rightarrow\infty}\theta_n\log \frac{1}{R_n}=\infty$ and
	\begin{equation}\label{eq4-2}
		N_{{R_n}^{1+\theta_{n}}}(B(x_n,R_n)\cap F)\leq({R_n})^{-\theta_n ({s+\frac{\varepsilon}{2^{n+1}}})},
	\end{equation}
then there exist a sequence $\{(R_n^*,\theta_n^*,x_n^*)\}_{n=1}^{\infty}$ satisfying $\{R_n^*\}_{n=1}^{\infty}$monotonically decreasing to $0$,    $\{\theta_n^*\}_{n=1}^{\infty}$ is non-increasing and $\lim\limits_{n\rightarrow\infty}\theta_n^*\log \frac{1}{R_n^*}=\infty$ , also
	\begin{equation}\label{eq4-3}
		N_{{R_n^*}^{1+\theta_n^*}}\brackets{B(x_n^*,R_n^*)\cap F}\leq \brackets{R_n^*}^{-\theta_n^* (s+\frac{\varepsilon}{2^n})}.
	\end{equation}
Moreover, suppose that for some dimension function $\phi(R)$, the inequality $\theta_n \geq \phi(R_n)$ holds for all sufficiently large $n$. Then (\ref{eq4-3}) holds for the related constructed sequence $\{(R_n^*, \theta_n^*)\}_{n=1}^{\infty}$ with $\theta_n^* \geq \phi(R_n^*)$. Furthermore, if the original sequence $\{\theta_n\}_{n=1}^\infty$ is unbounded, then for any $\theta > 0$, (\ref{eq4-3}) holds for some sequence $\{(R_n^*,\theta_n^*,x_n^*)\}_{n=1}^{\infty}$ where $\theta_n^* = \theta$ for all sufficiently large $n$.
\end{lemma}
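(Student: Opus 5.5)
Given the sequence $\{(R_n,\theta_n,x_n)\}$ with (\ref{eq4-2}), I build the starred sequence inductively, using Corollary \ref{coro4} as the engine. The corollary lets us shrink the exponent $\theta$ by a factor below a threshold $\eta$ while losing only an additive $\varepsilon'$ in the exponent $s$, and while moving to a scale $R^*$ inside $(R^{1+\theta},R)$.

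\textbf{Step 1 (monotone scales).} Pass to a subsequence so that $R_{n+1}\le R_n^{1+\theta_n}$. The intervals $(R_n^{1+\theta_n},R_n)$ are then pairwise disjoint and ordered, so any $R_n^*$ chosen inside the $n$-th interval automatically gives a strictly decreasing sequence tending to $0$. Passing to a subsequence only improves the right-hand side of (\ref{eq4-2}), because $\varepsilon/2^{n+1}$ at the new index is at least that at the old one, up to reindexing by $k\mapsto n_k$. I would note this carefully, choosing the subsequence so that the index shift keeps $s+\varepsilon/2^{n_k+1}\le s+\varepsilon/2^{k+1}$.

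\textbf{Step 2 (making $\theta^*$ non-increasing).} Set $\theta_1^*=\theta_1$, $R_1^*=R_1$, $x_1^*=x_1$. Inductively, if $\theta_n\le\theta_{n-1}^*$, keep $(R_n,\theta_n,x_n)$. Otherwise apply Corollary \ref{coro4} with $R=R_n$, $\theta=\theta_n$, the exponent $s+\varepsilon/2^{n+1}$, and error $\varepsilon/2^{n+1}$. This gives some $\eta_n$, and we may choose any $\theta^*<\eta_n\theta_n$; take $\theta_n^*=\min\{\theta_{n-1}^*,\eta_n\theta_n/2\}$. The corollary produces $x_n^*$ and $R_n^*\in(R_n^{1+\theta_n},R_n)$ with exponent $s+\varepsilon/2^{n+1}+\varepsilon/2^{n+1}=s+\varepsilon/2^n$, which is (\ref{eq4-3}).

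\textbf{Main obstacle.} The hard part is ensuring $\theta_n^*\log(1/R_n^*)\to\infty$. The factor $\eta_n$ depends on $\varepsilon/2^{n+1}$ and may tend to $0$, which could destroy divergence. To handle this, I would exploit the freedom in Step 1: choose $R_n$ so small, since $\theta_n\log(1/R_n)\to\infty$, that $\eta_n\theta_{n-1}^*\log(1/R_n)\ge n$. This is possible because $\theta_{n-1}^*$ and $\eta_n$ are fixed before $R_n$ is selected. One difficulty is that $\eta_n$ depends only on $\varepsilon$ in the corollary, but $\theta_n$ varies along the subsequence. Using $\theta_n^*\ge\min\{\theta_{n-1}^*,\eta_n\theta_n/2\}$, it is enough to require both $\theta_{n-1}^*\log(1/R_n)\ge n$ and $\eta_n\theta_n\log(1/R_n)\ge 2n$. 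Since $\log(1/R_n^*)\ge\log(1/R_n)$, divergence follows.

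\textbf{Moreover parts.} If $\theta_n\ge\phi(R_n)$, I would additionally cap the choice by $\theta_n^*\ge\phi(R_n^*)$. The monotonicity of $\phi$ and $R_n^*<R_n$ give $\phi(R_n^*)\le\phi(R_n)$, so it suffices to keep $\theta_n^*\ge\phi(R_n)$ while staying below $\eta_n\theta_n$. This requires $\phi(R_n)<\eta_n\theta_n$, which I would arrange by choosing subsequence terms where $\phi(R_n)$ is small relative to $\theta_n$; this is the delicate point and may need $\phi(R)\to0$. If $\{\theta_n\}$ is unbounded, pick a subsequence with $\theta_n\to\infty$ so that $\eta_n\theta_n>\theta$, and take $\theta_n^*=\theta$ constant. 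Then $\theta\log(1/R_n^*)\to\infty$ trivially.
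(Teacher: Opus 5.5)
Your treatment of the main assertion is sound, granting Corollary~\ref{coro4} as the paper does, and it takes a different route from the paper. You choose the subsequence inside the induction, so $\eta_k$ and $\theta^*_{k-1}$ are fixed before $R_{n_k}$ is picked, and this does give $\theta_k^*\log(1/R_k^*)\to\infty$. The paper instead splits into bounded and unbounded $\{\theta_n\}$ and uses the corollary only in the unbounded case. Your unbounded-case argument matches the paper's.

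\textbf{The gap: the ``Moreover'' clause.} You flag this but do not close it. Each time your greedy step invokes the corollary, you must choose $\theta_n^*\in[\phi(R_n^*),\eta_n\theta_n)$. Nothing in the hypotheses makes $\phi(R_n)<\eta_n\theta_n$ achievable. A dimension function need not tend to $0$: for example $\phi\equiv\phi_0>0$ near $0$ is allowed, and the paper treats $\lim_{R\to0}\phi(R)>0$ explicitly in the proof of Theorem~\ref{THM_variational_principle}. Meanwhile $\eta_n$ is an uncontrolled positive constant tied to the error $\varepsilon/2^{n+1}$.

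Concretely, take $\phi\equiv\phi_0$ and $\phi_0\le\theta_n$ strictly increasing to $\theta^\infty$. Then $\theta_{n_k}>\theta_1\ge\theta^*_{k-1}$ for every $k\ge2$, so your construction enters the corollary branch at every step. It would need $\phi_0\le\theta_k^*<\eta_k\theta^\infty$, which is impossible as soon as $\eta_k\theta^\infty\le\phi_0$. Your remark that this ``may need $\phi(R)\to0$'' is accurate, but the lemma makes no such assumption.

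\textbf{The missing idea: in the bounded case, never lower $\theta$.}
\begin{itemize}
\item Pass to a monotone subsequence of $\{\theta_n\}$.
\item If it is non-increasing, keep the original triples, so $\theta_n^*=\theta_n\ge\phi(R_n)$.
\item If it increases to $\theta^\infty\in(0,\infty)$, keep $R_n$ and $x_n$ and raise the exponent to the constant $\theta^\infty\ge\theta_n\ge\phi(R_n)$. The doubling property (with constant $C$) gives
\[
N_{R_n^{1+\theta^\infty}}(B(x_n,R_n)\cap F)\lesssim N_{R_n^{1+\theta_n}}(B(x_n,R_n)\cap F)\cdot R_n^{-(\theta^\infty-\theta_n)\log_2 C}.
\]
Since $\theta^\infty-\theta_n\to0$ while $\theta^\infty>0$, the extra factor and the implicit constant are absorbed into the slack $\varepsilon/2^k$ after passing to a further subsequence.
\item In the unbounded case, take the constant $\theta>\sup\phi$, as the paper does.
\end{itemize}
This is essentially the paper's case split. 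Note that the paper writes the comparison between $N_{R_n^{1+\theta^*}}$ and $N_{R_n^{1+\theta_n}}$ as an equality; what is actually needed there is the doubling inequality above.
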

\begin{proof}
We begin by considering the case when $\{\theta_n\}_{n=1}^\infty$ is unbounded. 
For any $\theta>0$ and $i \geq 1$, it follows from Corollary \ref{coro4} that for $\frac{\varepsilon}{2^{i+1}}$, we can find $\eta_i>0$, where $k>i$ and $\theta_k>\frac{\theta}{\eta_i}$, i.e. $\theta<\eta_i\theta_k$,  one can choose $(R_k^{(i)},\theta,x_k^{(i)}) $ satisfying $R_k^{(i)}\in (R_k^{1+\theta_k}, R_k)$ and
	$$N_{{R_k^{(i)}}^{1+\theta}}(B(x_k^{(i)},R_k^{(i)})\cap F)<{R_k^{(i)}}^{-\theta(s+\frac{\varepsilon}{2^{k+1}}+\frac{\varepsilon}{2^{i+1}})}
	<{R_k^{(i)}}^{-\theta(s+\frac{\varepsilon}{2^{i}})}.$$
Let 
$$
\mathcal{J}_i=\left\{k ~|~ \,k>i,\,\theta_k>\frac{\theta}{\eta_i}\right\},
$$
and
$$
	\mathcal{S}_i=\{(R_k^{(i)},\theta,x_k^{(i)}) ~|~\,k\in \mathcal{J}_i\}.
$$
Note that $\lim\limits_{k\rightarrow\infty}R_k^{(i)}=0$ and $\mathcal{J}_i$ are infinite since  $(\theta_n)_{n=1}^{\infty}$ is unbounded. For each $i$, we inductively pick an element $(R_k^{(i)},\theta,x_k^{(i)})$ from $\mathcal{S}_i$, denote by $(R_i^*,\theta,x_i^*)$, such that $R_{i+1}^*<\frac{R_{i}^*}{2}$. Thus, by letting $\theta_n^* = \theta$ for all sufficiently large $n$, we obtain the desired sequence.

Furthermore,  suppose that $\phi(R)$ is a dimension function such that $\theta_n \geq \phi(R_n)$ for all sufficiently large $n$. Notice that $\phi(R)$ is bounded, let $M$ be an upper bound, fix some $\theta^* > M$ and set $\theta_n^*=\theta^*$, thus, following the proof of the above case, the desired sequence also holds since $\theta_n^*=\theta^* > M \geq \phi(R_n^*)$ for all sufficiently large $n$.

In the case when $\{\theta_n\}_{n=1}^\infty$ is bounded, by passing to a subsequence, we may assume the sequence $\{\theta_n\}_{n=1}^\infty$ is monotone and converges to some $\theta^*$. If $\{\theta_n\}_{n=1}^\infty$ is non-increasing, it suffices to set $(R_n^*,\theta_n^*,x_n^*) = (R_n,\theta_n,x_n)$ for all $n \geq 1$. As $\{(R_n^*,\theta_n^*,x_n^*)\}_{n=1}^{\infty}$ is a subsquence of $\{(R_n,\theta_n,x_n)\}_{n=1}^{\infty}$, it automatically satisfies (\ref{eq4-3}), 
also $\theta_n^*\geq\phi(R_n^*)$, thus, the proof for this case is complete.  

Otherwise, if $\{\theta_n\}_{n=1}^\infty$ is increasing and converges to some $\theta^*>0$, we can set $(R_n^*,\theta_n^*,x_n^*)=(R_n,\theta^*,x_n)$ for all $n$. Hence,
\begin{equation}\label{eq4-2-1}
	\begin{aligned}
			N_{{R_n^*}^{1+\theta_n^*}}\brackets{B(x_n^*,R_n^*)\cap F}
			&=	N_{{R_n}^{1+\theta_{n}}}(B(x_n,R_n)\cap F)\\
			&\leq({R_n})^{-\theta_n ({s+\frac{\varepsilon}{2^{n+1}}})}\\
			&=({R_n}^*)^{-\theta^*[\frac{\theta_n}{\theta^*}(s+\frac{\varepsilon}{2^{n+1}})]}
	\end{aligned}
\end{equation}
Observe that for every $n$, we can find some $n^{\prime}$ such that for any $k>n^{\prime}$, we have 
$$
\frac{\theta_k}{\theta^*}\cdot \left( s+\frac{\varepsilon}{2^{k+1}} \right) < s+\frac{\varepsilon}{2^n}.
$$
Therefore, by choosing a suitable subsequence, we obtain a sequence, still denoted by $\{(R_n,\theta_n^*,x_n)\}_{n=1}^{\infty}$ with $\theta_n^*=\theta^*$ for all sufficiently large $n$, that fulfills (\ref{eq4-3}) and $\theta_n^* \geq \theta_n \geq \phi(R_n)=\phi(R_n^*) $. This completes the proof.
\end{proof}

\begin{lemma}\label{lemma4-2}
	Suppose $F$ is a n.b.d. space, and $\{(R_n, \theta_n, x_n)\}_{n=1}^{\infty}$ is a sequence where $\{R_n\}_{n=1}^{\infty}$ is monotonic decreasing to $0$, $\{\theta_n\}_{n=1}^\infty$ is non-increasing,	$\{\theta_n\log\frac{1}{R_n}\}_{n=1}^{\infty}$ monotinically increases to $+\infty$. For any $\varepsilon\in(0,s)$ and for all sufficiently large $n \geq 1$, it gives
\begin{equation}\label{eq-lem4-2-1}
	N_{R_n^{1+\theta_n}}(B(x_n,R_n)\cap F)\leq R_n^{-(s+\frac{\varepsilon}{2^n})\theta_n},
\end{equation}
then there exists a dimension function $\phi(R)$ satifsying $\phi(R_n)=\theta_n$ for all $n \geq 1$, and then $\dim_{\rm L}^{\phi}F\leq s$. Moreover, for any dimension function $\phi^*(R)$ satisfying $\phi^*(R_n)=\theta_n$, we always have $\phi(R)\geq \phi^*(R)$,  which yields $\phi(R)$ is a maximal element with respect to the given conditions.
\end{lemma}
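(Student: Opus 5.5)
We construct $\phi$ explicitly as a step function interpolating the data $\phi(R_n)=\theta_n$. The natural candidate is
\[
\phi(R) := \theta_n \quad \text{for } R_{n+1} < R \leq R_n,
\]
and, for $R > R_1$, $\phi(R) := \theta_1$. This already gives $\phi(R_n)=\theta_n$.

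The monotonicity condition (1) in the definition of dimension function holds because $\{\theta_n\}$ is non-increasing: as $R$ decreases, $\phi(R)$ decreases.

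Condition (2) asks that $R\mapsto \phi(R)\log(1/R)$ increase to infinity. On each interval $(R_{n+1},R_n]$, $\phi$ is constant and $\log(1/R)$ increases as $R$ decreases, so the product increases there. At a jump, just above $R_{n+1}$ the value is close to $\theta_n\log(1/R_{n+1})$, and at $R_{n+1}$ it is $\theta_{n+1}\log(1/R_{n+1})$. Since $\theta_{n+1}\le\theta_n$, the product may drop at the jump, so the step choice can violate monotonicity.

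To repair this we use the \emph{maximal} candidate suggested by the last assertion:
\[
\phi(R) := \max\Bigl\{\theta_{n+1},\ \frac{\theta_n\log(1/R_n)}{\log(1/R)}\Bigr\}\quad\text{for } R_{n+1}< R\le R_n .
\]
That is, on $(R_{n+1},R_n]$ we take the largest function that is $\le\theta_n$, is non-increasing, has $\phi(R)\log(1/R)\ge\theta_n\log(1/R_n)$, and equals $\theta_{n+1}$ at $R_{n+1}$. Since $\theta_{n+1}\log(1/R_{n+1})\ge\theta_n\log(1/R_n)$ by hypothesis, the product $\phi(R)\log(1/R)$ is non-decreasing as $R\downarrow0$ and tends to infinity. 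Moreover $\phi$ is non-increasing and $\phi(R_n)=\theta_n$.

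Maximality is then a direct check. Suppose $\phi^*$ is any dimension function with $\phi^*(R_n)=\theta_n$. Monotonicity gives $\phi^*(R)\le\phi^*(R_n)=\theta_n$ on $(R_{n+1},R_n]$. Condition (2) gives $\phi^*(R)\log(1/R)\le\phi^*(R_{n+1})\log(1/R_{n+1})=\theta_{n+1}\log(1/R_{n+1})$, i.e. $\phi^*(R)\le \theta_{n+1}\log(1/R_{n+1})/\log(1/R)$. The bounds on $\phi^*$ are therefore of the form "min of two quantities". The main obstacle is reconciling the formula so that it is simultaneously a dimension function and dominates every admissible $\phi^*$. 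The honest maximal element is
\[
\phi(R)=\min\Bigl\{\theta_n,\ \frac{\theta_{n+1}\log(1/R_{n+1})}{\log(1/R)}\Bigr\}\quad\text{on } (R_{n+1},R_n].
\]
We verify directly that this min-formula is non-increasing as $R\downarrow 0$, that $\phi\log(1/R)$ is non-decreasing (it is either $\theta_n\log(1/R)$ or constant), and that it matches at the endpoints because $\theta_{n+1}\le\theta_n$ and $\theta_{n+1}\log(1/R_{n+1})\ge\theta_n\log(1/R_n)$. We use this formula, discarding the max-version above.

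The dimension bound is immediate from formula (\ref{phi dim formula}). Since $\phi(R_n)=\theta_n$, we have $R_n^{1+\phi(R_n)}=R_n^{1+\theta_n}$. Hypothesis (\ref{eq-lem4-2-1}) then gives
\[
\frac{\inf_x\log N_{R_n^{1+\phi(R_n)}}(B(x,R_n)\cap F)}{-\phi(R_n)\log R_n}\le s+\frac{\varepsilon}{2^n}.
\]
Taking the liminf along $R_n\to0$ yields $\dim_{\rm L}^\phi F\le s$.
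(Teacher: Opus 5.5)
Your final min-formula is exactly the paper's $\phi$: it equals $\theta_n$ on $[R_n',R_n]$ and $\theta_{n+1}\log R_{n+1}/\log R$ on $[R_{n+1},R_n']$, where $\theta_n\log(1/R_n')=\theta_{n+1}\log(1/R_{n+1})$. Your maximality check via the two upper bounds on $\phi^*$ and your dimension bound along $R_n$ are the paper's arguments, so the proof is correct and follows the paper's route. Two cosmetic points: the discarded max-formula is in fact the \emph{minimal} interpolant (the $\widetilde{\phi}$ of the paper's Remark) and should be deleted rather than called maximal; and on $(R_1,1)$ your choice $\phi=\theta_1$ is not maximal, since the maximal extension there is $\theta_1\log(1/R_1)/\log(1/R)$, though the paper itself only treats $(0,R_1]$.
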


\begin{proof}
If for some integer $n$, $\theta_{n+1}=\theta_n$, it suffices to define $\phi(R)=\theta_n$ on $[R_{n+1},R_n]$. This gives $\phi(R)\log \frac{1}{R}$ is strictly decreasing on $[R_{n+1},R_n]$. 
Thus, without loss of generality, we assume that $\theta_{n+1}<\theta_n$. By assumptions, for any $n \geq 1$, $\theta_{n+1} \log \frac{1}{R_{n+1}} > \theta_{n} \log \frac{1}{R_{n}}$, i.e., $R_{n+1}^{\theta_{n+1}}<R_n^{\theta_n}$, and we can find some $R_n'\in(R_{n+1},R_n)$, such that $\theta_{n} \log \frac{1}{R_n^\prime} = \theta_{n+1} \log \frac{1}{R_{n+1}}$, i.e., $R_n'^{\theta_{n}}=R_{n+1}^{\theta_{n+1}}$.

We first introduce the construction of the dimension function. For any $R \in [R_n',R_n]$, we define $\phi(R)=\theta_{n}$, then  $\phi(R) \log \frac{1}{R}$ is decreasing on $[R_n',R_n]$. For $R \in [R_{n+1},R_n']$, let $\phi(R)=\theta_{n+1}\cdot \frac{\log R_{n+1}}{\log R}$, then $\phi(R)\log \frac{1}{R}$ is a constant ${\theta_{n+1}} \log \frac{1}{R_{n+1}}$, this gives $\phi(R)$ is increasing. By the definition of dimension, we see that $\phi(R)$ is a dimension function satisfing $\phi(R_n)=\theta_n$ for all $n$, i.e.,
\[
\phi(R) =
\begin{cases}
\theta_n, & R \in [R_n',R_n]; \\
\theta_{n+1}\cdot \frac{\log R_{n+1}}{\log R}, & R \in [R_{n+1},R_n'].
\end{cases}
\]
 
 We now prove that for any dimension function $\phi^*$, $\phi(R)\geq \phi^*(R)$ where $\phi^*(R_n)=\theta_n$ for all $n \geq 1$. Given any $n \geq 1$, for any $R\in[R_n',R_n]$, according to the definition of dimension function, it gives
 \begin{equation}\label{lemma6-1}
 	\phi^*(R)\leq \phi^*(R_n)=\phi(R_n)=\phi(R).
 \end{equation}
While $R\in[R_{n+1},R_n']$, we have
\begin{equation}\label{lemma6-2}
	\phi^*(R)\log R\geq \phi^*(R_{n+1})\log {R_{n+1}}=\theta_{n+1}\log R_{n+1}= \phi(R)\log R.
\end{equation}
This gives $\phi^*(R)\leq \phi(R) $ for any $R \in [R_{n+1},R_n]$.

It remains to prove that $\dim_{\rm L}^{\phi} F \leq s$. It follows from the definition of generalised lower dimension that
\begin{align*}
	\dim_{\rm L}^{\phi}F & \leq \liminf\limits_{R \to 0}\frac{\inf\limits_{x\in F}\log N_{R^{1+\phi(R)}}\brackets{B(x,R)\cap F}}{-\phi(R)\log R} \\
	& \leq \liminf\limits_{R \to 0}\frac{\inf\limits_{x\in F}\log N_{R_n^{1+\phi(R_n)}}\brackets{B(x,R_n)\cap F}}{-\phi(R_n)\log R_n}\leq s.
\end{align*}
\end{proof}

\begin{remark}
Similar to the proof of Lemma \ref{lemma4-2}, for the sequence $\{(R_n, \theta_n, x_n) \}_{n=1}^\infty$ in Lemma \ref{lemma4-2}, we can also define the dimension function $\widetilde{\phi}(R)$ such that $\widetilde{\phi}(R)$ is the mininal element satisfying $\widetilde{\phi}(R_n) = \theta_n$ for all $n \geq 1$. In other words, for any dimension function $\phi(R)$ satisfying $\phi(R_n) = \theta_n$ for all $n \geq 1$, we always have $\widetilde{\phi}(R) \leq \phi(R)$. Hence we just need to consider 
\[ \widetilde{\phi}(R) =  
\begin{cases}
\frac{\theta_n \log R_n}{\log R} & R \in [\widetilde{R_n}, R_n] \\
\theta_{n+1} & R \in [R_{n+1}, \widetilde{R_n}] 
\end{cases}
\]
where $\widetilde{R_n}$ satisfies $\theta_{n+1} \log \widetilde{R_n} = \theta_n \log R_n$.
\end{remark}

We now turn to prove the Theorem \ref{THM_variational_principle}, which gives a variational principle between two different generalised lower dimensions.

\begin{proof}[Proof of Theorem \ref{THM_variational_principle}]
	If $\lim\limits_{R \to 0} \phi(R) > 0$, then we choose $\phi(R) = -1 + 1/\theta$ for some $\theta \in (0,1)$, and it follows from Theorem \ref{THM_equivalence} and \cite{CWC2020} and \cite{HT2019} that 
	\[
	\underline{\dim}_{\rm L}^\phi F = \underline{\dim}_{\rm L}^\theta F =  \inf_{0 < \theta' \leq \theta} \dim_{\rm L}^{\theta'} F,
	\]
	where $\underline{\dim}_{\rm L}^\theta F $ and $\dim_{\rm L}^{\theta} F$ are the quasi-lower Assouad spectrum and lower Assouad spectrum of $\theta \in (0,1)$. 
	
	Thus, we always assume that  $\lim\limits_{R \to 0} \phi(R) = 0$. For any $\alpha \in (0,1)$, and any $0 < R < 1$, $\phi(R) \leq \phi(R)/\alpha$, thus it follows from the definition of both $\underline{\dim}_{\rm L}^\phi F $ and $\dim_{\rm L}^{\phi_\alpha} F $ that
	\[
	\underline{\dim}_{\rm L}^\phi F  \leq  \inf_{0 < \alpha < 1} \dim_{\rm L}^{\phi_\alpha} F 
	\]
	For the converse inequality, let $s := \underline{\dim}_{\rm L}^\phi F$, if $s = 0$, then the discussion is trivial. Thus for the rest of this proof, we consider the case of $s > 0$, and it suffices to prove 
	\[
	\inf_{0 < \alpha < 1} \dim_{\rm L}^{\phi_\alpha} F  \leq \underline{\dim}_{\rm L}^\phi F. 
	\]
	
	Given sufficiently small $\varepsilon > 0$, it follows from the definition of $\underline{\dim}_{\rm L}^\phi F$ that there exist a sequence $\{ x_n \}_{n=1}^\infty \subseteq F$, and $\{(r_n, R_n) \}_{n=1}^\infty $ satisfying that $r_n \leq R_n^{1+\phi(R_n)}$ and
	\[
	N_{r_n}(B(x_n,R_n) \cap F) \leq \left(\frac{R_n}{r_n} \right)^{s+\frac{\varepsilon}{2^{n+1}}}.
	\]
	Fix $n$, we define $\theta_n$ be the solution of 
	\[
	r_n = R_n^{1+\theta_n},
	\]
	thus $\phi(R_n)\leq \theta_n$.
	According to Lemma \ref{lemma4-1}, we can find a sequence $\{(R_n^{*},\theta_n^{*},x_n^{*})\}_{n=1}^{\infty}$ such that $\{\theta_n^{*}\}_{n=1}^{\infty}$ is non-increasing and $\phi(R_n^{*})\leq \theta_n^{*}$,
	also,
	$$N_{{R_n^{*}}^{1+\theta_{n}^*}} (B(x_n, R_n^*) \cap F)\leq {R_n^{*}}^{-\theta_n^{*}(s+\frac{\varepsilon}{2^{n}})}$$
	 
	Combined with Lemma \ref{lemma4-2}, there exists a dimension function $\widetilde{\phi}(R)$ such that 
	 $\widetilde{\phi}({R_n^{*}})=\theta_{n}^{*}$, therefore $\dim_{\mathrm L}^{\widetilde{\phi}}F\leq s$ and $\phi(R_n^*)\leq \theta_{n}^*=\widetilde{\phi}({R_n^{*}})$. 
	 
	We denote $$b_n=\frac{\phi({R_n^*})}{\widetilde{\phi}({R_n^*})},$$ then $0<b_n\leq 1$. By passing to a subsequence, 
	we assume that $\{b_n\}_{n=1}^{\infty}$ is monotomic and donote $b_0$ by the limit of  $\{b_n\}_{n=1}^{\infty}$.
	\begin{enumerate}
			\item If $b_0=0$, it follows from Proposition \ref{prop4} that
			$$\dim_{\mathrm{L}}^{\phi}F\leq\dim_{\mathrm{L}}^{\widetilde{\phi}}F\leq s=\underline{\dim}_{\mathrm{L}}^{\phi}F,$$ 
			therefore
			 \[
			\inf_{0 < \alpha < 1} \dim_{\rm L}^{\phi_\alpha} F  \leq \underline{\dim}_{\rm L}^\phi F. 
			\]
	       \item If $0<b_0<1$, in this case,
	        $\lim\limits_{n\rightarrow \infty}\frac{\phi({R_n^*})/b_0}{\widetilde{\phi}({R_n^*})}=1$, 
	        thus $\dim_{\mathrm{L}}^{\phi_{b_0}}F\leq s$, in this case, we also have
	        \[
	        \inf_{0 < \alpha < 1} \dim_{\mathrm L}^{\phi_\alpha} F  \leq \underline{\dim}_{\mathrm L}^\phi F. 
	        \]
	\end{enumerate}
\end{proof}

Directly by Theorem \ref{THM_variational_principle}, we obtain the natural corollary as follows.

\begin{corollary}
Given n.b.d. space $F$ and dimension function $\phi(R)$, there exist a dimension function $\psi(R)$ such that
\[
 \underline{\dim}_{\rm L}^\phi F = \underline{\dim}_{\rm L}^\psi F = \dim_{\rm L}^\psi F.
\]
\end{corollary}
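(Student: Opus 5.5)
The natural approach is to reuse the sequence machinery from the proof of Theorem \ref{THM_variational_principle}, rather than to quote its statement. Set $s := \underline{\dim}_{\rm L}^\phi F$. The definitions give $\underline{\dim}_{\rm L}^\psi F \leq \dim_{\rm L}^\psi F$ for every dimension function $\psi$, so two things are needed: a $\psi$ with $\dim_{\rm L}^\psi F \leq s$, and a guarantee that $\underline{\dim}_{\rm L}^\psi F \geq s$.

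\emph{Trivial case.} If $s=0$, take $\psi=\phi$ and check directly that $\dim_{\rm L}^\phi F$ coincides.

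\emph{Constructing $\psi$ when $s>0$.} For each $n$, take witnesses $r_n \leq R_n^{1+\phi(R_n)}$ and $x_n \in F$ with
\[
N_{r_n}(B(x_n,R_n)\cap F) \leq (R_n/r_n)^{s+\varepsilon 2^{-n-1}},
\]
where $\varepsilon$ is replaced by a sequence $\varepsilon_k \to 0$ and the witnesses are interleaved by a diagonal argument, so that the exponent excess tends to $0$. Define $\theta_n$ by $r_n = R_n^{1+\theta_n}$, which gives $\theta_n \geq \phi(R_n)$. Apply Lemma \ref{lemma4-1} to make $\{\theta_n^*\}$ non-increasing with $\theta_n^* \log(1/R_n^*) \to \infty$ and $\theta_n^* \geq \phi(R_n^*)$; pass to a subsequence so that $\theta_n^*\log(1/R_n^*)$ increases. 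Then Lemma \ref{lemma4-2} produces a dimension function $\widetilde{\phi}$ with $\widetilde{\phi}(R_n^*)=\theta_n^*$ and $\dim_{\rm L}^{\widetilde{\phi}}F \leq s$. Since the exponent errors shrink to $0$, this yields $\dim_{\rm L}^{\widetilde{\phi}}F \leq s$ exactly, not merely up to $\varepsilon$.

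\emph{Forcing $\widetilde{\phi}\geq\phi$.} For the reverse inequality it suffices that $\psi(R) \geq \phi(R)$ for all small $R$. In that case $\{r \leq R^{1+\psi(R)}\} \subseteq \{r \leq R^{1+\phi(R)}\}$, so the quasi-$\psi$ condition is weaker and $\underline{\dim}_{\rm L}^\psi F \geq \underline{\dim}_{\rm L}^\phi F = s$. Lemma \ref{lemma4-2} says $\widetilde{\phi}$ is the \emph{maximal} dimension function through the points $(R_n^*,\theta_n^*)$. By that maximality, $\widetilde{\phi} \geq \phi^*$ for any dimension function $\phi^*$ interpolating these points. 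So the aim is to build such a $\phi^*$ with $\phi^* \geq \phi$.

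A candidate is $\phi^*(R) = \max\{\phi(R), \widetilde{\phi}_{\min}(R)\}$, where $\widetilde{\phi}_{\min}$ is the minimal interpolant from the Remark after Lemma \ref{lemma4-2}. Both arguments are dimension functions, and the maximum of two dimension functions is again one: monotonicity is preserved, and $\max\{\cdot\}\log(1/R)$ still increases to infinity. At the nodes, $\phi^*(R_n^*) = \theta_n^*$ because $\theta_n^* \geq \phi(R_n^*)$. Hence $\psi := \widetilde{\phi} \geq \phi^* \geq \phi$. Combining, $s \leq \underline{\dim}_{\rm L}^\psi F \leq \dim_{\rm L}^\psi F \leq s$.

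\emph{Main obstacle.} The delicate step is the bookkeeping in Lemmas \ref{lemma4-1}–\ref{lemma4-2}. Lemma \ref{lemma4-1} replaces $\theta_n$ by a monotone $\theta_n^*$ and may move $R_n$; one must check that it preserves $\theta_n^* \geq \phi(R_n^*)$, which its "moreover" clause provides. One must also ensure $\theta_n^*\log(1/R_n^*)$ is increasing, which may require thinning the subsequence. If $\theta_n$ is unbounded, Lemma \ref{lemma4-1} returns a constant $\theta^*$ exceeding $\sup\phi$. In that case $\psi \equiv \theta^*$ is simply a constant, spectrum-type function, and the same inequality chain applies.
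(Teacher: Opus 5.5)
Your argument is correct, but it takes a different route from the paper. The paper obtains the corollary in one line from Theorem~\ref{THM_variational_principle}. The evident way to do so is to take $\psi=\phi_{\alpha_0}=\phi/\alpha_0$ for some $\alpha_0\in(0,1]$ with $\dim_{\rm L}^{\phi_{\alpha_0}}F=\underline{\dim}_{\rm L}^{\phi}F$; then $\phi_{\alpha_0}\ge\phi$ gives $\underline{\dim}_{\rm L}^{\phi_{\alpha_0}}F\ge\underline{\dim}_{\rm L}^{\phi}F$. This keeps $\psi$ inside the rate window $\mathcal{W}_\phi$. However, it requires the infimum over the open interval $(0,1)$ to be attained (or reached in the limit $\alpha\to1$), and the statement of Theorem~\ref{THM_variational_principle} does not give this. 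It is only implicit in the $b_0$ case analysis of its proof, which relies on Proposition~\ref{prop4} and a nodewise comparison of $\phi/b_0$ with $\widetilde{\phi}$.

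You stop earlier in that same proof. You take $\psi=\widetilde{\phi}$ itself and observe that the maximality in Lemma~\ref{lemma4-2}, combined with $\theta_n^*\ge\phi(R_n^*)$, forces $\widetilde{\phi}\ge\phi$ at every small scale. Then $\underline{\dim}_{\rm L}^{\widetilde{\phi}}F\ge s$ comes for free from the monotonicity of the quasi-dimension in the dimension function. This bypasses attainment, Proposition~\ref{prop4} and the case analysis, at the price of a tailor-made $\psi$ that need not be a multiple of $\phi$.

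A few small points:
\begin{itemize}
\item For $s=0$, the equality $\dim_{\rm L}^\phi F=0$ is not ``direct'', since the definitions only give $\underline{\dim}_{\rm L}^\phi F\le\dim_{\rm L}^\phi F$. You need a short chaining argument with Lemma~\ref{lower bound estimate} along $R_{i+1}=R_i^{1+\phi(R_i)}$. Each step is at most $1+M$ times the previous one in logarithmic length, where $M$ bounds $\phi$ at small scales, and this yields $\underline{\dim}_{\rm L}^\phi F\ge\dim_{\rm L}^\phi F/(2+M)$.
\item The detour through $\max\{\phi,\widetilde{\phi}_{\min}\}$ is unnecessary. The maximality proof in Lemma~\ref{lemma4-2} uses only $\phi^*(R_n)\le\theta_n$ and the two monotonicity properties, so it applies to $\phi$ directly.
\item The diagonal argument over $\varepsilon_k$ is redundant, since the excess $\varepsilon2^{-n-1}$ already tends to $0$.
\item $\widetilde{\phi}\ge\phi$ is only obtained on $(0,R_1^*]$. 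Extending by $\max\{\phi,\theta_1^*\}$ on $[R_1^*,1)$ keeps it a dimension function and takes care of large scales.
\end{itemize}
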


\subsection{Proof of Theorem \ref{THM_interpolation_positive}: Interpolation of generalised lower dimensions}

In this part, we study the interpolation of generalised lower Assouad type dimensions, and give the proof of Theorem \ref{THM_interpolation_positive}. We first discuss the recovery of lower Assouad dimension by dimension functions. Note that using the similar analogous of Theorem  \ref{THM_variational_principle}, we obtain the  different proof on the general recovery of lower Assouad dimension as follows.

\begin{proposition}\label{low1}
Given $F \subseteq \mathbb{R}^d$. Let $s := \dim_{\mathrm{L}}F$, then there exist a dimension function $\phi(R)$ such that $\dim_{\mathrm{L}}^{\phi}F=s$.
\end{proposition}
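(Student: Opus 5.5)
The plan mirrors the proof of Theorem \ref{THM_variational_principle}. The inequality $\dim_{\mathrm{L}}F \leq \dim_{\mathrm{L}}^{\phi}F$ holds for every dimension function $\phi$, directly from the definitions. So the whole task is to build a $\phi$ with $\dim_{\mathrm{L}}^{\phi}F\leq s$.

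If $s=\dim_{\mathrm L}F$ equals $\underline{\dim}_{\mathrm B}F$ (or the sup bound $\log_2 C$), any dimension function works, since $\dim_{\mathrm{L}}^{\phi}F\leq \lbd F$. We therefore assume there is room above $s$. Fix $\varepsilon\in(0,s)$; the case $s=0$ is handled the same way with $s+\varepsilon/2^{n+1}$ in place of $s$. By the definition of $\dim_{\mathrm L}F$, for each $n$ we can find $x_n\in F$ and scales $0<r_n<R_n$ with
\[
N_{r_n}(B(x_n,R_n)\cap F)\leq \left(\frac{R_n}{r_n}\right)^{s+\varepsilon/2^{n+1}}.
\]
Write $r_n=R_n^{1+\theta_n}$.

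Next we need $R_n\to 0$ and $\theta_n\log(1/R_n)\to\infty$. The second condition is automatic: a bounded ratio $R_n/r_n$ gives a bounded covering number, which the doubling property rules out for exponents close to $s$, unless $s$ is $0$. For the first, if the $R_n$ stay bounded below, we use the doubling property to replace $R_n$ by a subball of comparable radius. This is the standard fact that in the definition of $\dim_{\mathrm L}$ one may restrict to $R<R_0$.

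Apply Lemma \ref{lemma4-1} to $\{(R_n,\theta_n,x_n)\}$. This yields a sequence $\{(R_n^*,\theta_n^*,x_n^*)\}$ with $R_n^*\downarrow 0$, $\theta_n^*$ non-increasing, $\theta_n^*\log(1/R_n^*)\to\infty$, and
\[
N_{{R_n^*}^{1+\theta_n^*}}(B(x_n^*,R_n^*)\cap F)\leq (R_n^*)^{-\theta_n^*(s+\varepsilon/2^n)}.
\]
In the unbounded case, the lemma lets us even take $\theta_n^*$ constant eventually.

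Passing to a subsequence, we arrange that $\theta_n^*\log(1/R_n^*)$ is strictly increasing. This is possible because it tends to infinity, and the conditions of Lemma \ref{lemma4-2} are preserved under subsequences. Lemma \ref{lemma4-2} then produces a dimension function $\phi$ with $\phi(R_n^*)=\theta_n^*$ and $\dim_{\mathrm L}^\phi F\leq s$. Combined with the trivial lower bound, this gives $\dim_{\mathrm L}^\phi F=s$.

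The main obstacle is the step where the exponent $s+\varepsilon/2^n$ must shrink to exactly $s$ along the sequence. One fixed $\varepsilon$ only gives $\leq s+\varepsilon$ at each scale; Lemma \ref{lemma4-1} is designed to repair this, and checking its hypotheses carefully is the delicate point. I would also verify that the $\liminf$ in (\ref{phi dim formula}) is indeed controlled by the subsequence $R_n^*$, which is exactly the last display in the proof of Lemma \ref{lemma4-2}. Finally, the non-increasing requirement on $\theta_n^*$ matters when $\theta_n\to 0$ irregularly; there one takes a monotone subsequence, as in the bounded case of Lemma \ref{lemma4-1}.
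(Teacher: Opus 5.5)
Your proposal is correct and follows the paper's proof. The paper also takes $(x_n,R_n,r_n)$ from the definition of $\dim_{\mathrm L}F$ with exponents $s+\varepsilon/2^{n+1}$, writes $r_n=R_n^{1+\theta_n}$, and uses Lemma~\ref{lemma4-1} to make $\theta_n^*$ non-increasing. It then applies Lemma~\ref{lemma4-2} to build $\phi$ with $\dim_{\mathrm L}^{\phi}F\le s$, and the reverse inequality is trivial. Your extra passage to a subsequence, so that $\theta_n^*\log(1/R_n^*)$ is increasing as Lemma~\ref{lemma4-2} requires, is a detail the paper leaves implicit.

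One justification needs replacing. $R_n/r_n\to\infty$ is neither automatic nor forced by doubling: small covering numbers at bounded ratios are possible. It does follow from negating the definition with constants $C_n=1/n$, which gives $1\le N_{r_n}(B(x_n,R_n)\cap F)<n^{-1}(R_n/r_n)^{s+\varepsilon/2^{n+1}}$. Likewise, the exponents tending to $s$ come directly from the definition; Lemma~\ref{lemma4-1} is only needed to make $\theta_n$ monotone.
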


\begin{proof}
Given sufficiently small $\varepsilon > 0$, it follows from the definition of $\dim_{\mathrm L} F$ that there exist a sequence $\{ x_n \}_{n=1}^\infty \subseteq F$, and $\{(r_n, R_n)\}_{n=1}^\infty $ satisfying that $\frac{r_n}{R_n}$ tends to $0$ and
\[
N_{r_n}(B(x_n,R_n) \cap F) \leq \left(\frac{R_n}{r_n} \right)^{s+\frac{\varepsilon}{2^{n+1}}}.
\]
Fix $n$, we define $\theta_n$ be the solution of 
\[
r_n = R_n^{1+\theta_n}.
\]
According to Lemma \ref{lemma4-1}, we can find a sequence $\{(R_n^{*},\theta_n^{*},x_n^{*})\}_{n=1}^{\infty}$ such that $\{\theta_n^{*}\}_{n=1}^{\infty}$ is non-increasing and $\phi(R_n^{*})\leq \theta_n^{*}$. Also,
$$N_{{R_n^{*}}^{1+\theta_{n}^*}}(B(x_n^*, R_n^*) \cap F) \leq {R_n^{*}}^{-\theta_n^{*}(s+\frac{\varepsilon}{2^{n}})}.$$

Combined with Lemma \ref{lemma4-2}, there exists a dimension function $\widetilde{\phi}(R)$ such that
$\widetilde{\phi}({R_n^{*}})=\theta_{n}^{*}$, thus we have $\dim_{\mathrm L}^{\widetilde{\phi}}F\leq s$. The converse part is trivial.
\end{proof}

According to Lemma \ref{lemma4-1}, Lemma \ref{lemma4-2} and Proposition \ref{low1}, we have a proof of Theorem \ref{THM_lower_dim} as follows.

\begin{proof}[Proof of Theorem \ref{THM_lower_dim}]
Let $s := \dim_{\mathrm{L}}F$. By the assumption of $s$ and combined with Lemma \ref{lemma4-1}, for any $\varepsilon > 0$, we have a sequence $\{(R_n, \theta_n, x_n)\}_{n=1}^{\infty}$ such that $\{R_n \}_{n=1}^\infty$ is monotonic decreasing to $0$, $\{\theta_n \}_{n=1}^\infty$ is non-increasing and $\lim_{n \to \infty} \theta_n \log \frac{1}{R_n} = \infty$. Besides, for any $n \geq 1$, it gives 
\[
N_{R_n^{1+\theta_n}}(B(x_n, R_n ) \cap F) \leq R_n^{-\theta_n(s+\frac{\varepsilon}{2^{n+1}})}.
\]

If there exist some constant $\theta > 0$ such that $\lim_{n \to \infty} \theta_n = \theta$, in this case, $\dim_{\rm L}^\theta F  = \dim_{\rm L} F$, then the result is trivial. Without loss of generality, we always assume that $\lim_{n \to \infty} \theta_n = 0$. If for a subsequence $\{n_k \}_{k = 1}^\infty \subseteq \mathbb{N}$ such that $\theta_{n_k} \leq \Phi(R_{n_k})$, let $\Psi(R)$ be the minimal dimension function such that $\Psi(R_{n_k}) = \theta_{n_k}$, then $\Psi(R_{n_k}) = \theta_{n_k} \leq \Phi(R_{n_k})$. Since $\Psi(R)$ is the minimal one, we obtain that $\Psi(R) \leq \Phi(R)$ for all $R  > 0$. This also gives $\dim_{\mathrm L}^\Psi F = s$.

If $\{\frac{\theta_n}{\Phi(R_n)} \}_{n=1}^\infty$ is unbounded, then by passing to a subsequence, we assume that $\{\frac{\theta_n}{\Phi(R_n)} \}_{n=1}^\infty$ is monotonic increasing to infinity. This gives $\lim_{n \to \infty} \frac{\Phi(R_n)}{\theta_n} = 0$. Thus, combining Lemma \ref{lemma4-2} and Proposition \ref{low1}, the result naturally holds.

Otherwise, if there exist a constant $C > 1$ such that for all $n \geq 1$, $1 \leq \frac{\theta_n}{\Phi(R_n)} \leq C$, since $\lim_{n \to \infty} \theta_n \log \frac{1}{R_n} = \infty$, by passing to a subsequence again, we assume that $\theta_n \log \frac{1}{R_n} >  n \theta_{n-1} \log \frac{1}{R_{n-1}}$, i.e., 
\[
\frac{\theta_n}{n} \log \frac{1}{R_n} >  \theta_{n-1} \log \frac{1}{R_{n-1}}.
\]
By the techniques of Lemma \ref{lemma4-1}, there exist a sequence $\{(R_n^{*}, \theta_n^*, x_n^{*})\}_{n=1}^{\infty}$ satisfying $\{R_n^* \}_{n=1}^\infty$ is monotonic decreasing to $0$, $x_n^* \in F$ for all $n \geq 1$, $\lim_{n \to \infty} \frac{\theta_n^*}{\theta_n} = 0$, $\lim_{n \to \infty} \theta_n^* \log \frac{1}{R_n^*} = \infty$ and 
\[
N_{{R_n^{*}}^{1+\theta_n^*}}(B(x_n^*, R_n^*) \cap F) \leq {R_n^{*}}^{-\theta_n^*(s+\frac{\varepsilon}{2^{n}})}.
\]
Then let $\Psi(R)$ be the mininal dimension function satisfying $\Psi(R_n^*) = \theta_n^*$, then the desired argument holds. This gives the complete proof. 
\end{proof}

We now turn to the interpolation on the region $(\dim_{\rm L }F, \dim_{\rm qL }F]$. We first give some notations. For any $x > 0$ and $0 < y < x$, we first define the local complexity function $\omega(x,y)$ by 
\begin{equation}\label{omega x y}
\omega(x, y) = \inf_{\substack{\xi \in F}} \frac{\log N_{y}(B(\xi,x) \cap F)}{\log (x/y)}.
\end{equation}
Given any positive continuous functions $f(t), f_1(t), f_2(t): \mathbb{R}^+ \to \mathbb{R}^+ $, we also define the regions $\mathcal{D}(R,f(t))$ and $\mathcal{D}(R,f_1(t),f_2(t))$ determined by 
\begin{align}\label{regions}
	\mathcal{D}(R,f(t)) & = \bigbrackets{(t,y) \mathdot 0 < t \leq R, \, 0 < y \leq f(t) }, \\
	\mathcal{D}(R,f_1(t),f_2(t)) & =	\mathcal{D}(R,f_1(t))\setminus 	\mathcal{D}(R,f_2(t)).
\end{align}

 The following technical claim is applicable to discuss the interpolation of lower Assouad-type dimensions under generalised lower Assouad dimensions.

\begin{claim}\label{cm3}
	For any $s \in (\dim_{\rm L }F, \dim_{\rm qL }F]$ and sufficiently small $\varepsilon > 0$, if there exists
a dimension function $\phi(x)$, and sequence $\{R_i\}_{i=1}^\infty$ decreasing to $0$, such that for any $i \geq 1$, it gives 
	\begin{enumerate}
		\item  For any $(x,y)\in\mathcal{D}(R_i, t^{1+\phi(t)})$, $\omega(x,y)\geq s$,
		\item 	$\omega(R_i, R_i^{1+\phi(R_i)})\leq s+ \frac{\varepsilon}{2^i}$,
	\end{enumerate}
	then we have $\dim_{\mathrm L}^{\phi} F =\underline{\dim}_{\mathrm L}^{\phi} F =s$.
\end{claim}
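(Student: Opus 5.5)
The plan is to prove two bounds, $\dim_{\mathrm L}^{\phi}F\le s$ and $\underline{\dim}_{\mathrm L}^{\phi}F\ge s$. Combined with the trivial chain $\underline{\dim}_{\mathrm L}^{\phi}F\le \dim_{\mathrm L}^{\phi}F$ noted in the introduction, they give equality.

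\emph{Upper bound.} This comes from hypothesis (2) together with formula (\ref{phi dim formula}). By (\ref{omega x y}), hypothesis (2) says that for each $i$
\[
\inf_{\xi\in F}\log N_{R_i^{1+\phi(R_i)}}(B(\xi,R_i)\cap F)\le \Big(s+\frac{\varepsilon}{2^i}\Big)\phi(R_i)\log\frac1{R_i},
\]
since $\log(R_i/R_i^{1+\phi(R_i)})=\phi(R_i)\log(1/R_i)$. Evaluating the $\liminf$ in (\ref{phi dim formula}) along $R=R_i\to0$ gives $\dim_{\mathrm L}^{\phi}F\le s$. There is a minor point about whether the infimum is attained: if it is not, we pick near-minimising $\xi_i$ and absorb the loss into the $\varepsilon/2^i$ term.

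\emph{Lower bound.} We need $\underline{\dim}_{\mathrm L}^{\phi}F\ge s$, that is, $N_r(B(x,R)\cap F)\ge C(R/r)^{s}$ (or with exponent $s-\delta$ for every $\delta>0$) for all $x\in F$ and all $0<r\le R^{1+\phi(R)}<R<1$. Given such a pair with $R$ small, choose $i$ with $R\le R_i$. Then $0<R\le R_i$ and $0<r\le R^{1+\phi(R)}$, so $(R,r)\in\mathcal D(R_i,t^{1+\phi(t)})$. Hypothesis (1) then gives $\omega(R,r)\ge s$, which unwinds to $N_r(B(x,R)\cap F)\ge (R/r)^s$ for every $x$. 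For $R$ larger than $R_1$ but bounded away from $1$, the quantities are controlled by constants via the doubling property, so a constant $C$ covers them. Hence $s$ belongs to the admissible set in the definition of $\underline{\dim}_{\mathrm L}^{\phi}F$.

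\emph{Main obstacle.} This is the bookkeeping of what $\mathcal D(R_i,t^{1+\phi(t)})$ means. We read $t$ as the large scale $x$ and $y$ as the small scale, so membership is exactly $y\le x^{1+\phi(x)}$ with $x\le R_i$. We must check this matches the quasi-condition $r\le R^{1+\phi(R)}$ and not the reversed inequality. Because the $R_i$ decrease to $0$, every small $R$ lies below $R_1$, so hypothesis (1) applied with $i=1$ already covers all small scales. With that reading the argument is direct, and the dependence on $\varepsilon$ only enters the upper bound.
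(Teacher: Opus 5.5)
Your proof is correct and is the same unwinding of definitions that the paper compresses into ``directly follows from the definition'': hypothesis (2) along $R=R_i$ in the $\liminf$ formula gives $\dim_{\mathrm L}^{\phi}F\le s$, hypothesis (1) gives $\underline{\dim}_{\mathrm L}^{\phi}F\ge s$, and $\underline{\dim}_{\mathrm L}^{\phi}F\le\dim_{\mathrm L}^{\phi}F$ closes the chain. One step needs tightening. For $R\in(R_1,1)$ the small scale $r$ is still unbounded below, so ``constants via doubling'' alone does not suffice; instead use $N_r(B(x,R)\cap F)\gtrsim N_r(B(x,R_1)\cap F)\ge (R_1/r)^s\ge R_1^s(R/r)^s$ when $r\le R_1^{1+\phi(R_1)}$, and the boundedness of $R/r$ otherwise. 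The near-minimiser remark is unnecessary, since $\omega$ and the $\liminf$ formula use the same infimum over centres.
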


\begin{proof}
This result directly follows from the definition of generalised lower Assouad dimensions.
\end{proof}

The following proposition provides an interpolation between the lower and quasi lower dimensions.
\begin{proposition}\label{THM_interpolation_2}
	Let $s \in (\dim_{\rm L} F, \dim_{\rm qL} F]$, then there exist a dimension function $\phi(R)$ such that $s = \dim_{\rm L}^\phi F = \underline{\dim}_{\rm L}^\phi F$.
\end{proposition}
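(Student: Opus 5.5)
We will construct $\phi$ so that the two hypotheses of Claim \ref{cm3} hold; the claim then gives $\dim_{\rm L}^\phi F=\underline{\dim}_{\rm L}^\phi F=s$. Since $s\le \dim_{\rm qL}F=\lim_{\theta\to1}\dim_{\rm L}^\theta F$, and the quasi-lower dimension is the infimum over constant windows $r\le R^{1/\theta}$, for every $\delta>0$ there is $\theta_\delta>0$ such that $\omega(x,y)\ge s-\delta$ whenever $y\le x^{1+\theta_\delta}$ and $x$ is small. Conversely, since $s>\dim_{\rm L}F$, for every $\eps>0$ there exist arbitrarily small $R$ and ratios $y/R\to0$ with $\omega(R,y)<s$. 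We will connect these two regimes with a continuity argument along a curve $y=t^{1+\phi(t)}$.

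\textbf{Step 1 (local threshold function).} For each small $t$, define
\[
\theta^*(t)=\inf\{\theta>0 \mathdot \omega(x,x^{1+\theta'})\ge s \text{ for all } x\le t \text{ and all } \theta'\ge\theta\}.
\]
We will use the quasi-lower information as follows: after a harmless $\eps$-shift of $s$, the condition $s\le\dim_{\rm qL}F$ forces $\theta^*(t)\log(1/t)$ to be finite. Because $s>\dim_{\rm L}F$, there are bad pairs with $\theta$ small, which forces $\theta^*(t)\log(1/t)\to\infty$ along the relevant scales. The function $\theta^*$ is non-increasing in $t\to0$ by construction. We then set $\theta_i=\theta^*(R_i)$ along a sparse sequence $R_i$, chosen where the infimum is almost attained, so that $\omega(R_i,R_i^{1+\theta_i})\le s+\eps 2^{-i}$.

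\textbf{Step 2 (continuity).} To obtain near-equality rather than merely $<s$ at some point, we will use the doubling property (Lemma \ref{upper bound estimate}, Lemma \ref{lem3}, and the continuity estimate of Proposition \ref{prop 4}). These show that $\theta\mapsto \theta\,\omega(R,R^{1+\theta})\log(1/R)$ changes by at most $O(\Delta\theta\log(1/R))$, and that changing $R$ by a constant factor costs only bounded multiplicative constants. Consequently, at the boundary of the good region the value of $\omega$ is within $\eps2^{-i}$ of $s$. This is the intended source of hypothesis (2) of Claim \ref{cm3}.

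\textbf{Step 3 (building $\phi$).} Apply Lemma \ref{lemma4-2} (or the minimal version in the Remark after it) to the sequence $(R_i,\theta_i)$. After passing to a subsequence, we may assume $\theta_i$ is non-increasing and $\theta_i\log(1/R_i)$ increases to infinity. This yields a dimension function $\phi$ with $\phi(R_i)=\theta_i$. We choose the maximal interpolant, so that $\phi(t)\ge\theta^*(t)$ for $t\in[R_{i+1},R_i]$. Since $\theta^*$ is the threshold, every pair $(x,y)$ with $y\le x^{1+\phi(x)}$ then satisfies $\omega(x,y)\ge s$, which gives hypothesis (1).

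\textbf{Main obstacle.} The hardest step is verifying hypothesis (1) between the nodes $R_i$, i.e.\ that the interpolant from Lemma \ref{lemma4-2} dominates $\theta^*$ on each interval $[R_{i+1},R_i]$ while remaining a genuine dimension function. On the part of each interval where $\phi\log(1/t)$ is constant, $\phi$ might dip below $\theta^*$. We will first try to avoid this by choosing $R_{i+1}$ very sparse and defining $\theta_{i+1}$ as the threshold over all of $(0,R_{i+1}]$. If this does not suffice, we fall back on taking $\phi=\max(\theta^*,\text{interpolant})$ and regularising it with Lemma \ref{lem3} at the cost of an additional $\eps$ error.
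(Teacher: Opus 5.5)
The gap is the one you flag as the main obstacle, and it is structural rather than technical. With nodes chosen only so that $\theta_i=\theta^*(R_i)$ and $\omega(R_i,R_i^{1+\theta_i})\le s+\eps 2^{-i}$, there may be no dimension function at all through them that satisfies hypothesis (1).

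To see this, let $\tau(x)=\inf\{\theta>0 : \omega(x,x^{1+\theta'})\ge s \text{ for all } \theta'\ge\theta\}$ be the single-scale threshold, so that $\theta^*(t)=\sup_{x\le t}\tau(x)$. Hypothesis (1) forces $\phi\ge\tau$, and hence $\phi\ge\theta^*$ because $\phi$ is monotone. On the other hand, monotonicity of $\phi(R)\log(1/R)$ forces $\phi(t)\le\theta_{i+1}\log(1/R_{i+1})/\log(1/t)$ for $t>R_{i+1}$; this is exactly the lower piece of the maximal interpolant of Lemma \ref{lemma4-2}. So you need $\theta^*(t)\log(1/t)\le\theta^*(R_{i+1})\log(1/R_{i+1})$ on $(R_{i+1},R_i)$, and nothing you impose gives this.

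For instance, take a bad pair at scale $t=R_{i+1}^{1/2}$ with exponent $\theta\in(2\theta_{i+1},1)$. Its inner scale $t^{1+\theta}$ stays above $R_{i+1}$, so it puts no constraint on pairs at scale $R_{i+1}$. Yet it gives $\theta^*(t)>2\theta_{i+1}\ge\phi(t)$.

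Your proposed remedies do not help:
\begin{enumerate}
\item Taking $(R_i)$ sparser only enlarges the intervals on which this can happen.
\item ``The threshold over all of $(0,R_{i+1}]$'' is just $\theta^*(R_{i+1})$ again.
\item The function $\max(\theta^*,\Phi)$ violates monotonicity of $\phi(R)\log(1/R)$ precisely at such $t$.
\item Lemma \ref{lem3} only compares $(CR)^{-\phi(CR)}$ with $R^{-\phi(R)}$ and does not regularise anything.
\item Any dimension function above $\max(\theta^*,\Phi)$ has $\phi(R_{i+1})\ge\theta^*(t)\log(1/t)/\log(1/R_{i+1})>\theta_{i+1}$. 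At that larger exponent you have no upper control on $\omega$, so hypothesis (2) is lost at that node.
\end{enumerate}

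The missing idea is a second record condition on the nodes. Set $g(t)=\theta^*(t)\log(1/t)$ and choose $R_i$ so that both $\tau(R_i)\ge(1-o(1))\theta^*(R_i)$ and $g(R_i)\ge(1-o(1))\sup_{t\ge R_i}g(t)$ hold. Such points exist at arbitrarily small scales. Indeed, $g$ is unbounded near $0$ because $s>\lad F$. If $t$ is a right-record of $g$ and $x\le t$ nearly attains $\sup_{x'\le t}\tau(x')$, then $x$ has both properties up to factors $1+o(1)$, which your continuity estimate absorbs.

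With these nodes your Step 3 works as intended. On $[R_i',R_i]$ the maximal interpolant equals $\theta_i\ge\theta^*$ by monotonicity. On $[R_{i+1},R_i']$ it equals $g(R_{i+1})/\log(1/t)\ge g(t)/\log(1/t)=\theta^*(t)$.

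The paper gets the same effect without preselecting nodes, through a greedy construction. From $R_n$ it lowers $\phi$ as fast as allowed, along $y=k_nt$ where $\phi\log(1/t)$ is constant. It does so down to the critical scale $R_n'$ at which the region below the curve would first meet a bad pair, and then freezes $\phi\equiv\theta_n$. Hypothesis (1) then holds by induction, and the near-bad point $R_n^*$ found on $y=t^{1+\theta_n}$ gives (2). Such touching points are automatically double records.

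Finally, if $s=\dim_{\rm qL}F$, your ``harmless $\eps$-shift'' must be a shift to $s-\eps_i$ with $\eps_i\to0$ along the scales. A fixed shift only realises $s-\eps$.
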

\begin{proof}
For any $s\leq\dim_{\rm qL} F$, if there exists $\theta>0$ such that $\dim_{\rm L}^{\theta} F =s$, then let $\phi(R)=\theta$ for sufficiently small $R > 0$, it yields the desired result. 
Otherwise, given $\theta_0=1$, we can find sufficiently small $R_1<1$ such that for any $(x,y)\in\mathcal{D}(R_1, t^{1+\theta_0})$, we have $\omega(x,y)\geq s$.
Let $k_1=R_1^{\theta_0}$ and define 
\begin{align}\label{prime1}
\notag {R^{\prime}_1}=\sup\{x & \mathdot \text{ for any } 0< t <R_1 \text{ and } \\  
& (x,y)\in\mathcal{D}(R_1,k_1t,t^{1+\theta_0})\setminus\mathcal{D}(x,k_1t,t^{1+\frac{\log k_1}{\log x}}),  \omega(x,y)\geq s,\}
\end{align}
let $\theta_1=\frac{\log k_1}{\log {R^{\prime}_1}}$, (\ref{prime1}) implies that, 
\begin{equation}\label{geq1}
	\omega(x,y)\geq s,\quad \text{ for any } (x,y)\in\mathcal{D}(R_1,k_1t,t^{1+\theta_0})\setminus\mathcal{D}(x,k_1t,t^{1+\theta_1}).
\end{equation}
Also we can find $R_1^*\leq {R^{\prime}_1}$ such that
\begin{equation}\label{leq1}
	\omega(R_1^*,{(R_1^*)}^{1+\theta_1})\leq s + \frac{\varepsilon}{2}.
\end{equation}
Denote $R_2=\frac{1}{2} R_1^*$, $k_2=R_2^{\theta_1}$, and define
\[
\phi(R) = 
\left\{
\begin{array}{ll}
	\frac{\log k_1}{\log R}& {R^{\prime}_1}<R\leq R_1; \\
	\theta_1 & R_2<R\leq {R^{\prime}_1}.
\end{array}
\right.
\]
(\ref{geq1}) implies 
\begin{equation}
\omega(x,y)\geq s,\qquad \text{when } R_2<x\leq R_1	\text{ and }   0<y<x^{1+\phi (x)}.
\end{equation}
Recursively, if ${R^{\prime}_n}$, $R_n^*$ and $\theta_n$ have been defined, Let $R_{n+1}=\frac{1}{2}R_n^*$, $k_{n+1}=R_{n+1}^{\theta_n}$, we define
\begin{align}\label{prime}
\notag {R^{\prime}_{n+1}}=\sup\{ x \mathdot &  \text{ for any } 0<t<R_{n+1} \text{ and }\\
& (x,y)\in\mathcal{D}(R_{n+1},k_{n+1}t,t^{1+\theta_n})\setminus\mathcal{D}(x,k_{n+1}t,t^{1+\frac{\log k_n}{\log x}}), w(x,y)\geq s\}.
\end{align}
Let $\theta_{n+1}=\frac{\log k_{n+1}}{\log {R^{\prime}_{n+1}}}$, then there exist  $R_{n+1}^* < R_{n+1}$ satisfying
\[(R_{n+1}^*, {(R_{n+1}^*)}^{1+\theta_{n+1}})\in \mathcal{L}({R^{\prime}_{n+1}} ,t^{1+\theta_{n+1}})\subset\mathcal{D}({R_n,k_n t,t^{1+\theta_{n+1}}}),\]
and 
\[\omega(R_{n+1}^*, {(R_{n+1}^*)}^{1+\theta_{n+1}})\leq s+\frac{\varepsilon}{2^{n+1}}.\]
Thus, for any $0 < R \leq R_1$, we define
\[
\phi(R) = 
\left\{
\begin{array}{ll}
	\frac{\log k_n}{\log R}& {R^{\prime}_{n+1}}<R\leq R_n,  \\
	\theta_{n+1} & R_{n+1}<R\leq {R^{\prime}_{n+1}}.
\end{array}
\right.
\]
Details see Figure \ref{low dim Rphi 2}.

\begin{figure}[H]
\centering
\begin{tikzpicture}[font=\tiny, >=Stealth, scale=0.45]

  \pgfmathsetmacro{\Rn}{18}        
  \pgfmathsetmacro{\m}{0.82}       
  \pgfmathsetmacro{\pOut}{2}       
  \pgfmathsetmacro{\pDas}{1.8}       
  \pgfmathsetmacro{\pMid}{1.5}     

  \pgfmathsetmacro{\rA}{0.40}  
  \pgfmathsetmacro{\rB}{0.80}  
  \pgfmathsetmacro{\rC}{0.90}  
  \pgfmathsetmacro{\rM}{0.7}  

  \pgfmathsetmacro{\xA}{\rA*\Rn}   
  \pgfmathsetmacro{\xB}{\rB*\Rn}   
  \pgfmathsetmacro{\xC}{\rC*\Rn}   
  \pgfmathsetmacro{\Rm}{\rM*\Rn}   
  \pgfmathsetmacro{\yRn}{\m*\Rn}

  \pgfmathsetmacro{\yAonInner}{ \m*\xA*pow(\xA/\xC, \pDas-1) }
  \coordinate (RnplusOneOnInner) at (\xA,\yAonInner); 

  \draw[-Stealth] (0,0) -- (20,0) node[right] {$x$};
  \draw[-Stealth] (0,0) -- (0,14) node[above] {$y$};

  \draw[very thick, loosely dashed, gray] (0,0) -- (\xC,{\m * \xC});

  \draw[thick, ] (\xC,{\m * \xC}) -- (\Rn,\yRn);

  \fill[gray!20] 
    plot[domain=\xA:\Rn, samples=260]
      (\x, { \m*\x*pow(\x/\Rn, \pOut-1) })
    -- 
    plot[domain=\xC:\xA, samples=260] 
      (\x, { \m*\x*pow(\x/\xC, \pDas-1) })
    -- cycle;

  \draw[very thick, loosely dashed, line cap=round, blue!50]
    plot[domain=0:\Rn, samples=260]
      (\x, { \m*\x*pow(\x/\Rn, \pOut-1) });

  \draw[thick, line cap=round]
    plot[domain=\xA:\xC, samples=260]
      (\x, { \m*\x*pow(\x/\xC, \pDas-1) });

  \pgfmathsetmacro{\xBB}{\m*\xB*pow(\xB/\xC, \pDas-1)}   

  \draw[dashed] (\xA,0) -- (\xA,\yAonInner) node[below=4pt] at (\xA+0.6,0) {$R_{n+1}$};
  \draw[dashed] (\xB,0) -- (\xB,\xBB)   node[below=4pt] at (\xB,0) {$R_n^{*}$};
  \draw[dashed] (\xC,0) -- (\xC,{\m*\xC+0.15}) node[below=4pt] at (\xC,0) {$R_n'$};
  \draw[dashed] (\Rn,0) -- (\Rn,{\yRn+0.2}) node[below=4pt] at (\Rn,0) {$R_n$};

  \pgfmathsetmacro{\yDot}{ \m*\xB*pow(\xB/\xC, \pDas-1) }
  \fill  [red] (\xB,\yDot) circle (3pt);

  \pgfmathsetmacro{\Rk}{\xA}             
  \pgfmathsetmacro{\xAone}{\rA*\Rk}      
  \pgfmathsetmacro{\xBone}{\rB*\Rk}      
  \pgfmathsetmacro{\xCone}{\rC*\Rk}      
  \pgfmathsetmacro{\Rmon}{\rM*\Rk}       

  \pgfmathsetmacro{\yEnd}{\yAonInner}
  \pgfmathsetmacro{\sRB}{ \yEnd / (\m*\Rk) }  

  \pgfmathsetmacro{\tMid}{0.5} 
  \pgfmathsetmacro{\xMeet}{ (1-\tMid)*\xAone + \tMid*\xCone }

  \pgfmathsetmacro{\yRedAtXmeet}{ \sRB * \m * \xMeet }

  \fill[gray!20] 
    plot[domain=\xAone:\xA, samples=260]
        (\x, { \m*\x*pow(\x/\Rn, \pOut-1) })  
    -- 
    plot[domain=\xA:\xAone, samples=260] 
  (\x,{ \sRB * \m * \x * pow(\x/\Rk, \pOut-1) })
    -- cycle;

  \draw[thick, black, line cap=round]
    plot[domain=\xCone:\Rk, samples=200]
      (\x, { \sRB * \m * \x });

  \draw[thick, dashed, line cap=round, blue!50]
    plot[domain=0:\Rk, samples=200]
      (\x, { \sRB * \m * \x * pow(\x/\Rk, \pOut-1) });

  \draw[thick, line cap=round]
    plot[domain=\xAone:\xCone, samples=220]
      (\x, { \sRB * \m*\x*pow(\x/\xCone, \pDas-1) });

  \pgfmathsetmacro{\yDDot}{{ \sRB * \m*\xBone*pow(\xBone/\xCone, \pDas-1) }}
  \fill [red] (\xBone,\yDDot) circle (3pt);

  \pgfmathsetmacro{\yAoneInner}{ \m*\xAone*pow(\xAone/\xCone, \pDas-1) }
  \pgfmathsetmacro{\yBone}{\m*\xBone}
  \pgfmathsetmacro{\yCone}{\m*\xCone}
  \pgfmathsetmacro{\yCOone}{\sRB * \m*\xAone*pow(\xAone/\xCone, \pDas-1)}
  \pgfmathsetmacro{\yRedAtXCome}{ \sRB * \m*\xCone*pow(\xCone/\xCone, \pDas-1) }

  \draw[dashed] (\xAone,0) -- (\xAone,\yCOone)    node[below=4pt] at (\xAone,0) {$R_{n+2}$};
  \draw[dashed] (\xBone,0) -- (\xBone,\yDDot)         node[below=4pt] at (\xBone-0.6,0) {$R_{n+1}^{*}$};
  \draw[dashed] (\xCone,0) -- (\xCone,\yRedAtXCome)  node[below=4pt] at (\xCone,0) {$R_{n+1}'$};

  \coordinate (P1) at (\xB,\yDot) ;
  \node (txt) at (\xB -0.1 ,\yDot + 5) {$\omega(R_{n}^*, {(R_{n}^*)}^{1+\theta_{n}})\leq s+\varepsilon_{n}$};
  \draw[->, shorten <=2pt] (P1) -- (txt);

  \coordinate (P2) at (\xBone,\yDDot);
  \node (txt) at (\xBone -0.1,\yDDot + 7) {$\omega(R_{n+1}^*, {(R_{n+1}^*)}^{1+\theta_{n+1}})\leq s+\varepsilon_{n+1}$};
  \draw[->, shorten <=2pt] (P2) -- (txt);

  \fill (RnplusOneOnInner) circle (1.3pt);

  \node[
    draw, rounded corners, fill=white, inner sep=3pt, fill opacity=0.85,
    row sep=1.5pt, column sep=2mm, nodes={anchor=west}, align = left,
  ] at (24,14) {
    \tikz{\draw[very thick, loosely dashed, gray] (0,0)--(8mm,0);} $y=k_n R$ \\
    \tikz{\draw[thick, dashed, gray] (0,0)--(8mm,0);} $y = k_{n+1} R$ \\
    \tikz{\draw[very thick, loosely dashed, blue!50] (0,0)--(8mm,0);} $y = R^{1+\theta_{n-1}}$ \\
    \tikz{\draw[thick, dashed, blue!50] (0,0)--(8mm,0);} $y = R^{1+\theta_{n}}$ \\
    \tikz{\draw[thick] (0,0)--(8mm,0);} $y= R^{1+\phi(R)}$ \\
    \tikz{\fill[gray!20] (0,0) rectangle (0.8, 0.25); \node[anchor=west] at (0.8, 0.125) {\tiny $\mathcal{D}(R_{n}, k_n t,t^{1+\theta_{n-1}})\setminus $};} \\
    \tikz{\filldraw[white] (0, 0) rectangle (0.8, 0.25); \node[anchor=west] at (1.4, 0.125) {\tiny  $\mathcal{D}(R_n^\prime,k_n t,t^{1+\frac{\log k_{n}}{\log R_n^\prime}})$};}
};
\end{tikzpicture}

\caption{Figure of $R^{1+\phi(R)}$ for $R \in [R_{n+2}, R_n]$}
\label{low dim Rphi 2}
\end{figure}
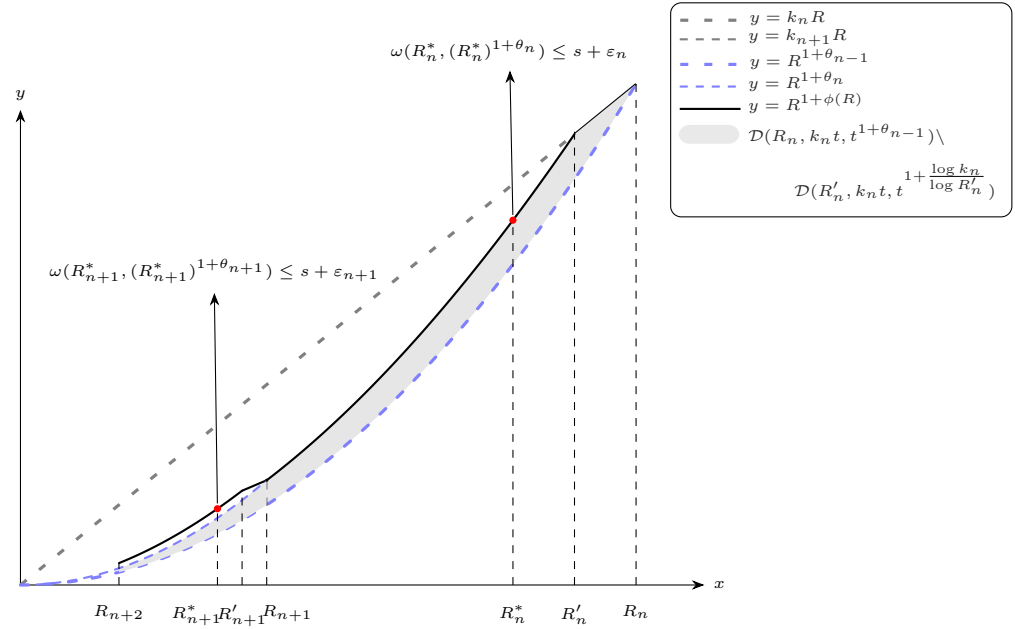

We can easily verify that $\phi(R)$ is a dimension function, and the desired conclusion holds by the definition of $\phi(R)$ and Claim \ref{cm3}.
\end{proof}

\subsection{Proof of Theorem 4: Generalised lower dimensions on the popcorn graphs}

In this part, we discuss the generalised lower-type dimensions of popcorn graphs, and particularly demonstrate the interpolation problem on the generalised lower-type dimension analogues of popcorn graphs. For general discuss on the dimension theory of popcorn graphs and their variations, see \cite{BC2023, C2022, CFY2022, DWW2023} and reference therein.

We first recall the definition of popcorn function and popcorn graphs. 

Given $0 < t < \infty$. Let $f: [0,1] \to [0,1)$ be the {\it popcorn function} where for any $x \in [0,1]$ 
\[
f_t(x) = 
\begin{cases}
\frac{1}{q^t} & x=\frac{p}{q} \text{ where } 1 \leq p \leq q, \, \gcd(p,q) = 1 \\
0 & \text{otherwise}
\end{cases}
\]
We denote
\[
S_{t} = \left\{ (x, f_t(x)) ~:~ x \in (0,1) \cap \mathbb{Q} \right\} \subseteq \RR^2.
\]
be the {\it popcorn graph}.

It follows from \cite{C2022, CFY2022, DWW2023} that
\[
\dim_{\rm B} S_t = 
\begin{cases}
\frac{4}{2+t} & 0 < t < 2; \\
1 & 2 \leq t < \infty,
\end{cases}
\]
Note that $S_t$ contains isolated points, thus for any dimension function $\phi$, it gives $\lad^\phi S_t = 0$. Besides, notice that 
\[
([0,1] \backslash \mathbb{Q}) \times \{0\} \subseteq S_t,
\]
thus by \cite[Section 3.4.2]{JF2020}, for all dimension functions $\phi$, $\dim_{\rm ML}^\phi S_t = 1$, which gives for any $0 < t < 2$,
\[
\lad^\phi S_t < \dim_{\rm ML}^\phi S_t < \dim_{\rm B} S_t.
\]
This also shows that the generalised lower-type dimensions fail to fully interpolate from $\lim_{\theta \to 0} \dim_{\rm L}^\theta F$ to $\dim_{\rm B} F$ for general sets.

\section*{Acknowledgements.} 

The authors would like to thank Dr. Amlan Banaji and Dr. Alex Rutar for their profound friendship, helpful discussions and constant encouragement. H. P. Chen would like to thank Dr. Amlan Banaji for hosting and providing an excellent research atmosphere during his visit at the University of Jyv\"askyl\"a. H. P. Chen was financially supported by NSFC 12401107 and JYU Visiting Fellow Programme 2026. W. Wang was supported by NSFC 12061086.

\newpage

\section*{Appendix: A new proof on recovering lower Assouad dimension}

In this appendix, we give an alternative proof on the recovery of lower Assouad dimension by dimension functions. This alternative proof starts from an equivalent definition of lower Assouad dimension, and then prove the recovery of lower Assouad dimension by giving the direct construction of the corresponding dimension function. We first study the equivalent definition of lower Assouad dimension by dimension function as follows.

\begin{proposition}\label{equiv def}
Let $\Phi(R)$ be a monotonic decreasing dimension function satisfying that $R^{\Phi(R)} \to 0$ as $R \to 0$, then 
\begin{align}\label{equi.def}
\lad F = \sup\{s \geq 0 \mathdot & \exists \, c > 0, \text{ such that }  \forall \,  0 <  R^{1+ \Phi(R)} \leq r < R < 1, \\
& \inf_{x \in F} N_r(B(x,R) \cap F)  \geq c \cdot  (R/r)^s \}. \notag
\end{align}
\end{proposition}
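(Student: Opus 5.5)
The right-hand side of (\ref{equi.def}), call it $D$, only asks for the covering bound when $r$ lies in the window $R^{1+\Phi(R)} \le r < R$. So $D \ge \lad F$ is immediate: any $s$ admissible for $\lad F$ is admissible for $D$ with the same constant. The work is the reverse inequality $\lad F \ge D$. Take $s < D$ with constant $c$. We need a constant $c' > 0$ such that
\[
\inf_{x\in F} N_r(B(x,R)\cap F) \ge c'(R/r)^{s}
\]
for \emph{all} $0<r<R<1$. (If $s$ is close to $D$, it suffices to get this for exponent $s-\eps$ with $\eps$ arbitrary.)

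The plan is a chaining argument in the style of the proof of Proposition \ref{prop4}. Fix $0<r<R$ and split into two cases.

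\textbf{Case $r \ge R^{1+\Phi(R)}$.} This is exactly the hypothesis, so there is nothing to prove.

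\textbf{Case $r < R^{1+\Phi(R)}$.} Build a chain of scales $R = \rho_0 > \rho_1 > \cdots > \rho_N > r$. Each step is $\rho_{i+1} = \rho_i \cdot \lambda$ for a fixed small factor $\lambda \in (0,1)$. We choose $\lambda$ so that every step stays inside the admissible window, i.e. $\rho_i^{1+\Phi(\rho_i)} \le \lambda \rho_i$, which is equivalent to $\lambda \ge \rho_i^{\Phi(\rho_i)}$.

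This is where the assumption $R^{\Phi(R)}\to 0$ enters, together with monotonicity of $\Phi$. One has to be careful about direction here. If $\Phi$ decreases as $R\to 0$, then $\rho^{\Phi(\rho)}$ could be large for small $\rho$, and the hypothesis $R^{\Phi(R)} \to 0$ is what guarantees $\rho^{\Phi(\rho)} \le \lambda$ for all $\rho$ below some threshold $R_0(\lambda)$. For $R \ge R_0$ we can just enlarge the constant, since the range is then compact in scale ratio, or we restrict to small $R$, which is harmless after adjusting $c'$.

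We then apply Lemma \ref{lower bound estimate} along the chain, using radii $\rho_i - \rho_{i+1} \ge \rho_i/2$ and the doubling estimate (\ref{doubling property}) to pass between $M$ and $N$ and between $\rho_i/2$ and $\rho_i$. This gives
\[
M_r(B(x,R)\cap F) \gtrsim \prod_{i=0}^{N-1} \inf_{y} N_{\rho_{i+1}}(B(y,\rho_i/2)\cap F) \ge \bigl(c_1 \lambda^{-s}\bigr)^{N},
\]
where the lower bound on each factor comes from the window hypothesis applied at scale $\rho_i/2$. Lemma \ref{lem3} controls the passage from $\rho_i$ to $\rho_i/2$.

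The last step $\rho_N \to r$ has ratio less than $\lambda^{-1}$ and costs only a bounded constant. Since $N \approx \log(R/r)/\log(1/\lambda)$, we get
\[
(c_1\lambda^{-s})^N \ge (R/r)^{s - \log(1/c_1)/\log(1/\lambda)}.
\]

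\textbf{Main obstacle.} The per-step constant $c_1$, which combines $c$, the doubling constant and the factor from halving radii, compounds over $N$ steps. The fix is to choose $\lambda$ very small, so that $\log(1/c_1)/\log(1/\lambda) < \eps$. This is possible precisely because the window admits arbitrarily small fixed ratios at all sufficiently small scales, which is the content of $R^{\Phi(R)} \to 0$. Once this is done, letting $\eps \to 0$ and then $s \to D$ finishes the proof.
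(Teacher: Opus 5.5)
Your argument is correct, but it organises the chaining differently from the paper.

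\textbf{The paper's route.} The paper steps along the bottom of the window, $R_0=R$, $R_n=R_{n-1}^{1+\Phi(R_{n-1})}$, so each step ratio $R_{n-1}^{-\Phi(R_{n-1})}$ tends to infinity. The per-step loss (the constant $c$ and the shrinkage of the radius to $R_{n-1}-R_n$) is then swallowed by a factor $(R_{n-1}/R_n)^{\varepsilon}$ at small scales; this is what Lemma \ref{lem4} packages. The leftover scale $R_{n(r)+1}<r\le R_{n(r)}$ needs a case split: either $r>R_{n(r)}/2$, or one more application of the window hypothesis with ratio $R_{n(r)}/r$.

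\textbf{Your route.} You use the hypothesis at a single fixed ratio. Since $\rho^{\Phi(\rho)}\to 0$, the ratio $1/(2\lambda)$ lies in the window at all small scales, so each factor in Lemma \ref{lower bound estimate} is at least $c(2\lambda)^{-s}$. The per-step constant $c_1=c2^{-s}$ does not depend on $\lambda$, so taking $\lambda\le (c2^{-s})^{1/\varepsilon}$ absorbs the compounding over $N\approx\log(R/r)/\log(1/\lambda)$ steps. The final step costs only the constant $\lambda^{s}$, because $M_r\ge M_{\rho_N}$ and $\lambda^{-N}\ge\lambda R/r$.

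\textbf{Comparison.} Yours is the more elementary supermultiplicativity argument. It needs neither Lemma \ref{lem4} nor the monotonicity of $\Phi$, only that the windows eventually contain every fixed ratio. It also makes clear why $R^{\Phi(R)}\to 0$ is exactly the hypothesis required. The paper's route uses far fewer, larger steps and relies on the blow-up of the step ratio rather than on a tuned $\lambda$.

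\textbf{One small correction.} For $R\ge R_0$ the ratio $R/r$ is still unbounded, since $r$ is arbitrary. So ``the range is then compact in scale ratio'' is not a valid justification. The correct reduction is $N_r(B(x,R)\cap F)\ge N_r(B(x,R_0/2)\cap F)$. For $r<R_0/2$ this gives $c'(R_0/2)^{s-\varepsilon}(R/r)^{s-\varepsilon}$, using $R<1$. For $r\ge R_0/2$ the ratio $R/r\le 2/R_0$ is bounded and $N_r\ge 1$ suffices.
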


\begin{proof}

We denote RHS to be the right hand side of (\ref{equi.def}). It directly follows from  definition that $\lad F \leq \text{RHS}$.
For the converse, it suffices to prove for any $s < {\rm RHS}$ and sufficiently small $\varepsilon > 0$, and for any $R > 0$ and $0 <  r < R^{1+\Phi(R)}$, 
\[
M_r(B(x,R) \cap F) \geq \left( \frac{R}{r} \right)^{s - \varepsilon}.
\]
Given $s$ and $\varepsilon$, for any sufficiently small $R >0$ and $R^{1+ \Phi(R)} \leq r < R$, it gives
\[
M_r(B(x,R) \cap F) \geq R^{-\Phi(R) (s - \eps)}
\]
and 
$
R^{\Phi(R)} < \varepsilon.
$
Given any $r < R^{1+ \Phi(R)}$, we define $R_0 = R$, and for any $n \geq 1$, 
\[
R_{n} = R_{n-1}^{1 + \Phi(R_{n-1})}.
\]
Hence there exist $n(r)$ such that
\[
R_{n(r) + 1} < r \leq R_{n(r)}.
\]
Then by Lemma \ref{lower bound estimate} and \ref{lem4}, 
if $\frac{R_{n(r)}}{2} < r \leq R_n$, then
\begin{align*}
M_r(B(x,R) \cap F) & \geq M_{R_1}(B(x,R - R_1) \cap F) \cdot \inf_{y_1 \in F} M_{R_2}(B(y_1,R_1 - R_2) \cap F) \cdot \dots \\
& \qquad \qquad \cdot \inf_{y_{n(r)-2} \in F}M_{r}(B(y_{n(r)-2},R_{n(r)-1} - R_{n(r)}) \cap F) \\
& \geq \left(\frac{R}{R_1}\right)^{s-\varepsilon} \cdot \left(\frac{R_1}{R_2}\right)^{s-\varepsilon} \cdot \dots  \cdot \left(\frac{R_{n(r)-1}}{R_{n(r)}}\right)^{s-\varepsilon} \geq \left(\frac{R}{2r}\right)^{s-\varepsilon}.
\end{align*}
Otherwise, for $R_{n(r)+1} < r \leq \frac{R_{n(r)}}{2}$, then by (\ref{equiv def}) and Lemma \ref{lem3}, there exist a constant $c_0 > 0$ such that for any $y \in F$,
\begin{equation}
M_{r}(B(y,R_{n(r)} - r) \cap F) \geq c_0 \left( \frac{R_{n(r)}}{r} \right)^{s-\varepsilon},
\end{equation}
hence
\begin{align*}
M_r(B(x,R) \cap F) & \geq M_{R_1}(B(x,R - R_1) \cap F) \cdot \inf_{y_1 \in F} M_{R_2}(B(y_1 ,R_1 - R_2) \cap F) \cdot \cdots \\
& \qquad \qquad \cdot \inf_{y_{n(r)-1} \in F}M_{r}(B(y_{n(r)-1},R_{n(r)} - r) \cap F) \\
& \geq c_0 \cdot \left(\frac{R}{R_1}\right)^{s-\varepsilon} \cdot \dots  \cdot \left(\frac{R_{n(r)-1}}{R_{n(r)}}\right)^{s-\varepsilon}\cdot \left( \frac{R_{n(r)}}{r} \right)^{s-\varepsilon} \geq \left(\frac{R}{r}\right)^{s-\varepsilon}
\end{align*}
which gives the desired result.
\end{proof}

Based on Proposition \ref{equiv def}, we have a refinement of the lower Assouad dimension under dimension functions. This is a parallel result of \cite[Theorem 2.9]{BRT2023}.

\begin{proposition}\label{equiv def 2}
Let $F$ be a n.b.d. space, and $g: (0,1) \to (0,1)$ be a continuous function satisfying that 
\[
\lim_{R \to 0} g(R)/R = 0.
\]
Then there exist a dimension function $\psi$ with $R^{1+\psi(R)} \geq g(R)$ for all $R \in (0,1)$ such that 
\[
\dim_{\rm L} F  = \dim_{\rm L}^\psi F = \underline{\dim}_{\rm L}^\psi F.
\]
\end{proposition}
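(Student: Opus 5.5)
We want a dimension function $\psi$ with $R^{1+\psi(R)}\ge g(R)$, equivalently $\psi(R)\le \log(R/g(R))/\log(1/R)$, such that $\dim_{\rm L}^\psi F=\underline{\dim}_{\rm L}^\psi F=\dim_{\rm L}F$. The plan mirrors the proof of Theorem \ref{THM_lower_dim}, with the bound $\Phi(R)$ there replaced by the function determined by $g$.

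\textbf{Step 1 (a comparison function).} Set
\[
\Phi_g(R):=\frac{\log(R/g(R))}{\log(1/R)}.
\]
Since $g(R)/R\to0$, we have $\Phi_g(R)\log(1/R)=\log(R/g(R))\to\infty$. However, $\Phi_g$ need not be monotone. So we first pass to the lower envelope
\[
\Phi(R):=\min\Bigl\{1,\ \inf_{R\le t<1}\Phi_g(t)\Bigr\},
\]
and then, if needed, shrink it further so that $\Phi(R)\log(1/R)$ still increases to infinity. For this we use the standard regularisation by a function of the form $\min_{t\ge R}\log(t/g(t))/\log(1/R)$, which is nonincreasing in $t\mapsto$ and has its product with $\log(1/R)$ nondecreasing and divergent. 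Continuity of $g$ makes these envelopes well behaved. This yields a genuine dimension function $\Phi\le\Phi_g$ for small $R$, and on a bounded initial range we adjust by a constant.

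\textbf{Step 2 (witness sequence and minimal function).} Let $s=\dim_{\rm L}F$. Exactly as in the proof of Proposition \ref{low1}, take near-extremal scales $N_{r_n}(B(x_n,R_n)\cap F)\le (R_n/r_n)^{s+\varepsilon 2^{-n-1}}$ with $r_n/R_n\to0$, and write $r_n=R_n^{1+\theta_n}$. Then apply Lemma \ref{lemma4-1}, together with the splitting used in the proof of Theorem \ref{THM_lower_dim}. The three cases are: $\theta_n\le\Phi(R_n)$ along a subsequence; $\theta_n/\Phi(R_n)$ unbounded; or $\theta_n/\Phi(R_n)$ bounded. In each case, using Corollary \ref{coro4} to shrink $\theta$ by a factor $\eta$ as needed, we obtain $(R_n^*,\theta_n^*,x_n^*)$ with $\theta^*_n$ nonincreasing, $\theta_n^*\log(1/R_n^*)\uparrow\infty$, $\theta_n^*\le\Phi(R_n^*)$, and the covering bound with exponent $s+\varepsilon2^{-n}$.

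Let $\psi$ be the minimal dimension function through the points $(R_n^*,\theta_n^*)$, from the Remark after Lemma \ref{lemma4-2}. Minimality gives $\psi\le\Phi\le\Phi_g$, so $R^{1+\psi(R)}\ge g(R)$. Applying Lemma \ref{lemma4-2} to the minimal interpolant (its last display uses only the values at $R_n^*$) gives $\dim_{\rm L}^\psi F\le s+\varepsilon$. Letting $\varepsilon\to0$ requires a diagonal choice: we take $\varepsilon2^{-n}$ along a single sequence, which the construction already encodes, so the bound is $\le s$.

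\textbf{Step 3 (lower bounds).} The trivial chain $\dim_{\rm L}F\le\underline{\dim}_{\rm L}^\psi F\le\dim_{\rm L}^\psi F\le s$ then closes the argument.

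\textbf{Main obstacle.} The delicate point is Step 1 combined with the requirement $\theta_n^*\le\Phi(R_n^*)$ in Step 2. The witnesses for $\dim_{\rm L}$ may have $r_n$ far below $g(R_n)$, that is, $\theta_n$ too large. Corollary \ref{coro4} lets us move to a new scale $R^*\in(R^{1+\theta},R)$ with any smaller $\theta^*<\eta\theta$ while losing only $\varepsilon$ in the exponent. To make this work, I would first try choosing $\theta_n^*:=\tfrac12\Phi(R_n)$, capped by $\eta\theta_n$. Then I must check two things. First, $\Phi(R^*_n)\ge\Phi(R_n)\ge\theta^*_n$ by monotonicity. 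Second, $\theta_n^*\log(1/R_n^*)\to\infty$, which follows from $\Phi(R)\log(1/R)\to\infty$ and $R_n^*<R_n$. Monotonicity of $\theta^*_n$ is then arranged by passing to a subsequence.
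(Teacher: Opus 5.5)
\textbf{Gap at the crux step.} You assert $\Phi(R_n^*)\ge\Phi(R_n)\ge\theta_n^*$ ``by monotonicity'', but this inequality runs the wrong way. A dimension function decreases as $R\to0$, and Corollary \ref{coro4} only places $R_n^*$ somewhere in $(R_n^{1+\theta_n},R_n)$. So $R_n^*<R_n$ gives $\Phi(R_n^*)\le\Phi(R_n)$.

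The only lower bound available comes from the second axiom. Since $\Phi(R)\log(1/R)$ increases as $R\to0$ and $\log(1/R_n^*)<(1+\theta_n)\log(1/R_n)$, you get $\Phi(R_n^*)\ge\Phi(R_n)\log(1/R_n)/\log(1/R_n^*)>\Phi(R_n)/(1+\theta_n)$. This is essentially sharp. For $\Phi(R)=\log\log(1/R)/\log(1/R)$ near $0$, with $\theta_n=3$ and $R_n^*$ close to $R_n^{4}$, one has $\Phi(R_n^*)\approx\frac14\Phi(R_n)$. So your choice $\theta_n^*=\frac12\Phi(R_n)$ can violate $\theta_n^*\le\Phi(R_n^*)$ whenever $\theta_n>1$.

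That inequality is exactly what makes the minimal interpolant satisfy $\psi\le\Phi\le\Phi_g$, i.e.\ $R^{1+\psi(R)}\ge g(R)$. As written, the defining constraint of the statement is therefore not established.

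The repair needs an extra reduction:
\begin{enumerate}
\item First make $\theta_n$ bounded, say $\theta_n\le K$. This is available because the output of Lemma \ref{lemma4-1} is non-increasing.
\item Then take $\theta_n^*<\min\{\Phi(R_n)/(1+K),\ \eta_n\theta_n\}$, so that $\theta_n^*<\Phi(R_n)/(1+\theta_n)<\Phi(R_n^*)$.
\end{enumerate}
Divergence of $\theta_n^*\log(1/R_n^*)\ge\theta_n^*\log(1/R_n)$ then still holds, after passing to a subsequence with $\eta_n\log(R_n/r_n)\to\infty$. Your divergence argument also skips this $\eta_n\theta_n$ branch of the minimum. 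Note that $\eta_n\to0$, because it is tied to the loss $\varepsilon2^{-n-1}$.

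\textbf{Step 1 is also incorrect as written.} The product of $\min_{t\ge R}\log(t/g(t))/\log(1/R)$ with $\log(1/R)$ is $\min_{t\in[R,1)}\log(t/g(t))$. This is non-increasing as $R\to0$ and bounded above by any single value, so it cannot diverge. The inner infimum has to run over small scales: $\inf_{R'\in(R,1)}\inf_{0<t\le R'}\log(t/g(t))/\log(1/R')$. This is the maximal function $\psi_0$ that the paper takes from \cite[Proposition 2.6]{BRT2023}.

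\textbf{Comparison with the paper.} The paper avoids the shrinking step entirely. By Proposition \ref{equiv def}, $\dim_{\rm L}F$ is already computed using only scales $R^{1+\psi_0(R)}\le r<R$. Hence witnesses with $\theta_n\le\psi_0(R_n)$ and $r_n/R_n\to0$ exist from the outset. One then interpolates through them with the minimal-type dimension function, exactly as you do. This route sidesteps both the monotonicity issue and the need for a second application of Corollary \ref{coro4}.
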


\begin{proof}
Let $\phi(R) = (\log g(R)/\log R) - 1$, and it follows from \cite[Proposition 2.6]{BRT2023} that there exist the unique maximal dimension function $\psi_0(R) : (0,1) \to (0,1)$ as
\[
\psi_0(R) = \inf_{R^\prime \in (R,1)} \frac{\inf_{0 < r \leq R^\prime}\phi(r)\log(1/r)}{\log(1/R^\prime)}.
\] 
It follows that $\psi_0(R) \leq \phi(R)$ and $\psi_0(R)$ is monotonic decreasing as $R \to 0$. Then by Proposition \ref{equiv def}, it also gives $\dim_{\rm L} F = \dim_{\rm L}^{\psi_0} F$. Moreover, since $F$ is doubling, then there exist a sequence $\{x_n, R_n, r_n \}_{n=1}^\infty$ satisfying that $r_n \geq R_n^{1+\psi_0(R_n)}$, $r_n /R_n \to 0$ as $n \to \infty$, and 
\begin{equation}\label{equiv def 2 eqn1}
\dim_{\rm L} F = \lim_{n \to \infty} \frac{\log N_{r_n}(B(x_n, R_n) \cap F)}{\log (R_n/r_n)}.
\end{equation}

We turn to introduce the construction of dimension function $\psi$. 
For any $n$, we denote $\theta_n$ by the solution satisfying $r_n = R_n^{1+\theta_n}$. Notice that $r_n \geq R_n^{1+\psi_0(R_n)}$ gives $\theta_n \leq \psi_0(R_n)$. Besides, $r_n/R_n \to 0$ as $n \to \infty$ implies that $\theta_n \log (1/R_n)$ goes to infinity as $n \to \infty$. Since $\psi_0(R)$ decreases as $R \to 0$, thus by passing a subsequence, we assume that $\theta_n$ monotonically converges to some $\theta \in [0,\infty)$. If $\theta > 0$, then let $\psi(R) = \theta$ for all $R \in (0,1)$, then it follows from lemma \ref{lem4} and (\ref{equiv def 2 eqn1}) that $\dim_{\rm L} F  = \dim_{\rm L}^\psi F$ and $\psi(R) \leq \theta_n \leq \psi_0(R)$ for all $R \in (0,1)$.

Otherwise, we may assume that $\theta_n$ decreases to $0$, and we introduce the construction of $\psi$ on $(0,1) $ by induction. Let $\psi(R) = \theta_1$ for $R \in [R_1,1)$, then we define $R_1^\prime < R_1$ satisfying 
\[
\theta_1 \frac{\log R_1}{\log R_1^\prime} = \theta_2,
\]
and we define $\psi(R) = \theta_1 \cdot (\log R_1/\log R)$ on $[R_1^\prime, R_1]$, and $\psi(R) = \theta_2$ on $[R_2, R_1^\prime]$. Thus, by induction, for any $n \geq 1$, we define $R_n^\prime < R_n$ be the solution satisying 
\[
\theta_n \frac{\log R_n}{\log R_n^\prime} = \theta_{n+1},
\]
and for any $R \in (0,1)$, we define 
\begin{equation}
\psi(R) = 
\begin{cases}
\theta_n \cdot (\log R_n/\log R) & R \in [R_n^\prime, R_n]; \\
\theta_{n+1} & R \in [R_{n+1}, R_n^\prime].
\end{cases}
\end{equation}
This directly gives that $\psi(R)$ is monotonic decreasing and $\psi(R)\log (1/R) \to \infty$ as $R\to 0$,and $\psi(R)$ is a dimension function. Moreover, for any $R \in (0,1)$ and any $n \geq $, by the construction of $\psi(R)$, $\theta_{n} \leq \psi_0(R_{n})$ and $\psi_0(R)$ monotonic decreasing as $R \to 0$, then it gives $\psi(R) \leq \psi_0(R)$.

It remains to check 
$
\dim_{\rm L} F  = \dim_{\rm L}^\psi F = \underline{\dim}_{\rm L}^\psi F.
$
Note that 
\begin{align*}
\dim_{\rm L} F  \leq \underline{\dim}_{\rm L}^\psi F  & \leq \dim_{\rm L}^\psi F \\
& = \liminf_{R \to 0} \frac{\log \inf_{x \in F} N_{R^{1+\psi(R)}}(B(x,R) \cap F)}{-\psi(R) \log R} \\
& \leq \liminf_{n \to \infty} \frac{\log \inf_{x \in F} N_{R_n^{1+\psi(R_n)}}(B(x,R_n) \cap F)}{-\psi(R_n) \log R_n} \\
& =  \lim_{n \to \infty} \frac{\log N_{r_n}(B(x_n, R_n) \cap F)}{\log (R_n/r_n)} \quad (\text{ by (\ref{equiv def 2 eqn1})}) \\
& = \dim_{\rm L} F .
\end{align*}
This yields the desired argument.
\end{proof}

We now investigate the recovery of lower Assouad dimension in terms of dimension functions. By (\ref{omega x y}) and (\ref{regions}), the following claim gives an formulation of Proposition \ref{equiv def 2}.


\begin{claim}\label{cm1}
  Let $\phi(x)$ be a monotonic decreasing dimension function satisfying that $x^{\phi(x)} \to 0$ as $x \to 0$. Let $\dim_{\mathrm L} F=s$, then for any $R>0$, $k>0$, and $\varepsilon>0$, there exists $(x,y)\in\mathcal{D}(R,kt,t^{1+\phi(t)})$, such that $\omega(x, y)\leq s+\varepsilon$.
\end{claim}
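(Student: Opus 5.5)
The plan is to argue by contradiction and reduce the claim to Proposition \ref{equiv def}, which says that the lower Assouad dimension is already determined by scale pairs in the restricted window $R^{1+\Phi(R)} \leq r < R$. First unpack the region. By (\ref{regions}), $\mathcal{D}(R,kt,t^{1+\phi(t)})$ is the set of pairs $(x,y)$ with $0<x\leq R$ and $x^{1+\phi(x)} < y \leq kx$. So suppose, for some $R>0$, $k>0$ and $\varepsilon>0$, that $\omega(x,y) > s+\varepsilon$ for every such pair. By (\ref{omega x y}) this means
\[
\inf_{\xi\in F} N_y(B(\xi,x)\cap F) \geq (x/y)^{s+\varepsilon} \quad \text{whenever } 0<x\leq R,\ x^{1+\phi(x)}<y\leq kx .
\]
The goal is to upgrade this to the hypothesis of Proposition \ref{equiv def} with $\Phi=\phi$, exponent $s+\varepsilon$ and some constant $c>0$. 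That proposition then gives $\dim_{\rm L} F \geq s+\varepsilon > s$, contradicting $\dim_{\rm L} F = s$.

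\textbf{Step 1: the excluded ranges.} Four kinds of pairs are not directly covered by the assumption, and each is handled by absorbing it into the constant $c$.
\begin{enumerate}
\item The endpoint $y = x^{1+\phi(x)}$, which is not in the region. Replace $y$ by $2y$ (shrinking $x$ slightly if $2y>kx$). By the doubling property (\ref{doubling property}), $N_y \gtrsim N_{2y}$, and this costs only a multiplicative constant.
\item If $k<1$, pairs with $kx<y<x$. Here $N_y(B(\xi,x)\cap F)\geq 1 \geq k^{s+\varepsilon}(x/y)^{s+\varepsilon}$, again a constant.
\item Large scales $R< x<1$ with $x^{1+\phi(x)}\leq y<x$. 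The dimension function $\phi$ is bounded on $[R,1)$ by some $M$. Hence $x/y \leq x^{-\phi(x)} \leq R^{-M}$ is bounded, so the trivial bound $N_y\geq 1$ yields the inequality with constant $R^{M(s+\varepsilon)}$.
\item Taking $c$ as the minimum of the constants from the previous three items gives the required uniform estimate for all $0<x^{1+\phi(x)}\leq y<x<1$.
\end{enumerate}

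\textbf{Step 2: invoke Proposition \ref{equiv def}.} Its hypotheses on $\phi$ are exactly those of the claim: $\phi$ is a monotone decreasing dimension function with $x^{\phi(x)}\to 0$. So $s+\varepsilon$ belongs to the set whose supremum defines the right-hand side of (\ref{equi.def}), and therefore $\dim_{\rm L}F\geq s+\varepsilon$. This is the contradiction.

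The main point of care is Step 1, specifically item 3 and the endpoint in item 1. The assumption only controls $x\leq R$, while Proposition \ref{equiv def} quantifies over all $0<R<1$. The issue is resolved by the boundedness of $\phi$ together with doubling, which turn these finitely many scale ratios into harmless constants. Everything else is a direct rewriting of $\omega$ in terms of covering numbers.
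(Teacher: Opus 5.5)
Your argument is correct and follows the paper's intended route. The paper gives no separate proof; it presents this claim as a reformulation, via (\ref{omega x y}) and (\ref{regions}), of the appendix's restricted-window characterisation of $\dim_{\rm L}$ (Propositions \ref{equiv def} and \ref{equiv def 2}), and your contrapositive, which absorbs the endpoint, the pairs $kx<y<x$ and the large scales into the constant $c$, supplies exactly the bookkeeping the paper omits.

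Two small repairs are needed. In item 3, $\phi$ need not be bounded on $[R,1)$ (e.g.\ $\phi(x)=1/(1-x)$ satisfies both defining properties of a dimension function), but you still get $x/y\le x^{-\phi(x)}\le R^{-\phi(R)}$ for $x\in[R,1)$ from the monotonicity of $x\mapsto\phi(x)\log(1/x)$. In item 1, shrinking $x$ is unnecessary: if $2x^{1+\phi(x)}>kx$ then $x/y<2/k$, so the trivial bound $N_y\geq 1$ already suffices.
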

Besides, directly follows from the the definition of $\dim_{\rm L}^\phi F$, the following technical claim is instrumental in recovering lower Assouad dimension by using dimension functions.
\begin{claim}\label{cm2}
Let $s  = \dim_{\mathrm L}F $, for any sequence $\{\varepsilon_i\}_{i=1}^{\infty}$ converging to $0$, if there exists a  dimension function $\phi(x)$ and sequence $\{R_i\}_{i=1}^{\infty}$ decreasing to $0$, such that $\omega(R_i, R_i^{1+\phi(R_i)})\leq s+\varepsilon_i$, then $\dim_{\mathrm L}^{\phi} F = \dim_{\mathrm L} F $.\end{claim}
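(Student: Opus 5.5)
The aim is to show $\dim_{\rm L}^\phi F = \dim_{\rm L} F$. One inequality is automatic. For every dimension function, $\dim_{\rm L}F \leq \underline{\dim}_{\rm L}^\phi F \leq \dim_{\rm L}^\phi F$, so it suffices to prove $\dim_{\rm L}^\phi F \leq s$. For this I will use the formula (\ref{phi dim formula}), which expresses $\dim_{\rm L}^\phi F$ as a $\liminf$ over $R \to 0$. Together with the definition (\ref{omega x y}) of $\omega$, it gives
\[
\dim_{\rm L}^\phi F = \liminf_{R\to 0} \omega\bigl(R, R^{1+\phi(R)}\bigr),
\]
because $\log(R/R^{1+\phi(R)}) = -\phi(R)\log R$. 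A $\liminf$ is bounded above by the value along any sequence tending to $0$. Evaluating along $\{R_i\}$ and using the hypothesis gives
\[
\dim_{\rm L}^\phi F \leq \liminf_{i\to\infty} \omega\bigl(R_i, R_i^{1+\phi(R_i)}\bigr) \leq \liminf_{i \to \infty}(s+\varepsilon_i) = s,
\]
since $\varepsilon_i \to 0$.

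The order of the steps is as follows.
\begin{enumerate}
\item Record the lower bound $\dim_{\rm L} F \leq \dim_{\rm L}^\phi F$. It holds because the defining condition of $\dim_{\rm L}$ covers all pairs $r<R$, in particular $r = R^{1+\phi(R)}$.
\item Rewrite $\dim_{\rm L}^\phi F$ through $\omega$ as above.
\item Evaluate along $R_i$ and let $i\to\infty$.
\end{enumerate}
I can additionally note the sandwich $\dim_{\rm L}F\le \underline{\dim}_{\rm L}^\phi F\le \dim_{\rm L}^\phi F \le s$. This gives the quasi version equality for free, in the spirit of Proposition \ref{equiv def 2}.

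The only delicate point is the passage between $N_r$ in the definition and the $\liminf$ formula with $\inf_x$. Concretely, this step converts ``for every $s'<s$ the inequality $\inf_x N \ge C R^{-\phi(R)s'}$ holds'' into a $\liminf$ statement. It uses that $-\phi(R)\log R \to \infty$, which is property (2) of a dimension function, so the constant $C$ disappears in the limit. This has already been recorded as (\ref{phi dim formula}), so I expect no genuine obstacle. The one thing to check is that $R_i \to 0$ ensures $R_i$ eventually lies in the range where the formula's $\liminf$ applies, and this is immediate.
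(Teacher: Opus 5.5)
Your proposal is correct and follows essentially the same route as the paper, which simply asserts that the claim follows from the definition of $\dim_{\rm L}^\phi F$. Your write-up supplies the details: the lower bound $\dim_{\rm L}F\le\dim_{\rm L}^\phi F$ comes from specialising $r=R^{1+\phi(R)}$, and the upper bound comes from reading the $\liminf$ formula (\ref{phi dim formula}) as $\liminf_{R\to0}\omega(R,R^{1+\phi(R)})$ and evaluating along $R_i$, with $\phi(R)\log(1/R)\to\infty$ absorbing the constant $C$.
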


With the help of Claims \ref{cm1} and \ref{cm2}, we state the proof of recovering the lower Assouad dimension by dimension function as follows.
\begin{proposition}\label{THM_interpolation_1}
Let $s = \dim_{\mathrm L} F$, then there exist a dimension function $\phi(x)$ such that
$
	 \dim_{\mathrm L} F = \dim_{\mathrm L}^\phi F=s.
$
\end{proposition}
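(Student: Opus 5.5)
The plan is to build $\phi$ directly as a piecewise function along a rapidly decreasing sequence of scales. Claim \ref{cm1} supplies witnesses of near-minimal complexity at every scale window, and Claim \ref{cm2} then turns these witnesses into the identity $\dim_{\mathrm L}^\phi F = s$. The lower bound $\dim_{\mathrm L}^\phi F \geq \dim_{\mathrm L} F$ holds for every dimension function, so only the upper bound needs work. That upper bound is exactly what Claim \ref{cm2} gives once we have a dimension function $\phi$ and scales $R_i \downarrow 0$ with $\omega(R_i, R_i^{1+\phi(R_i)}) \leq s + \varepsilon_i$, where $\varepsilon_i = 2^{-i}$.

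The construction runs by induction. Fix an auxiliary monotone dimension function $\Phi_0$ with $x^{\Phi_0(x)} \to 0$, for instance $\Phi_0(x) = 1$. Start with $R_1$ small and $k_1 = R_1^{\theta_0}$, where $\theta_0 = 1$.

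At stage $n$ we have $R_n$, a slope $k_n$ and a current exponent $\theta_{n-1}$. Apply Claim \ref{cm1} with $R = R_n$, $k = k_n$ and $\varepsilon = \varepsilon_n$. This yields a point $(x_n, y_n) \in \mathcal{D}(R_n, k_n t, t^{1+\theta_{n-1}})$ with $\omega(x_n, y_n) \leq s + \varepsilon_n$. Write $y_n = x_n^{1+\vartheta_n}$. Membership in the region gives $\vartheta_n \leq \theta_{n-1}$ and $y_n \leq k_n x_n$, so $x_n^{\vartheta_n} \leq k_n$.

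Now define $\phi$ on $[x_n, R_n]$. On an initial stretch take $\phi(R) = \log k_n / \log R$, the curve along which $R^{\phi(R)} = k_n$ is constant. On the remaining stretch take $\phi$ constant equal to $\theta_n := \vartheta_n$. The switch point $R_n'$ is chosen so that the two pieces agree, that is $\log k_n / \log R_n' = \vartheta_n$; the inequality $x_n^{\vartheta_n} \leq k_n$ makes this possible with $x_n \leq R_n' \leq R_n$. This is the same two-piece pattern as in Lemma \ref{lemma4-2} and Proposition \ref{THM_interpolation_2}, and it forces $\phi(x_n) = \vartheta_n$, so $x_n^{1+\phi(x_n)} = y_n$. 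Finally set $R_{n+1} = x_n/2$ and $k_{n+1} = R_{n+1}^{\theta_n}$, and continue.

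On each piece $\phi$ is either constant or of the form $c/\log(1/R)$, so it is non-increasing as $R$ decreases and $\phi(R)\log(1/R)$ is non-decreasing. To make $\phi(R)\log(1/R)$ tend to infinity, impose $k_n \to 0$ by also requiring $k_{n+1} \leq \min(k_n, 2^{-n})$. If necessary, shrink $R_{n+1}$ further before applying Claim \ref{cm1}. Then $R^{\phi(R)} \leq k_n \to 0$ on the $n$-th block.

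The main obstacle is the junctions between blocks. Monotonicity of $\phi$ across the jump at $R_{n+1}$ needs $\theta_n \geq \log k_{n+1} / \log R_{n+1}$, which holds with equality by the choice of $k_{n+1}$. The decreasing-to-zero behaviour needs $\vartheta_{n+1} \leq \theta_n$, which comes from the upper boundary $t^{1+\theta_n}$ of the region used at stage $n+1$. I expect checking that the constant and logarithmic pieces interlock without violating either monotonicity condition to need the most care; if it fails, I will fall back on the maximal-function construction of Lemma \ref{lemma4-2} applied to the sequence $(x_n, \vartheta_n)$. With $\phi$ in hand, Claim \ref{cm2} applied along $R_i = x_i$ gives $\dim_{\mathrm L}^\phi F = s$.
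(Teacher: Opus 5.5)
Your proposal is correct and is essentially the paper's own argument. You apply Claim~\ref{cm1} inductively in the regions $\mathcal{D}(R_n,k_nt,t^{1+\theta_{n-1}})$, build $\phi$ from the piece $\log k_n/\log R$ followed by the constant $\theta_n$ with switch point $R_n'$ given by $(R_n')^{\theta_n}=k_n$, and finish with Claim~\ref{cm2} along the witness scales. Your normalisation $k_{n+1}=R_{n+1}^{\theta_n}$, together with shrinking $R_{n+1}$ to force $k_n\to 0$, makes explicit the check that $\phi(R)\log(1/R)\to\infty$; the paper leaves this as ``easily verified'', but some such choice is actually needed.
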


\begin{proof}
Given  sequence $\{\varepsilon_i\}_{i=1}^{\infty}$ converging to $0$, we first apply Claim $\ref{cm1}$ to construct sequences $\{\theta_i\}_{i=1}^\infty$, $\{(R_i^*,{R_i^*}^{1+\theta_i})\}_{i=1}^\infty$, and the corresponding dimension function $\phi(x)$ satisfying $\omega(R_i^*,{R_i^*}^{1+\theta_i})\leq s+\varepsilon_i$ and $\phi(R_i^*)=\theta_i$. We then finish the proof by Claim $\ref{cm2}$.
	
We first give some notations. For any $0 < R < \infty$, and for any non-negative function $f(t)$, we define 
\begin{align*}
	\mathcal{L}(R,f(t)) & = \bigbrackets{(t,y) \mathdot 0< t\leq R, \,  y = f(t) }.
\end{align*}
Given $0<k<1$ and $0<t\leq R$ , we denote $\theta(k,t) = \frac{\log k }{\log t}$, and it directly follows that
\begin{equation}
	(t,kt)\in\mathcal{L}(R,kt)\cap\mathcal{L}(R,t^{1+\theta(k,t)}).
\end{equation}
In other words, $\mathcal{L}(R,t^{1+\theta(k,t)})$ passes through the point $(t,kt)$.

We turn to construct sequences $\{\theta_i\}_{i=1}^\infty$ and $\{(R_i^*,{R_i^*}^{1+\theta_i})\}_{i=1}^\infty$ by induction, and then the corresponding dimension function $\phi(x)$ by $\{\theta_i\}_{i=1}^\infty$.
Given $R_1=\frac{1}{2}$,  $k_1=\frac{1}{2}$, we define $\theta_1= \frac{\log k_1}{\log R_1}$. It follows from Claim \ref{cm1} that there exists $(R_1^*,y_1)\in \mathcal{D}(R_1,k_1x,x^{1+\theta_0}) $ such that 
	\[\omega(R_1^*,y_1)\leq s+\varepsilon_1.\]
	Notice that 
	\[\mathcal{D}(R_1,k_1 t, t^{1+\theta_1})=\bigcup_{0 < x \leq R_1}\mathcal{L}(x,t^{1+\frac{\log k_1}{\log x}}),\]
where $\frac{\log k_0}{\log x}$ decreases monotonically from $\theta_0$ to $0$ as $x$ goes from $R_0$ to $0$, so there exist $0< \theta_1 <\theta_0$ and $R_1^*$ such that $y_1={(R_1^*)}^{1+\theta_1}$. We also emphasize that 
\[
(R_1^*,{R_1^*}^{1+\theta_1})\in \mathcal{L}(R_1^\prime, t^{1+\theta_1}).
\]
Let $R_2=\frac{1}{2} R_1^*$,  $k_1=R_1^{\theta_1}$, we define
\[
\phi(R) = 
\left\{
\begin{array}{ll}
	\frac{\log k_1}{\log R}& R_1^\prime<R\leq R_1;\\
	\theta_1 & R_2<R\leq R_1^\prime.
\end{array}
\right.
\]

Recursively, if  $R_n^*$, $R_n^\prime$, $\theta_n$ have been defined, then let $R_{n+1}=\frac{1}{2}R_n^*$, $k_{n+1}=R_n^{1+\theta_n}$, It follows from Claim \ref{cm1} that there exists $R_{n+1}^* < R_{n+1}^\prime$ such that
 \[
(R_{n+1}^*, {(R_{n+1}^*)}^{1+\theta_{n+1}})\in \mathcal{L}(R_{n+1}^\prime,t^{1+\theta_{n+1}})\subset\mathcal{D}({R_{n+1},k_{n+1}t,t^{1+\theta_{n+1}}})
\]
and
\[\omega(R_{n+1}^*, {R_{n+1}^*}^{1+\theta_{n+1}})\leq s+\varepsilon_{n+1}\]
where $\theta_{n+1}=\frac{\log k_{n+1}}{\log R_{n+1}^\prime}$. 
As a consequence, for any $0 < R <R_1$, we define
\[
\phi(R) = 
\left\{
\begin{array}{ll}
	\frac{\log k_n}{\log R}& R_{n+1}^\prime<R\leq R_n  \\
	\theta_{n+1} & R_{n+2}<R\leq R_{n+1}^\prime
\end{array}.
\right.
\]
Details of $R^{1+\phi(R)}$ see figure \ref{low dim Rphi}.
\begin{figure}[htbp]
\centering
\begin{tikzpicture}[font=\tiny, >=Stealth, scale=0.45]

  \pgfmathsetmacro{\Rn}{16}        
  \pgfmathsetmacro{\m}{0.82}         
  \pgfmathsetmacro{\pOut}{3}         
  \pgfmathsetmacro{\pDas}{1.5}       

  \pgfmathsetmacro{\xA}{0.4*\Rn}     
  \pgfmathsetmacro{\xB}{0.8*\Rn}     
  \pgfmathsetmacro{\xC}{0.9*\Rn}     
  \pgfmathsetmacro{\yRn}{\m*\Rn}
  \pgfmathsetmacro{\yA}{\m*\xA}
  \pgfmathsetmacro{\yB}{\m*\xB}
  \pgfmathsetmacro{\yC}{\m*\xC}

  \pgfmathsetmacro{\yApp}{\m*\xA*pow(\xA/\xC, \pDas-1)}
  \pgfmathsetmacro{\mp}{\yApp/\xA}         
  \pgfmathsetmacro{\xAp}{0.4*\xA}    
  \pgfmathsetmacro{\xBp}{0.8*\xA}    
  \pgfmathsetmacro{\xCp}{0.9*\xA}    
  \pgfmathsetmacro{\yAp}{\mp*\xAp}
  \pgfmathsetmacro{\yBp}{\mp*\xBp}
  \pgfmathsetmacro{\yCp}{\mp*\xCp}
  \pgfmathsetmacro{\pOutt}{ln(\yApp)/ln(\xA)}

  \draw[-Stealth] (0,0) -- (17,0) node[right] {$x$};
  \draw[-Stealth] (0,0) -- (0,12) node[above] {$y$};

  \draw[gray,very thick, loosely dashed] (0,0) -- (\xC,\yC);         
  \draw[gray,dashed] (0,0) -- (\xA,\yApp);         
  \draw[thick]      (\xC,\yC) -- (\Rn,\yRn);    
  \draw[thick]      (\xCp,\yCp) -- (\xA,\yApp);

  \draw[blue!50, very thick, loosely dashed, line cap=round]
    plot[domain=0:\Rn, samples=240]
      (\x, { \m*\x*pow(\x/\Rn, \pOut-1) });
  \draw[blue!50, dashed, line cap=round]
    plot[domain=0:\xA, samples=240]
      (\x, { \mp*\x*pow(\x/\xA, \pOutt) });

  \draw[thick, line cap=round]
    plot[domain=\xA:\xC, samples=240]
      (\x, { \m*\x*pow(\x/\xC, \pDas-1) });

  \draw[thick, line cap=round]
    plot[domain=\xAp:\xCp, samples=240]
      (\x, { \mp*\x*pow(\x/\xCp, \pDas-1) });

  \draw[dashed] (\xA,0) -- (\xA,\yApp) node[below=4pt] at (\xA + 0.8,0) {$R_{n+1}$};
  \draw[dashed] (\xB,0) -- (\xB,\yB) node[below=4pt] at (\xB,0) {$R_n^{*}$};
  \draw[dashed] (\xC,0) -- (\xC,{\yC}) node[below=4pt] at (\xC,0) {$R_n'$};
  \draw[dashed] (\Rn,0) -- (\Rn,{\yRn}) node[below=4pt] at (\Rn,0) {$R_n$};

  \draw[dashed] (\xAp,0) -- (\xAp,\yAp-0.4) node[below=4pt] at (\xAp ,0) {$R_{n+2}$};
  \draw[dashed] (\xBp,0) -- (\xBp,\yBp) node[below=4pt] at (\xBp -0.8, 0) {$R_{n+1}^{*}$};
  \draw[dashed] (\xCp,0) -- (\xCp,{\yCp}) node[below=4pt] at (\xCp , 0) {$R_{n+1}'$};

  \pgfmathsetmacro{\yDot}{  \m*\xB *pow(\xB /\xC , \pDas-1) }
  \pgfmathsetmacro{\yDotp}{ \mp*\xBp*pow(\xBp/\xCp, \pDas-1) }
  \fill  [red] (\xB ,\yDot ) circle (3pt);
  \fill [red] (\xBp,\yDotp) circle (3pt);

  \coordinate (P1) at (\xB ,\yDot );
  \node (txt) at (\xB -0.1 ,\yDot +4 ) {$\omega(R_{n}^*, {(R_{n}^*)}^{1+\theta_{n}})\leq s+\varepsilon_{n}$};
  \draw[->, shorten <=2pt] (P1) -- (txt);

  \coordinate (P2) at (\xBp,\yDotp);
  \node (txt) at (\xBp -0.1,\yDotp + 6) {$\omega(R_{n+1}^*, {(R_{n+1}^*)}^{1+\theta_{n+1}})\leq s+\varepsilon_{n+1}$};
  \draw[->, shorten <=2pt] (P2) -- (txt);

\node[
  draw, fill opacity=0.85, rounded corners,
  inner sep=6pt, row sep=6pt, column sep=6pt,
  anchor=north east,  align=left
] at (24,15) {
  \tikz{\draw[gray, very thick, loosely dashed, line width=0.9pt]  (0,0) -- (8mm,0);}  $y=k_n x$ \\
  \tikz{\draw[gray, dashed, line width=0.9pt]  (0,0) -- (8mm,0);}  $y=k_{n+1} x$ \\
  \tikz{\draw[blue!50, very thick, loosely dashed, line width=0.9pt] (0,0) -- (8mm,0);}  $y=x^{1+\theta_{n-1}}$ \\
  \tikz{\draw[blue!50, dashed, line width=0.9pt] (0,0) -- (8mm,0);}  $y=x^{1+\theta_{n}}$  \\ \tikz{\draw[black, thick, line width=0.9pt] (0,0) -- (8mm,0);}  $y=R^{1+\phi(R)}$ 
};

\end{tikzpicture}
    \caption{Figure of $R^{1+\phi(R)}$ for $R \in [R_{n+2}, R_n]$.}
    \label{low dim Rphi}
\end{figure}

We can easily verify that $\phi(R)$ is a dimension function,
from the definition of $\phi(R)$ and Claim $\ref{cm2}$, we obtain the desired conclusion. 

\end{proof}


\begin{thebibliography}{10}

\bibitem{B2023} Banaji A. {\it Generalised Intermediate dimensions.} Monatsh Math., 202 (2023), 465-506.

\bibitem{BC2023} Banaji A., Chen H.-P., {\it Dimensions of popcorn-like pyramid sets}, J. Fractal Geom. no. 1, 10 (2023), 151-168.

\bibitem{BR2022} Banaji A., Rutar A., {\it Attainable forms of intermediate dimensions.} Ann. Fenn. Math., no.2, 47 (2022), 939-960.

\bibitem{BRT2023} Banaji A., Rutar A., Troscheit S., {\it Interpolating with Generalized Assouad dimensions.} Geom. Anal., 35 (2025), 270.

\bibitem{C2022} Chen H.-P., {\it Dimensions and spectra of the t-popcorn graphs}, J. Math. Anal. Appl., no.1, 510 (2022), 126013.

\bibitem{CDW2017} Chen. H.-P., Du Y.-L., Wei C., {\it Quasi-lower dimension and quasi-Lipschitz mapping.} Fractals. no.3, 25 (2017), 1750034.

\bibitem{CFY2022} Chen H.-P., Fraser J. M., Yu H., {\it Dimensions of the popcorn graph}, Proc. Amer. Math. Soc, no. 11, 150 (2022), 4729-4742.

\bibitem{CWC2020} Chen. H.-P., Wu M., Chang Y.-Y. {\it Lower Assouad-type dimensions of uniformly perfect sets in doubling metric spaces.} Fractals., no. 2. 28 (2020), 2050039.

\bibitem{DWW2023} Du Y.-L., Wei C., Wen S. {\it dimensions of popcorn subsets}, J. Math. Anal. Appl., no. 2, 524 (2023), 127088.

\bibitem{KF2014} Falconer K. J., Fractal Geometry: Mathematical Foundations and Applications, Wiley \& sons, New York (2014).

\bibitem{JF2020} Fraser J. M., Assouad Dimension and Fractal Geometry. Cambridge University Press, Cambridge (2020).

\bibitem{FY2018} Fraser J. M., Yu H., {\it New dimension spectra: Finer information on scaling and homogeneity}, Adv. Math., 329 (2018), 273-328.

\bibitem{FHKY2019} Fraser J. M., Howroyd J. D., K\"aenm\"aki A., Yu H., {\it On the Hausdorff dimension of microsets}, Proc. Amer. Math. Soc., no. 11, 147 (2019), 4921-4936.

\bibitem{GHM2021} Garcia I., Hare K., Mendivil F., {\it Intermediate Assouad-like dimensions.} J. Fractal Geom. 8 (2021), 201-245.

\bibitem{HT2019} Hare K., Troscheit S., {\it Lower Assouad Dimension of Measures and Regularity.} Math. Proc. Camb. Philos. Soc. no.2, 170 (2021), 379-415.

\bibitem{L1967} Larman D.G., {\it A New Theory of Dimension}, P. Lond. Math. Soc., no.1, s3-17 (1967),  178-192.

\bibitem{R2024} Rutar A., {\it Attainable forms of Assouad spectra.}, Indiana Univ. Math. J., 73 (2024), 1331-1356.
\end{thebibliography}
\end{document}